\newcommand{\E}{\mathbb{E}}
\newcommand{\R}{\mathbb{R}} 
\newcommand{\C}{\mathbb{C}}
\newcommand{\eps}{\varepsilon}
\newcommand{\tr}{\text{Tr}}
\newcommand{\sign}{\text{sign}}
\newcommand{\diag}{\text{diag}}
\newcommand{\ones}{\bm{1}}
\newcommand{\bzero}{\boldsymbol 0}
\newcommand{\op}{{\rm op}}
\newtheorem{corollary}{Corollary}
\newtheorem*{corollary*}{Corollary}
\newtheorem{theorem}{Theorem}
\newtheorem*{theorem*}{Theorem}
\newtheorem{lemma}{Lemma}
\newtheorem{definition}{Definition}
\newtheorem*{definition*}{Definition}
\newcommand{\SDP}{{\textup{\rm SDP}}}
\def\proj{\mathrm P}
\def\ddiag{\text{ddiag}}
\def\group{{\mathcal G}}
\def\mZt{{\mathbb Z_2}}
\def\SO{{\rm SO}}
\def\sT{{\mathsf T}}
\def\tu{{\tilde u}}
\def\tsigma{{\tilde \sigma}}
\def\tD{{\tilde D}}
\def\tLambda{{\tilde \Lambda}}
\def\tgrad{{\textup{grad}}}
\def\thess{{\textup{Hess}}}
\def\cO{{\mathcal O}}
\def\rank{{\rm rank}}
\def\func{{f}}
\def\reals{{\mathbb R}}
\def\integers{{\mathbb Z}}
\def\<{\langle}
\def\>{\rangle}
\def\MaxCut{{\rm MaxCut}}
\def\id{{\mathbf I}}
\def\normal{{\sf N}}
\def\GOE{{\rm GOE}}
\def\hu{\hat{u}}
\def\sML{\mbox{\tiny\rm ML}}
\def\Crit{{\rm Cr}}
\def\cM{{\mathcal M}}
\def\RGD{{\textup{Rg}}}
\def\vB{{\mathbf{B}}}
\def\vc{{\mathbf{c}}}
\def\tr{{\textup{Tr}}}
\def\barX{\bar{X}}
\def\oA{\overline{A}}
\def\prob{{\mathbb P}}
\begin{document}

\title{Solving SDPs for synchronization and MaxCut problems \\ via the Grothendieck inequality}

\author{Song~Mei\footnote{Institute for Computational and Mathematical Engineering, Stanford University}
\and
Theodor~Misiakiewicz\footnote{Departement de Physique, Ecole Normale Sup\'erieure}
\and
Andrea~Montanari\footnote{Department of Electrical
Engineering and Department of Statistics, Stanford University}
\and
Roberto~I.~Oliveira\footnote{
Instituto Nacional de Matem\'atica Pura e Aplicada (IMPA)}
}

\maketitle

\begin{abstract}

A number of statistical estimation problems can be addressed by semidefinite programs (SDP). While SDPs are solvable in polynomial time using interior point methods, 
in practice generic SDP solvers do not scale well to high-dimensional problems. In order to cope with this problem, Burer and Monteiro proposed a non-convex rank-constrained
formulation, which has good performance in practice but is still poorly understood theoretically.  

In this paper we study the rank-constrained version of SDPs  arising in MaxCut and in synchronization problems. We establish a
Grothendieck-type inequality that proves that all the local maxima and  dangerous saddle points are within a small multiplicative 
gap from the global maximum. We use this structural information to prove that SDPs can be solved within a known accuracy, by applying the Riemannian trust-region 
method to this non-convex problem, while constraining the rank to be  of order one. 
For the MaxCut problem, our inequality implies that any local maximizer of the rank-constrained SDP provides a $ (1 - 1/(k-1)) \times 0.878$ approximation 
of the MaxCut, when the rank is fixed to  $k$. 

We then apply our results to data matrices generated according to the Gaussian $\integers_2$ synchronization problem,
and the two-groups stochastic block model with large bounded degree.
We prove that the error achieved by local maximizers undergoes a phase transition at the same threshold as for information-theoretically optimal methods.
\end{abstract}

\section{Introduction}

A successful approach to statistical estimation and statistical learning suggests to estimate the object of
interest by solving an optimization problem, for instance motivated by maximum likelihood, or empirical risk minimization. 
In modern applications, the unknown object is often combinatorial, e.g. a sparse vector in high-dimensional regression
or a partition in clustering. In these cases, the resulting optimization problem is computationally intractable and convex relaxations
have been a method of choice for obtaining tractable and yet statistically efficient estimators.

In this paper  we consider the following specific semidefinite program
\begin{equation}\label{SDP}\tag{MC-SDP}
\begin{aligned}
&\text{maximize} & & \langle A, X \rangle\\
&\text{subject to} & & X_{ii} = 1, \quad i \in [n], \\
&&& X \succeq 0\, ,
\end{aligned}
\end{equation}
as well as some of its generalizations. This SDP famously arises as a convex relaxation of the MaxCut problem\footnote{In the MaxCut problem,
we are given a graph $G=(V,E)$ and want to partition the vertices in two sets as to maximize the number of edges across the partition.}, whereby the matrix $A$ 
is the opposite of the adjacency matrix of the graph to be cut. In a seminal paper,  Goemans and Williamson \cite{goemans1995improved}
proved that this SDP provides a $0.878$ approximation of the combinatorial problem. 
Under the unique games conjecture, this approximation factor is optimal for polynomial time algorithms \cite{khot2007optimal}. 

More recently, SDPs of this form (see below for generalizations) have been studied in the context of group synchronization and community detection problems. An incomplete
list of references includes \cite{singer2011angular, singer2011three,
  bandeira2014multireference, guedon2016community, montanari2016semidefinite, javanmard2016phase, hajek2016achieving, abbe2016exact}. 
In community detection, we try to partition the vertices of a graph into tightly connected communities under a statistical model for the edges. 
Synchronization aims at estimating $n$ elements $g_1,\ldots, g_n$ in a group $\group$, from the pairwise noisy measurement of the group differences
 $g_i^{-1}g_j$.  Examples include $\mZt$ synchronization in which $\group = \mZt=(\{+1,-1\},\;\cdot\;)$ (the group with elements $\{+1,-1\}$ and usual multiplication), 
angular synchronization in which  
$\group = U(1)$ (the multiplicative group of complex numbers of modulo one), and $\SO(d)$ synchronization in which we need to estimate $n$ rotations $R_1, \ldots, R_n \in \SO(d)$ from the special orthogonal group. In this paper, we will focus on $\mZt$ synchronization and $\SO(d)$ synchronization. 

Although SDPs can be solved  to arbitrary precision in polynomial time \cite{nesterov2013introductory}, generic solvers do not
scale well to large instances. In order to address the scalability problem,
\cite{burer2003nonlinear} proposed to reduce the problem dimensions by imposing the rank constraint $\rank(X)\le k$. This constraint can be 
solved by setting $X = \sigma \sigma^\sT$ where $\sigma \in \R^{n \times k}$. In the case of (\ref{SDP}),
we obtain the following non-convex problem, with decision variable $\sigma$:
\begin{equation}\label{NcvxSDP}\tag{$k$-Ncvx-MC-SDP}
\begin{aligned}
&\text{maximize} & & \langle \sigma, A \sigma \rangle\\
&\text{subject to} & & \sigma = [\sigma_1, \ldots, \sigma_n]^\sT \in \mathbb R^{n \times k},\\
&& & \Vert \sigma_{i} \Vert_2 = 1, \quad i \in [n]. \\
\end{aligned}
\end{equation}
Provided that $k\geq \sqrt{2n}$, the solution of (\ref{SDP}) corresponds to the global maximum of (\ref{NcvxSDP}) \cite{barvinok1995problems,pataki1998rank,burer2003nonlinear}. 
Recently,  \cite{boumal2016non} proved that, as long as $k \geq \sqrt{2n}$, for almost all matrices $A$,
the problem  (\ref{NcvxSDP}) has a unique local maximum which is also the global maximum. This paper proposed to use the Riemannian trust-region 
method to solve the non-convex SDP problem, and provided computational complexity guarantees on the resulting algorithm.

While the theory of \cite{boumal2016non} suggests the choice $k = \cO(\sqrt n)$, it has been observed empirically that setting
$k=\cO(1)$ yields excellent solutions and scales well to large scale applications \cite{javanmard2016phase}.
 In order to explain this phenomenon, \cite{bandeira2016low} considered the $\mZt$ synchronization problem
with  $k=2$, and established theoretical guarantees for the local maxima, provided the noise level is small enough.
A different point of view was taken in a recent  unpublished technical note \cite{montanari2016grothendieck}, which proposed a Grothendieck-type
 inequality for the local maxima of (\ref{NcvxSDP}). In this paper we continue and develop the preliminary work in \cite{montanari2016grothendieck},
to obtain explicit computational guarantees for the non-convex approach with rank constraint $k=\cO(1)$.

As mentioned above, we extend our analysis beyond the MaxCut type problem (\ref{NcvxSDP}) to treat an optimization
problem  motivated by  $\SO(d)$ synchronization.
$\SO(d)$ synchronization (with $d=3$) has applications to computer vision \cite{arie2012global} and cryo-electron microscopy (cryo-EM) \cite{singer2011three}. 
A natural SDP relaxation of the maximum likelihood estimator is given by the problem
\begin{equation}\label{OC-SDP}\tag{OC-SDP}
\begin{aligned}
&\text{maximize} & & \langle A, X \rangle\\
&\text{subject to} & & X_{ii} = \id_d, \quad i \in [m], \\
& & & X \succeq 0,\\
\end{aligned}
\end{equation}
with decision variable $X$.
Here $A,X\in\reals^{n\times n}$, $n=md$ are matrices with $d\times d$ blocks denoted by $(A_{ij})_{1\le i,j\le m}$, $(X_{ij})_{1\le i,j\le m}$. 
This semidefinite program is also known as Orthogonal-Cut SDP. In the context of $\SO(d)$ synchronization, $A_{ij} \in \R^{d \times d}$ is a noisy measurement 
of the pairwise group differences $R_i^{-1} R_j$ where $R_i \in \SO (d)$. 

By imposing the rank constraint $\rank(X)\le k$, we obtain a non-convex analogue of  (\ref{OC-SDP}), namely:
\begin{equation}\label{Ncvx-OC-SDP}\tag{$k$-Ncvx-OC-SDP}
\begin{aligned}
&\text{maximize} & & \langle \sigma, A\sigma \rangle\\
&\text{subject to} & & \sigma = [\sigma_1, \ldots, \sigma_m]^{\sT} \in \R^{n\times k},\\
& & & \sigma_i^\sT \sigma_i = \id_d, \quad i \in [m].
\end{aligned}
\end{equation}
Here the decision variables are matrices $\sigma_i\in\reals^{k\times d}$.

According to the result in \cite{burer2003nonlinear}, as long as $k \geq (d+1) \sqrt{m}$, the global maximum of the problem (\ref{Ncvx-OC-SDP}) coincides
 with the maximum of the problem (\ref{OC-SDP}). As proved in \cite{boumal2016non}, with the same value of $k$ for almost all
matrices $A$, the non-convex problem  has no local maximum other than the global maximum. \cite{boumal2015Riemannian} 
proposed to choose the rank $k$ adaptively: as $k$ is not large enough, increase $k$ to find a better solution.
 However, none of these works considers $k=\cO(1)$, which is the focus of the present paper (under the assumption that  $d$ is of order one as well).

\subsection{Our contributions}

A main result of our paper is a Grothendieck-type inequality that generalizes and strengthens the preliminary  technical result of \cite{montanari2016grothendieck}.
Namely, we prove that for  any $\eps$-approximate concave point $\sigma$ of the rank-$k$ non-convex SDP (\ref{NcvxSDP}), we have
\begin{align}
\SDP (A) \geq \func( \sigma) \geq \SDP(A) - \frac{1}{k-1} (\SDP(A) + \SDP(-A)) - \frac{n}{2}\eps\, ,
\end{align}
where $\SDP(A)$ denotes the maximum value of the problem (\ref{SDP}) and $\func(\sigma)$ is the objective function in  (\ref{NcvxSDP}). 
An $\eps$-approximate concave point is a point at which the eigenvalues of the Hessian of $\func(\,\cdot\,)$ are upper bounded by $\eps$ (see below for formal definitions).

Surprisingly, this result connects a second order local property, namely the highest local curvature of the cost function, to its global position. 
In particular, all the local maxima (corresponding to $\eps=0$) are within a $1/k$-gap of the SDP value. 
Namely, for any local maximizer $\sigma^*$, we have
\begin{align}
\func( \sigma^*) \geq \SDP(A) - \frac{1}{k-1} (\SDP(A) + \SDP(-A)) \, .
\end{align}
All the points outside this gap, with an $n\eps/2$-margin have a direction of positive curvature of at least size $\eps$.

Figure \ref{fig:landscape} illustrates the landscape of the rank-$k$ non-convex MaxCut SDP problem (\ref{NcvxSDP}). We show that this structure implies 
global convergence rates for approximately solving (\ref{NcvxSDP}). We study the Riemannian trust-region method in Theorem \ref{thm:trustregion}. 
In particular, we show that this algorithm with any initialization returns a $0.878\times (1-O(1/k))$ approximation of the MaxCut of a random $d$-regular graph in 
$\mathcal{O}(n k^2)$ iterations, cf. Theorem \ref{thm:MaxCut}.

\begin{figure}[ht]
\centering
\includegraphics[width=0.8\textwidth]{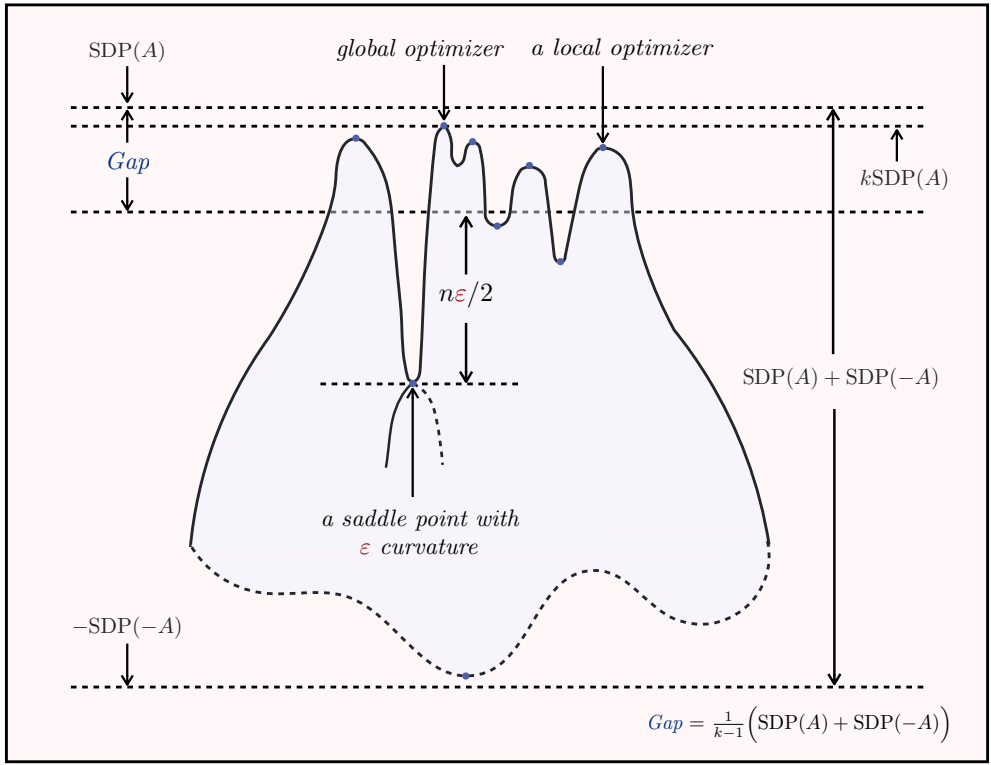}
\caption{The landscape of rank-$k$ non-convex SDP}\label{fig:landscape}
\end{figure}

In the case of $\mZt$ synchronization, we show that for any signal-to-noise  ratio $\lambda >1$, all the local maxima of the rank-$k$ non-convex SDP correlate non-trivially with the 
ground truth  when $k\ge k^*(\lambda) =\cO(1)$ (Theorem \ref{thm:Z2sync1}). Furthermore, Theorem \ref{thm:correlationlimit} provides a lower bound on the 
correlation between local maxima and the ground truth that converges to one when $\lambda$ goes to infinity. 
These results improve over the earlier ones of \cite{bandeira2016low,boumal2016non},
by establishing the tight phase transition location, and the correct qualitative behavior. We extend these
results to the two-groups symmetric Stochastic Block Model. 

For $\SO(d)$ synchronization, we consider the problem (\ref{Ncvx-OC-SDP}) and generalize our main Grothendieck-type inequality
to this case, cf. Theorem \ref{thm:SOd_grothendieck}. Namely, for any $\eps$-approximate concave point $\sigma$ of the rank-$k$ non-convex Orthogonal-Cut SDP (\ref{Ncvx-OC-SDP}), we have
\begin{align}
\func( \sigma) \geq \SDP_o(A) - \frac{1}{k_d-1} (\SDP_o(A) + \SDP_o(-A)) - \frac{n}{2}\eps,
\end{align}
where $k_d = 2k/(d+1)$, $\SDP_o(A)$ denotes the maximum value of the problem (\ref{OC-SDP}) and $\func(\sigma)$ is the objective function in  (\ref{Ncvx-OC-SDP}). We expect that the statistical analysis of local maxima, as well as the analysis of optimization algorithms, should extend to this case as well, but we leave this to future work.

\subsection{Notations}\label{sec:notations}

Given a  matrix $A = (A_{ij}) \in \R^{m \times n}$, we write $\Vert A \Vert_1 = \max_{1 \leq j \leq n} \sum_{i=1}^m \vert A_{ij} \vert$ for
its operator $\ell_1$-norm, $\Vert A \Vert_{\op}$ for its operator $\ell_2$-norm (largest singular value), and $\Vert A \Vert_F = (\sum_{i=1}^m \sum_{j= 1}^n A_{ij}^2)^{1/2}$
 for its Frobenius norm. For two matrices $A, B \in \R^{m \times n}$, we write $\langle A, B \rangle = \tr (A^\sT B)$ for the inner product associated to the Frobenius norm $\langle A , A \rangle = \Vert A \Vert_F^2$. In particular for two vectors $u,v \in \R^n$, $\langle u , v \rangle$ corresponds to the inner product of the vectors $u$ and $v$ associated to the Euclidean norm on $\R^n$. 
We denote by $\ddiag(B)$ the matrix obtained from $B$ by setting to zero all the entries outside the diagonal.

Given a real symmetric matrix $A \in \mathbb R^{n \times n}$, we write $\SDP(A)$ for value of the SDP problem (\ref{SDP}). That is,  
\begin{align}
\SDP(A) = \max\{ \langle A, X\rangle : X \succeq 0, X_{ii} = 1, i \in [n] \}.
\end{align}
Optimization is performed over the convex set of positive-semidefinite matrices with diagonal entries equal to one, also known as the \emph{elliptope}. We write $\RGD (A) = \SDP (A) + \SDP (-A)$ for the length of the range of the SDP with data $A$ (noticing that for every matrix $X$ in the elliptope, 
we have $\SDP(A) \geq \langle A, X \rangle \geq - \SDP(-A)$). 

For the rank-$k$ non-convex SDP problem (\ref{NcvxSDP}), we define the manifold $\cM_k$ as
\begin{align}
\cM_k = \{\sigma \in \mathbb R^{n\times k}: \sigma = (\sigma_1, \sigma_2, \ldots, \sigma_n )^\sT, \Vert \sigma_i \Vert_2 = 1\} \cong \underbrace{\mathbb{S}^{k-1}\times \mathbb{S}^{k-1} \times \ldots \times \mathbb{S}^{k-1}}_{n \text{ times}}.\label{eq:Mkdef}
\end{align}
where $\mathbb{S}^{k-1} \equiv \lbrace x \in \R^k : \Vert x \Vert_2 = 1  \rbrace$ is the unit sphere in $\R^k$. Given a real symmetric matrix $A \in \mathbb R^{n \times n}$, for $\sigma \in \cM_k$, we write $\func( \sigma) = \langle \sigma, A \sigma \rangle$ the objective function of the rank-$k$ non-convex SDP (\ref{NcvxSDP}).  
Our optimization algorithm makes use of the Riemannian gradient and the Hessian of the function $f$. We anticipate their formulas here,
deferring to Section \ref{sec:geometry} for further details. Defining $\Lambda = \ddiag \big(A\sigma\sigma^\sT \big)$, the gradient is given by:
\begin{align}
\tgrad \func( \sigma) = 2 \big(A - \Lambda\big) \, .
\end{align}
The Hessian is uniquely defined by the following holding for all $u,v$ in the tangent space $T_{\sigma} \mathcal{M}_k$:
\begin{align}
\langle v, \thess \func( \sigma)[u] \rangle = 2 \langle v, (A - \Lambda) u \rangle,
\end{align}

\section{Main results}

First we define the notion of approximate concave point of a function $f$ on a manifold $\cM$.
\begin{definition}[Approximate concave point]
Let $f$ be a twice differentiable function on a Riemannian manifold $\cM$. We say $\sigma \in \cM$ is an $\eps$-approximate concave point of $f$ on $\cM$, if $\sigma$ satisfies
\[
\langle u, \thess f(\sigma)[u] \rangle \leq \eps \langle u, u\rangle, \quad \forall u\in T_{\sigma} \cM,
\]
where $\thess f(\sigma)$ denotes the Riemannian (intrinsic) Hessian of $f$ at point $\sigma$,  $T_{\sigma} \cM$ is the tangent space,
and  $\<\,\cdot\,,\cdot\,\>$ is the scalar product on $T_{\sigma} \cM$.
\end{definition}
Note that an approximate concave point may not be a stationary point, or may not even be an approximate stationary point. Both local maximizers
and saddles with largest eigenvalue of the Hessian close to zero are approximate concave points.

The classical Grothendieck inequality relates the global maximum of a non-convex optimization problem to the maximum of 
its SDP relaxation \cite{grothendieck1996resume,khot2012grothendieck}. Our main tool is instead an inequality that applies to all approximate concave ponts in the non-convex problem.
\begin{theorem}
\label{thm:apprxconcavepoint}
For any $\eps$-approximate concave point $\sigma \in \cM_k$ of the rank-$k$ non-convex problem (\ref{NcvxSDP}), we have
\begin{align}
\func( \sigma) \geq \SDP(A) - \frac{1}{k-1} (\SDP(A) + \SDP(-A)) - \frac{n}{2}\eps.
\end{align}
\end{theorem}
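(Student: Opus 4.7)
The plan is to leverage the $\eps$-approximate concavity hypothesis via a carefully designed randomized rounding argument. From the explicit formula for the Riemannian Hessian recorded in the excerpt, the hypothesis reads
\[
\sum_{i,j} A_{ij}\,\langle u_i, u_j\rangle \;-\; \sum_i \Lambda_{ii}\, \|u_i\|^2 \;\leq\; \frac{\eps}{2}\sum_i \|u_i\|^2 \qquad \text{for every } u = (u_1, \ldots, u_n)^\sT \in T_\sigma\mc{M}_k,
\]
with $\Lambda = \ddiag(A\sigma\sigma^\sT)$ and in particular $\sum_i \Lambda_{ii} = \func(\sigma)$. Fix an SDP optimizer $X^\star = Y Y^\sT$ with $Y \in \reals^{n\times m}$ having unit rows $y_i$, so $\SDP(A) = \langle A, X^\star\rangle$. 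The idea is to integrate the displayed inequality against a random tangent direction $u$ whose expected Gram matrix almost reproduces $X^\star$.

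The random construction is the following: draw $G \in \reals^{k\times m}$ with i.i.d.\ $\Nor(0, 1/k)$ entries, set $v_i \defeq G y_i \in \reals^k$, and project onto the tangent space by $u_i \defeq v_i - \langle v_i, \sigma_i\rangle \sigma_i$. Isotropy of $G$ yields $\E[v_i v_j^\sT] = \frac{\langle y_i, y_j\rangle}{k}\id_k$, and expanding $\langle u_i, u_j\rangle$ into its four bilinear terms and taking expectation gives
\[
\E[\langle u_i, u_j\rangle] \;=\; \frac{k-2 + \langle \sigma_i,\sigma_j\rangle^2}{k}\,\langle y_i,y_j\rangle, \qquad \E[\|u_i\|^2] \;=\; \frac{k-1}{k}.
\]
Substituting these into the expected concavity inequality and writing $X = \sigma\sigma^\sT$ produces
\[
\frac{k-2}{k}\SDP(A) + \frac{1}{k}\langle A, X^\star \odot X \odot X\rangle \;\leq\; \frac{k-1}{k}\func(\sigma) + \frac{n(k-1)\eps}{2k},
\]
where $\odot$ denotes the Hadamard product.

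The decisive step is that $X^\star \odot X \odot X$ itself belongs to the elliptope: its diagonal entries are all $1$, and it is PSD by the Schur product theorem applied to the three PSD matrices $X^\star,X,X$. Hence $\langle A, X^\star \odot X \odot X\rangle \geq -\SDP(-A)$, and multiplying through by $k/(k-1)$ and rearranging yields exactly
\[
\func(\sigma) \;\geq\; \SDP(A) - \frac{1}{k-1}\bigl(\SDP(A) + \SDP(-A)\bigr) - \frac{n\eps}{2}.
\]
The main obstacle is really in designing the rounding: it must land in the tangent space $T_\sigma\mc{M}_k$ and simultaneously admit an expected Gram matrix with leading coefficient $(k-2)/k$ on $X^\star$, with any residual contribution absorbable by an elliptope-valued matrix. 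The Gaussian embedding of $Y$ projected onto $\sigma_i^\perp$ meets both requirements, and the parasitic cross-term $\langle \sigma_i,\sigma_j\rangle^2 \langle y_i,y_j\rangle$ that this projection forces into the expectation is precisely what the Schur product theorem is tailored to absorb, converting it into the $\SDP(-A)$ contribution of the final bound.
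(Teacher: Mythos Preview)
Your proof is correct and is essentially identical to the paper's own argument: both take an SDP optimizer $X^\star = VV^\sT$, push it through a random Gaussian map $G$ with covariance $\id_k/k$, project row-wise onto $\sigma_i^\perp$ to land in $T_\sigma\cM_k$, and then average the $\eps$-approximate concavity inequality over $G$. The only cosmetic differences are that you name the Schur product theorem explicitly to justify $X^\star \odot X \odot X \succeq 0$ (the paper merely asserts $\bar X\succeq 0$), and you allow $Y\in\reals^{n\times m}$ rather than fixing $m=n$.
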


\subsection{Fast Riemannian trust-region algorithm}

We can use the structural information in  Theorem \ref{thm:apprxconcavepoint}, to develop an algorithm that approximately solves the problem (\ref{NcvxSDP}),
and hence the MaxCut SDP (\ref{SDP}). The algorithm we propose is a variant of the Riemannian trust-region algorithm. 

The Riemannian trust-region algorithm (RTR) \cite{absil2007trust} is a generalization of the trust-region algorithm to manifolds. To maximize
the objective function $\func$ on the manifold $\cM$, RTR  proceeds as
follows:  at each step, we find a direction $\xi \in T_\sigma \cM$ that maximizes the quadratic approximation of $\func$ over a ball of small radius $\eta_\sigma$
\begin{align}
\tag{RTR-update} \label{RTR-update}
\xi_*\equiv \arg\max\Big\{ \func( \sigma) + \langle \tgrad  \func( \sigma), \xi \rangle + \langle \xi, \thess \func( \sigma) [\xi]\rangle, \quad 
\xi \in T_{\sigma} \cM\, ,\;\;  \Vert \xi \Vert \leq \eta_{\sigma}\big\},
\end{align}
where $\tgrad \func(\sigma)$ is the manifold gradient of $\func$, and the radius $\eta_\sigma$ is chosen to ensure that the higher order terms remain small. The next iterate $\sigma^{\rm new} = \proj_{\cM} (\sigma + \xi^*)$ is obtained by projecting $\sigma + \xi^*$ back onto the manifold.

Solving the trust-region problem (\ref{RTR-update}) exactly is computationally expensive. In order to obtain a faster algorithm, 
we adopt two variants in the RTR algorithm. First, if the gradient of $\func$ at the current estimate $\sigma^t$ is
sufficiently large, we only use gradient information to determine the new direction: we call this a \emph{gradient-step}; if the gradient is small (i.e. we are at an approximately stationary point), we try to maximize uniquely the Hessian contribution:
we call this an \emph{eigen-step}. Second, in an eigen-step, we only approximately maximize the Hessian contribution. Let us emphasize that these two variants are commonly used and
we do not claim they are novel. 

For the non-convex MaxCut SDP problem (\ref{NcvxSDP}), we describe the algorithm concretely as follows. In each step, first we find a direction $u^t$ using the direction-finding routine outlined 
below\footnote{Throughout the paper, points $\sigma\in\cM_k$ and vectors $u\in T_\sigma \cM_k$ are represented by matrices $\sigma,u\in \reals^{n\times k}$ and
hence the norm on $T_\sigma \cM_k$ is identified with the Frobenius norm $\|u\|_F$.}. 

\begin{center}
	\begin{tabular}{ll}
	\hline
	\vspace{-0.35cm}\\
	\multicolumn{2}{l}{ {\sc Direction-Finding Algorithm}}\\
	\hline
	\vspace{-0.35cm}\\
	\multicolumn{2}{l}{ {\bf Input :}  Current position $\sigma^t$; parameter $\mu_G$;}\\
	\multicolumn{2}{l}{ {\bf Output :}  Searching direction $u^t$ with $\Vert u^t \Vert_F = 1$;}\\
        1: & Compute $\Vert \tgrad \func( \sigma^t) \Vert_F$;\\
	2: & If $\Vert \tgrad \func( \sigma^t) \Vert_F > \mu_G$\\
        3: &\phantom{aa} Return $u^t =  \tgrad \func( \sigma^t)/\Vert \tgrad \func( \sigma^t)\Vert_F$;\\
	4: &Else\\
	5: &\phantom{aa} Use power method to construct a direction $u^t \in T_\sigma \cM_k$ such that \\
	    &\phantom{aa} $\Vert u^t \Vert_F = 1$, $\langle u^t, \thess \func( \sigma^t) [u^t] \rangle \geq \lambda_{\max}(\thess \func(\sigma^t))/2$, \\
	    & \phantom{aa} and $\langle u^t, \tgrad \func( \sigma^t) \rangle \geq 0$; Return $u^t$. \\
        6: & End\\
	\vspace{-0.35cm}\\
	\hline
	\end{tabular}
\end{center}

Given this direction $u^t$, we update our current estimate by
 $\sigma^{t+1}= \proj_{\cM_k} (\sigma^t + \eta^t u^t)$ with $\eta^t$ an appropriately chosen step size. We consider two specific implementations for the parameter $\mu_G$ and the choice of step size:

\begin{enumerate}
\item[${\boldsymbol (a)}$] Take $\mu_G = \infty$, which means that only eigen-steps are used. In this implementation, we take the step size $\eta_H^t =  \langle u^t, \thess \func(\sigma^t) [u^t] \rangle/(100 \Vert A \Vert_1)$. 

\item[${\boldsymbol (b)}$] Take $\mu_G = \Vert A \Vert_2$. When $\Vert \tgrad \func(\sigma^t) \Vert_F > \mu_G$, we choose the step size $\eta_G^t = \mu_G / (20 \Vert A \Vert_1)$.
When $\Vert \tgrad \func(\sigma^t) \Vert_F \leq \mu_G$, we choose the step size $\eta_H^t  =   \min \{ \sqrt{\lambda_H^t/(216 \Vert A \Vert_1)};\lambda_H^t/(12 \Vert A \Vert_2) \}$, where $\lambda_H^t = \langle u^t, \thess \func(\sigma^t) [u^t] \rangle$. 
\end{enumerate}

In each eigen-step, we need to compute a direction $u \in T_\sigma \cM_k$ such that $\Vert u \Vert_F = 1$ and $\langle u, \thess \func( \sigma) [u] \rangle \geq \lambda_{\max}(\thess \func(\sigma))/2$. 
This can be done using the following power method. (Note that the condition $\<u^t,\tgrad f(\sigma^t)\>\ge 0$ can always be ensured eventually by replacing $u^t$ by $-u^t$.)
\begin{center}
	\begin{tabular}{ll}
	\hline
	\vspace{-0.35cm}\\
	\multicolumn{2}{l}{ {\sc Power Method}}\\
	\hline
	\vspace{-0.35cm}\\
	\multicolumn{2}{l}{ {\bf Input :}  $\sigma$, $\thess \func(\sigma)$; parameters $N_H$, $\mu_H$;}\\
	\multicolumn{2}{l}{ {\bf Output :}  $u \in T_\sigma \cM_k$, such that $\Vert u \Vert_F = 1$ and $\langle u, \thess \func( \sigma) [u] \rangle \geq \lambda_{\max}(\thess \func(\sigma))/2$;}\\
        1: & Sample a $u^0$ uniformly randomly on $T_\sigma \cM_k$ with $\Vert u^0 \Vert = 1$;\\
	2: & For $i = 1,\ldots, N_H$\\
        3: &\phantom{aa} $u^{i} \leftarrow \thess \func( \sigma) [u^{i-1}] + \mu_H \cdot u^{i-1}$;\\
        4: &\phantom{aa} $u^{i} \leftarrow u^{i}/ \Vert u^{i} \Vert_F$;\\
	5: &End\\
        6: & Return $u^{N}$.\\
	\vspace{-0.35cm}\\
	\hline
	\end{tabular}
\end{center}

The shifting parameter $\mu_H$ can be chosen as $4 \Vert A \Vert_1$ which is an upper bound of $\Vert \thess \func(\sigma)
\Vert_{\op}$. We take the parameter $N_H = C \cdot \Vert A \Vert_1 \log n/\lambda_{\max}(\thess \func(\sigma))$ with a large absolute
constant $C$. In practice, when choosing the parameter $N_H$, we do not know $\lambda_{\max}(\thess \func(\sigma))$ for each $\sigma$, but
we can replace it by a lower bound, or estimate it using some heuristics. It is  a classical result that --with high probability--
the power method with this number of iterations finds a solution $u^t$ with the required curvature \cite{kuczynski1992estimating}. 

\begin{theorem}\label{thm:trustregion}
There exists a universal constant $c$ such that, for any matrix $A$ and $\eps >0$, the Fast Riemannian Trust-Region method with step size as described above for each iteration and initialized with any $\sigma_0 \in \mathcal{M}_k$ returns a point $\sigma^* \in \cM_k$ with
\begin{align}
\func( \sigma^*) \geq \SDP(A) - \frac{1}{k-1} (\SDP(A) + \SDP(-A)) - \frac{n}{2} \eps,\label{eq:trustregion}
\end{align}
within the following number of steps with each implementation
\begin{enumerate}
\item[$(a)$] Taking $\mu_G=\infty$ (i.e. only eigen-steps are used), then it is sufficient to run  $T _H\leq c \cdot n \Vert A \Vert_1^2/\eps^2$ steps. 
\item[$(b)$] Taking $\mu_G = \Vert A \Vert_2$, then it is sufficient  to run $T = T_H + T_G$ steps in which there are $T_H \leq c \cdot  n \max \left( \Vert A \Vert_2^2/\eps^2, \Vert A \Vert_1/\eps \right)$ eigen-steps and $T_G \leq c \cdot  \RGD (A) \Vert A \Vert_1/\Vert A \Vert_2^2$ gradient-steps. 
\end{enumerate}
\end{theorem}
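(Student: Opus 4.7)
The plan is to reduce Theorem \ref{thm:trustregion} to Theorem \ref{thm:apprxconcavepoint}: it suffices to show that the algorithm produces an $\eps$-approximate concave point within the stated number of iterations, since then \eqref{eq:trustregion} is immediate from Theorem \ref{thm:apprxconcavepoint}. Because $f(\sigma^t)$ is monotone along the iterates by design of each step, and the range of $f$ on $\cM_k$ is bounded by $\RGD(A)\le 2n\|A\|_{\op}$, the argument reduces to a per-iteration lower bound on $f(\sigma^{t+1})-f(\sigma^t)$ whenever $\sigma^t$ fails to be $\eps$-approximately concave (and, in $(b)$, whenever $\|\tgrad f(\sigma^t)\|_F>\mu_G$).

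First I would derive a second-order Taylor estimate along the projection retraction. For $\sigma\in\cM_k$ and $u\in T_\sigma\cM_k$ with $\|u\|_F=1$, row-wise normalization gives
\begin{align*}
\sigma_i(\eta)=(\sigma_i+\eta u_i)/\sqrt{1+\eta^2\|u_i\|^2}=\sigma_i+\eta u_i-\tfrac{\eta^2}{2}\|u_i\|^2\sigma_i+O(\eta^3).
\end{align*}
Substituting into the quadratic $f(\sigma)=\langle\sigma,A\sigma\rangle$ and collecting terms yields
\begin{align*}
f(\sigma(\eta))=f(\sigma)+\eta\langle\tgrad f(\sigma),u\rangle+\tfrac{\eta^2}{2}\langle u,\thess f(\sigma)[u]\rangle+r(\eta),
\end{align*}
with remainder $|r(\eta)|\le C\eta^3\|A\|_1$. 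The cubic bound comes from terms such as $\langle u,AD_u\sigma\rangle=\sum_{ij}A_{ij}\|u_j\|^2\langle u_i,\sigma_j\rangle$, where $D_u=\ddiag(\|u_i\|^2)$, which are estimated by $\sum_{ij}|A_{ij}|\|u_j\|^2\|u_i\|\le\|A\|_1$; this bound can be sharpened to an $\|A\|_{\op}$-type estimate when $A$ has non-negative entries, which is the essential case for MaxCut. The identity $\langle v,\thess f(\sigma)[v]\rangle=2\langle v,(A-\Lambda)v\rangle$ also implies $\|\thess f(\sigma)\|_{\op}\le 4\|A\|_1$, used to bound the second-order term in the gradient-step analysis.

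Next I would turn this into per-step progress lemmas. In a gradient-step, $u^t=\tgrad f(\sigma^t)/\|\tgrad f(\sigma^t)\|_F$ satisfies $\langle\tgrad f(\sigma^t),u^t\rangle>\mu_G$, and with $\eta_G=\mu_G/(20\|A\|_1)$ the linear gain dominates the second- and third-order terms, giving $f(\sigma^{t+1})-f(\sigma^t)\gtrsim\mu_G^2/\|A\|_1$. In an eigen-step, the {\sc Power Method} returns $u^t$ with $\lambda_H^t\defeq\langle u^t,\thess f(\sigma^t)[u^t]\rangle\ge\tfrac12\lambda_{\max}(\thess f(\sigma^t))$ and $\langle u^t,\tgrad f(\sigma^t)\rangle\ge 0$, so the Taylor estimate specializes to $f(\sigma^{t+1})-f(\sigma^t)\ge\tfrac{\eta^2}{2}\lambda_H^t-C\eta^3\|A\|_1$. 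The step sizes in $(a)$ and $(b)$ are precisely calibrated so the quadratic gain exceeds a constant multiple of the cubic loss, producing per-step progress of order $(\lambda_H^t)^3/\|A\|_1^2$ in $(a)$, and alternating between $(\lambda_H^t)^2/\|A\|_1$ and $(\lambda_H^t)^3/\|A\|_{\op}^2$ in $(b)$ according to which branch of $\min\{\sqrt{\lambda_H^t/(216\|A\|_1)},\lambda_H^t/(12\|A\|_{\op})\}$ is active.

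To close the argument, whenever $\sigma^t$ is not $\eps$-approximately concave we have $\lambda_{\max}(\thess f(\sigma^t))>\eps$, hence $\lambda_H^t\ge\eps/2$. Summing the per-step progress estimates over $t$ and comparing to the total range $f(\sigma^*)-f(\sigma_0)\le\RGD(A)\le 2n\|A\|_{\op}$ gives iteration counts of the claimed form; in $(b)$, the gradient-step count $T_G\le c\,\RGD(A)\|A\|_1/\|A\|_{\op}^2$ follows directly from the progress bound $\mu_G^2/\|A\|_1$ with $\mu_G=\|A\|_{\op}$. The main technical obstacle is the cubic Taylor analysis: tracking the remainder $r(\eta)$ along the projection retraction carefully enough to justify the aggressive step size $\sqrt{\lambda_H^t/(216\|A\|_1)}$ in branch one of $(b)$, which is what converts the generic eigen-step bound $(\lambda_H^t)^3/\|A\|_1^2$ into the sharper $(\lambda_H^t)^2/\|A\|_1$ responsible for the $\eps^{-2}$ scaling in $T_H$.
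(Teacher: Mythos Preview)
Your Taylor analysis along the projection retraction and the per-step progress estimates are essentially correct and match the paper's Lemmas \ref{lem:grad}--\ref{lem:hess2}. The gap is in the final summation.

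You lower-bound the eigen-step curvature by the \emph{static} quantity $\lambda_H^t\ge\eps/2$, valid until an $\eps$-approximate concave point is reached, and then sum constant increments against the total range. In $(a)$ this gives per-step progress $\gtrsim\eps^3/\|A\|_1^2$ and hence $T_H\lesssim\RGD(A)\,\|A\|_1^2/\eps^3\le 2n\|A\|_1^3/\eps^3$, which is worse than the claimed $n\|A\|_1^2/\eps^2$ by the unbounded factor $\|A\|_1/\eps$. The same mismatch occurs in $(b)$: your argument yields $T_H\lesssim n\|A\|_2\max(\|A\|_1/\eps^2,\|A\|_2^2/\eps^3)$ rather than $n\max(\|A\|_2^2/\eps^2,\|A\|_1/\eps)$. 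So the assertion ``gives iteration counts of the claimed form'' is where the argument fails.

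The missing idea is to use Theorem \ref{thm:apprxconcavepoint} at its full strength: since it holds for every $\eps$, it yields the \emph{dynamic} curvature bound $\lambda_{\max}(\thess f(\sigma^t))\ge 2g(\sigma^t)/n$, where $g(\sigma)\defeq f^*-f(\sigma)$ and $f^*=\SDP(A)-\RGD(A)/(k-1)$. Substituting into the cubic progress estimate gives the nonlinear recursion $g(\sigma^t)-g(\sigma^{t+1})\gtrsim g(\sigma^t)^3/(n^3\|A\|_1^2)$. This is handled not by summing constant decrements but by telescoping in $1/g^2$: since $g$ is nonincreasing,
\[
\frac{1}{g(\sigma^{t+1})^2}-\frac{1}{g(\sigma^t)^2}\;\gtrsim\;\frac{1}{n^3\|A\|_1^2},
\]
whence $g(\sigma^T)\lesssim n\|A\|_1\sqrt{n/T}$, and solving $g(\sigma^T)\le n\eps/2$ gives exactly $T_H\lesssim n\|A\|_1^2/\eps^2$. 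In $(b)$ one telescopes in $1/g$ or $1/g^2$ according to which branch of the step-size $\min$ is active, producing the two terms $n\|A\|_1/\eps$ and $n\|A\|_2^2/\eps^2$.

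One minor point: the sharpening of the cubic remainder that enables the aggressive step size in $(b)$ does not come from nonnegativity of $A$. The paper's Lemma \ref{lem:improvedthird} shows the leading third-order coefficient is bounded by $6\|A\|_2+3\|\tgrad f(\sigma)\|_F$, which is $O(\|A\|_2)$ precisely because eigen-steps are only taken when $\|\tgrad f(\sigma)\|_F\le\mu_G=\|A\|_2$.
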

The gap $\RGD(A) / (k-1) = (\SDP(A)+\SDP(-A))/(k-1)$ in Eq.~(\ref{eq:trustregion}), is due to the fact that Theorem \ref{thm:apprxconcavepoint} does not rule out the presence of local maxima 
within an  interval $\RGD(A) / (k-1)$ from the global maximum.
It is therefore natural to set $\eps = 2\RGD(A)/(n(k-1))$, to obtain the following corollary.
\begin{corollary}\label{cor:RTRapprox}
There exists a universal constant $c$ such that for any matrix $A$, the Fast Riemannian Trust-Region method with step size as described above
for each iteration and initialized with any $\sigma_0 \in \mathcal{M}_k$ returns a point $\sigma^* \in \cM_k$ with
\begin{align}\label{eq:approx}
\func( \sigma^*) \geq \SDP(A) - \frac{2}{k-1} (\SDP(A) + \SDP(-A)) 
\end{align}
within the following number of steps with each implementation
\begin{enumerate}
\item[$(a)$] Taking $\mu_G=\infty$ , then it is sufficient to run  $T_H \leq c \cdot n k^2 \left( n \Vert A \Vert_1/\RGD(A) \right)^2$ eigen-steps. 
\item[$(b)$] Taking $\mu_G = \Vert A \Vert_2$, then it is sufficient  to run $T = T_H + T_G$ steps in which there are $T_H \leq c \cdot  n \max \left( n^2 k^2 \Vert A \Vert_2^2/\RGD(A)^2, nk \Vert A \Vert_1/\RGD(A) \right)$ eigen-steps and $T_G \leq c \cdot  \RGD (A) \Vert A \Vert_1/\Vert A \Vert_2^2$ gradient-steps. 
\end{enumerate}
\end{corollary}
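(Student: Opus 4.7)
The plan is to derive the corollary as an immediate specialization of Theorem \ref{thm:trustregion}, by picking the value of $\eps$ that balances the two terms in the approximation guarantee. Concretely, I would choose
\[
\eps \;=\; \frac{2\,\RGD(A)}{n(k-1)} \;=\; \frac{2\,(\SDP(A)+\SDP(-A))}{n(k-1)},
\]
so that the additive slack $\tfrac{n}{2}\eps$ in \eqref{eq:trustregion} equals exactly the structural gap $\RGD(A)/(k-1)$ already present in that bound. Summing the two identical terms yields
\[
\func(\sigma^*) \;\geq\; \SDP(A) - \frac{2}{k-1}\bigl(\SDP(A)+\SDP(-A)\bigr),
\]
which is precisely \eqref{eq:approx}. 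This step uses nothing beyond Theorem \ref{thm:trustregion} and is essentially bookkeeping.

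The remaining content of the corollary is then just the substitution of this choice of $\eps$ into the iteration complexity bounds of Theorem \ref{thm:trustregion}. For implementation $(a)$, I would compute
\[
T_H \;\leq\; c\cdot\frac{n\,\|A\|_1^2}{\eps^2} \;=\; c\cdot n\,\|A\|_1^2\cdot\frac{n^2(k-1)^2}{4\,\RGD(A)^2} \;\leq\; c'\cdot n k^2\left(\frac{n\|A\|_1}{\RGD(A)}\right)^2,
\]
which matches the claimed bound after absorbing the factor $1/4$ into the universal constant. For implementation $(b)$, the same substitution gives
\[
\frac{n\|A\|_2^2}{\eps^2} \;\leq\; c'\cdot\frac{n^3 k^2\,\|A\|_2^2}{\RGD(A)^2}, \qquad \frac{n\|A\|_1}{\eps} \;\leq\; c'\cdot\frac{n^2 k\,\|A\|_1}{\RGD(A)},
\]
so the eigen-step count satisfies the advertised $T_H \leq c\cdot n\max(n^2 k^2\|A\|_2^2/\RGD(A)^2,\ nk\|A\|_1/\RGD(A))$ bound. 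The gradient-step count $T_G$ does not depend on $\eps$, so it is inherited verbatim from Theorem \ref{thm:trustregion}.

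There is really no obstacle to overcome here: the only nontrivial modeling decision is the choice of $\eps$, and that choice is essentially forced by the form of \eqref{eq:trustregion}. Any smaller $\eps$ would improve the accuracy only in the lower-order term while blowing up the iteration counts polynomially, and any larger $\eps$ would be dominated by the structural $\RGD(A)/(k-1)$ gap and so give no additional benefit; balancing the two yields the stated result up to an adjustment of the universal constant $c$.
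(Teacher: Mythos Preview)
Your proposal is correct and matches the paper's approach exactly: the paper simply remarks, immediately before stating the corollary, that one should set $\eps = 2\RGD(A)/(n(k-1))$ in Theorem~\ref{thm:trustregion}, and the corollary follows by the same substitutions you carry out. There is no additional content in the paper's derivation beyond this choice of $\eps$ and the resulting arithmetic.
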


In order to develop some intuition on these complexity bounds, let us consider two specific examples.

Consider the problem of finding the minimum bisection of a random $d$-regular graph $G$, with adjacency matrix $A_G$. A natural SDP relaxation is given by the SDP 
(\ref{SDP}) with $A = A_G-\E A_G = A_G-(d/n)\ones\ones^{\sT}$ the centered adjacency matrix. For this choice of $A$, we have $\Vert A \Vert_1 \le 2d$, $\Vert A \Vert_2 = 2 \sqrt{d-1}(1+o_n(1))$ \cite{friedman2003proof}, 
$\SDP(A) = 2n \sqrt{d-1} + o(n)$ and $\SDP(-A) = 2 n \sqrt{d-1} + o(n)$ \cite{montanari2016semidefinite} (with high probability). Using implementation $(a)$ (only eigen-steps), the bound on the 
number of iterations  in Corollary \ref{cor:RTRapprox} scales as $T_H = \mathcal{O}(n d k^2)$. In implementation $(b)$, we choose $\mu_G = \Theta(\sqrt{d})$, and the number of gradient-steps and eigen-steps scale respectively as 
$T_G = \mathcal{O}(n \sqrt d)$  and $T_H=\mathcal{O}(n k \max(k, \sqrt d))$. In terms of floating point operations, in each gradient-step, the computation of the gradient costs $\mathcal{O} (ndk)$ operations; in each eigen-step, each iteration of the power method costs $\mathcal{O} (ndk)$ operations and the number of iterations in each power method scales as $\mathcal{O} (k \sqrt d \log n)$. 
Implementation $(b)$ presents a better scaling. The total number of floating point operations to find a $(1 - O(1/k))$ approximate solution of the minimum bisection SDP of a random $d$-regular graph 
is (with high probability) upper bounded by $\mathcal{O}( n^2 k^3 d^{3/2} \max(k,\sqrt d)\log n)$.

As a second example, consider the MaxCut problem for a $d$-regular graph $G$, with adjacency matrix $A_G$. This can be addressed by considering the SDP (\ref{SDP}) with $A = -A_G$,
and the corresponding non-convex version (\ref{NcvxSDP}). As shown in the next section, finding a $2\RGD(A)/(n(k-1))$-approximate concave point of (\ref{NcvxSDP}) yields an $(1 - O(1/k)) \times 0.878$-approximation of the MaxCut of $G$. 
For this choice of $A$, we have $\Vert A \Vert_1 =d$, $\Vert A \Vert_2 = d$, and $\RGD(A)= \Theta(nd)$. Therefore, in implementation $(a)$ where all the steps are eigen-step, the number of iterations given by Corollary \ref{cor:RTRapprox} scales as $T_H = \mathcal{O}(n k^2)$. In implementation $(b)$, we choose $\mu_G = \Theta(d)$, and the number of gradient-steps and eigen-steps scale respectively as $T_G = \mathcal{O}(n)$  and $T_H=\mathcal{O}(nk^2)$. In terms of floating point operations, the computational costs of one gradient-step and one eigen-step power iteration are the same (which are $\mathcal{O}(ndk)$) as in the example of minimum bisection SDP. The number of iterations in  the power method scales 
as $\mathcal{O} (k \log n)$. Therefore, the two approaches are equivalent. The total number of floating point operations to find a $(1 - O(1/k)) \times 0.878$ approximate solution of the MaxCut of a 
$d$-regular graph is upper bounded by $\mathcal{O}( n^2 d k^4 \log n)$.

Let us emphasize that the complexity bound in Theorem \ref{thm:trustregion}  is not superior to the ones available for some alternative approaches.
There is a vast literatures that studies fast  SDP solvers \cite{arora2005fast, arora2007combinatorial, steurer2010fast,
 garber2011approximating}. In particular, \cite{arora2007combinatorial, steurer2010fast}  give nearly linear-time algorithms to approximate (\ref{SDP}).
These algorithms are different from the one studied here, and rely on the multiplicative weight update method \cite{arora2012multiplicative}. 
Using sketching techniques, their  complexity can be further reduced \cite{garber2011approximating}.
However, in practice,  the Burer-Monteiro approach studied here  is extremely simple and scales well to large instances \cite{burer2003nonlinear,javanmard2016phase}.
Empirically, it appears to have better complexity than what is guaranteed by our theorem. 
 It would be interesting to compare the multiplicative weight update method and the non-convex approach both theoretically and experimentally. 

\subsection{Application to MaxCut}

Let  $A_G \in \R^{n \times n}$ denote the weighted adjacency matrix of a non-negative weighted graph $G$. The  MaxCut of $G$ is given by the following integer program
\begin{align}\label{eq:MaxCut}
\MaxCut(G) = \max_{x_i \in \{ -1, +1\}} \frac{1}{4}\sum_{i,j=1}^n A_{G,ij} (1-x_i x_j). 
\end{align}
We consider the following semidefinite programming relaxation
\begin{align}\label{eq:SDPCut}
\text{SDPCut}(G) = \max_{X \succeq 0, X_{ii}=1} \frac{1}{4} \sum_{i,j=1}^n A_{G,ij}(1 - X_{ij}).
\end{align}
Denote by $X^*$ the solution of this SDP. Goemans and Williamson \cite{goemans1995improved} proposed a 
celebrated rounding scheme using this $X^*$, which is guaranteed to find an $\alpha_*$-approximate solution to the MaxCut problem (\ref{eq:MaxCut}),
where $\alpha_* \equiv \min_{\theta\in[0,\pi]}2\theta/(\pi(1-\cos\theta))$, $\alpha_*>0.87856$.

The corresponding rank-$k$ non-convex formulation is given by
\begin{equation}\label{eq:NcvxSDP}
\max_{\sigma} \left\{\frac{1}{4} \sum_{i,j=1}^n A_{G,ij} (1-\langle \sigma_i, \sigma_j\rangle)\, :\;\;
 \sigma_i \in \mathbb S^{k-1}, \forall i\in [n]\right\}\, .
\end{equation}
Applying Theorem \ref{thm:apprxconcavepoint}, we obtain the following result. 
\begin{theorem}\label{thm:MaxCut}
For any $k \ge 3$, if $\sigma^*$ is a local maximizer of the rank-$k$ non-convex SDP problem (\ref{eq:NcvxSDP}), then using $\sigma^*$ we can find an $\alpha_* \times (1-1/(k-1)) \ge 0.878 \times (1-1/(k-1))$-approximate solution of the MaxCut problem (\ref{eq:MaxCut}). If $\sigma^*$ is a $2\RGD(A_G)/(n(k-1))$-approximate concave point, then using $\sigma^*$ we can find an $\alpha_*\times (1-2/(k-1)) \ge 0.878 \times (1-2/(k-1))$-approximate solution of the MaxCut problem. 
\end{theorem}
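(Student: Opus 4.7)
The plan is to reduce the MaxCut SDP (\ref{eq:SDPCut}) to the form (\ref{SDP}) treated by Theorem \ref{thm:apprxconcavepoint}, and then round using the Goemans--Williamson scheme. Set $A=-A_G$ and $C=\tfrac14\sum_{ij}A_{G,ij}$, so that the objective of (\ref{eq:NcvxSDP}) evaluated at $\sigma\in\cM_k$ equals $V(\sigma)=C+\tfrac14\langle \sigma,A\sigma\rangle=C+\tfrac14 f(\sigma)$. Because this is an affine increasing function of $f$, local maximizers of $V$ on $\cM_k$ coincide with local maximizers of $f$, and an $\eps$-approximate concave point of $f$ corresponds (up to a factor of $1/4$ in the Hessian) to an approximate concave point of $V$.

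Next I compute $\RGD(A_G)$ in SDP terms. Because $A_G\ge 0$ entrywise and every $X$ in the elliptope satisfies $X_{ij}\le \sqrt{X_{ii}X_{jj}}=1$, we have $\SDP(A_G)\le \sum_{ij}A_{G,ij}=4C$, with equality attained at $X=\ones\ones^{\sT}$. Meanwhile, the definition of SDPCut gives $\text{SDPCut}(G)=C+\tfrac14\SDP(-A_G)$, so adding the two identities produces the clean formula $\RGD(A_G)=\SDP(A_G)+\SDP(-A_G)=4\,\text{SDPCut}(G)$. Applying Theorem \ref{thm:apprxconcavepoint} with $-A_G$, a local maximizer satisfies $f(\sigma^*)\ge \SDP(-A_G)-\tfrac{1}{k-1}\RGD(-A_G)$, which after translating back to $V$ becomes $V(\sigma^*)\ge (1-\tfrac{1}{k-1})\,\text{SDPCut}(G)$. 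For an $\eps$-approximate concave point of $f$ with $\eps=2\RGD(A_G)/(n(k-1))=8\,\text{SDPCut}(G)/(n(k-1))$, the additive error $n\eps/8$ on $V$ equals $\text{SDPCut}(G)/(k-1)$, yielding $V(\sigma^*)\ge (1-\tfrac{2}{k-1})\,\text{SDPCut}(G)$.

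Finally I apply Goemans--Williamson hyperplane rounding to $\sigma^*$: sample $g\sim\normal(0,\id_k)$ and set $x_i=\sign(\langle \sigma^*_i,g\rangle)$. Since $A_{G,ij}\ge 0$, the pointwise inequality $\E[1-x_ix_j]=(2/\pi)\arccos(\langle\sigma^*_i,\sigma^*_j\rangle)\ge \alpha_*(1-\langle\sigma^*_i,\sigma^*_j\rangle)$ (with $\alpha_*=\min_\theta 2\theta/(\pi(1-\cos\theta))$) survives the non-negative weighted sum, so $\E[\text{cut value}]\ge \alpha_* V(\sigma^*)$, and a deterministic realization achieves at least this expectation. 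Combining with $\text{SDPCut}(G)\ge \MaxCut(G)$ gives $\MaxCut(G)\ge \alpha_* V(\sigma^*)\ge \alpha_*(1-c/(k-1))\,\MaxCut(G)$ with $c\in\{1,2\}$ in the two cases. The only step that requires any genuine computation is the identity $\RGD(A_G)=4\,\text{SDPCut}(G)$, which is where non-negativity of $A_G$ is used and which makes the bound of Theorem \ref{thm:apprxconcavepoint} line up exactly with the MaxCut SDP value.
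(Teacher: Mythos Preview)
Your proof is correct and follows essentially the same approach as the paper's: set $A=-A_G$, use non-negativity of $A_G$ to identify $\SDP(A_G)=\sum_{ij}A_{G,ij}$, apply Theorem~\ref{thm:apprxconcavepoint}, and then round via Goemans--Williamson. Your explicit identity $\RGD(A_G)=4\,\text{SDPCut}(G)$ is a clean way to package the key computation; the only slip is in the final displayed chain, where the leftmost quantity should be the expected rounded cut value (which is at most $\MaxCut(G)$ and at least $\alpha_* V(\sigma^*)$), not $\MaxCut(G)$ itself.
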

The proof is deferred to Section \ref{sec:proof_thm3}.

\subsection{$\mathbb Z_2$ synchronization}

Recall the definition of the Gaussian Orthogonal Ensemble. We write $W \sim \GOE(n)$ if $W \in \R^{n \times n}$ is symmetric
with $(W_{ij})_{i\le j}$ independent, with distribution $W_{ii} \sim \normal (0,2/n)$ and $W_{ij} \sim \normal (0,1/n)$ for $i < j$. 

In the $\mathbb Z_2$ synchronization problem, we are required  to estimate the vector $u \in  \lbrace \pm 1 \rbrace^{n}$ from noisy pairwise measurements
\begin{equation}\label{spiked}
A(\lambda) = \frac{\lambda}{n} u u^\sT + W_n\, ,
\end{equation}
where $W_n \sim \GOE(n)$, and  $\lambda$ is a signal-to-noise ratio. 
The random matrix model (\ref{spiked}) is also known as the `spiked model' \cite{johnstone2001distribution} 
or `deformed Wigner matrix' 
and has attracted significant   attention across statistics and probability theory \cite{baik2005phase}.

The Maximum Likelihood Estimator for recovering the labels $u \in \lbrace \pm 1 \rbrace^n$ is given by
\[
\hu^{\sML}(A) = \arg\max_{x \in \lbrace \pm 1 \rbrace^n}\<x,Ax\>\, .
\] 
A natural SDP relaxation of this optimization problem is given --once more-- by  (\ref{SDP}). 

It is known that $\integers_2$ synchronization undergoes a phase transition at $\lambda_c=1$.
For $\lambda\le 1$, no statistical estimator $\hu(A)$ achieves scalar product $|\<\hu(A),u\>|$ bounded away from $0$ 
as $n\to\infty$. For $\lambda>1$, there exists an estimator with $|\<\hu(A),u\>|$ bounded away from $0$ (`better than random guessing')
\cite{korada2009exact, deshpande2015asymptotic}. Further, for $\lambda<1$ it is not possible\footnote{To the best of our knowledge, a formal proof of this statement has not been published. 
However, a proof can be obtained by the techniques of \cite{mossel2015reconstruction}.}
to distinguish whether $A$ is drawn from the spiked model or $A\sim \GOE(n)$ with probability of
error converging to $0$ as $n\to\infty$. This is instead possible for $\lambda\ge 1$.

It was proved in \cite{montanari2016semidefinite} that the SDP relaxation (\ref{SDP}) --with a suitable rounding scheme--
achieves the information-theoretic threshold $\lambda_c=1$ for this problem.
 In this paper, we prove a similar result for the non-convex problem (\ref{NcvxSDP}). Namely, we show that for any signal-to-noise ratio $\lambda >1$ there exists a sufficiently large $k$ such that every local maximizer has a non trivial correlation to the ground truth.
Below we denote by $\Crit_{n,k}(A)$ the set of  local maximizers of problem (\ref{NcvxSDP}).
\begin{theorem}\label{thm:Z2sync1}
For any $\lambda > 1$, there exists a function $k_*(\lambda) > 0$, such that for any $k > k_*(\lambda)$, with high probability, any local maximizer $\sigma$ of the rank-$k$ non-convex SDP (\ref{NcvxSDP}) problem has non-vanishing correlation with the ground truth parameter. Explicitly, there exists $\eps = \eps(\lambda) > 0$ such that
\begin{align}
\lim_{n\to \infty}\prob\left(\inf_{\sigma \in \mathbb \Crit_{n,k}(A)}
  \frac{1}{n} \Vert \sigma^\sT u \Vert_2 \geq \eps\right) =1 \, .
\end{align}
\end{theorem}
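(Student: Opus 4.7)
My plan is to combine Theorem~\ref{thm:apprxconcavepoint} with the asymptotic behavior of $\SDP(A)/n$, $\SDP(W)/n$ and $\RGD(A)/n$ in the Gaussian spiked model. Any local maximizer $\sigma\in\mathcal M_k$ is in particular a $0$-approximate concave point, so Theorem~\ref{thm:apprxconcavepoint} gives $\langle\sigma,A\sigma\rangle\ge \SDP(A)-\RGD(A)/(k-1)$. Substituting $A=(\lambda/n)uu^\sT+W$ and using that $\sigma\sigma^\sT$ lies in the elliptope (PSD with unit diagonal), so that $\langle W,\sigma\sigma^\sT\rangle\le\SDP(W)$, I obtain the master inequality
\[
\frac{\lambda}{n}\,\|\sigma^\sT u\|_2^{\,2}\;\ge\;\SDP(A)-\SDP(W)-\frac{\RGD(A)}{k-1}.
\]

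The next task is to lower bound the right-hand side uniformly. Standard asymptotic results on semidefinite relaxations of spiked Wigner models (in the same spirit as those cited after Corollary~\ref{cor:RTRapprox}, and established e.g.\ in \cite{montanari2016semidefinite,javanmard2016phase}) give, with high probability as $n\to\infty$: $\SDP(W)/n\to 2$ for pure GOE, and for any $\lambda>1$, $\SDP(A)/n\to\lambda+1/\lambda$. The bound $\RGD(A)\le 2n\|A\|_{\op}$ combined with the BBP theorem yields $\RGD(A)/n\le 2(\lambda+1/\lambda)+o(1)$ w.h.p. Setting
\[
c(\lambda):=(\sqrt\lambda-1/\sqrt\lambda)^2>0,\qquad C(\lambda):=3(\lambda+1/\lambda),
\]
we have, with probability tending to one, $\SDP(A)-\SDP(W)\ge \tfrac12 c(\lambda)\,n$ and $\RGD(A)\le C(\lambda)\,n$.

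Plugging these into the master inequality and choosing $k_*(\lambda):=1+4C(\lambda)/c(\lambda)$, for every $k>k_*(\lambda)$ I obtain, uniformly over $\sigma\in\Crit_{n,k}(A)$ and with probability tending to one, $(\lambda/n)\,\|\sigma^\sT u\|_2^2\ge c(\lambda)n/4$, i.e.\ $\|\sigma^\sT u\|_2/n\ge\varepsilon(\lambda):=\sqrt{c(\lambda)/(4\lambda)}>0$, which is the claim. The only non-mechanical input is the limit $\SDP(A)/n\to\lambda+1/\lambda$ for $\lambda>1$: the upper bound is immediate from $\SDP(A)\le n\,\lambda_{\max}(A)$ together with BBP, whereas the lower bound requires turning the rank-one matrix $n\,v_1v_1^\sT$ (with $v_1$ the top eigenvector of $A$) into an elliptope element with only $o(n)$ loss in the objective, using the delocalization of $v_1$ together with its BBP-correlation $\sqrt{1-1/\lambda^2}$ with $u/\sqrt n$. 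This is the main obstacle; everything else is a clean algebraic consequence of Theorem~\ref{thm:apprxconcavepoint}.
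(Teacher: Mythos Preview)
Your argument follows essentially the same route as the paper's proof. Both apply Theorem~\ref{thm:apprxconcavepoint} with $\eps=0$ to a local maximizer, isolate the term $(\lambda/n)\|\sigma^\sT u\|_2^2$ by splitting $A=(\lambda/n)uu^\sT+W$ and bounding $\langle W,\sigma\sigma^\sT\rangle\le\SDP(W)$, and then reduce the claim to showing that $\SDP(A)-\SDP(W)\ge c(\lambda)\,n$ for some $c(\lambda)>0$ together with an $O(n)$ bound on $\RGD(A)$.

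The difference lies only in the quantitative input for this last step. The paper does not attempt to identify the limit of $\SDP(A)/n$; it simply invokes \cite[Theorem~5]{montanari2016semidefinite}, which gives the existence of $\Delta(\lambda)>0$ with $\SDP(A)/n\ge 2+\Delta(\lambda)$ for $\lambda>1$, together with $\SDP(\pm W)/n\le 2+\delta$. It also bounds $\SDP(-A)\le\SDP(-W)$ directly (using $u^\sT X u\ge 0$ for $X\succeq 0$), which is sharper than your operator-norm bound on $\RGD(A)$, though either suffices. Your route is more ambitious: you assert the explicit limit $\SDP(A)/n\to\lambda+1/\lambda$, which would yield explicit $k_*(\lambda)$ and $\eps(\lambda)$ that the paper does not provide. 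The BBP upper bound is immediate, but the matching lower bound via ``delocalization of $v_1$ to turn $n\,v_1v_1^\sT$ into an elliptope element with $o(n)$ loss'' is a genuine technical step that you flag but do not carry out; note in particular that any rank-one renormalization of $v_1v_1^\sT$ to have unit diagonal collapses to $\mathrm{sign}(v_1)\mathrm{sign}(v_1)^\sT$, so the construction must be more delicate than a naive rescaling. The paper avoids this obstacle entirely by citing the black-box result, at the cost of non-explicit constants.
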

The proof of this theorem is deferred to Section \ref{sec:proof_thm4}.

Note that this guarantee is weaker than the one of \cite{montanari2016semidefinite}, which also presents an explicit rounding scheme to
obtain an estimator $\hu\in\{+1,-1\}^n$. However, we expect that the techniques of  \cite{montanari2016semidefinite} should be generalizable to
the present setting. A simple rounding scheme takes the sign of principal left singular vector of $\sigma$. 
We will use this estimator in our numerical experiments in Section \ref{numsim}. 

This theorem can be compared with the one of \cite{bandeira2016low} which uses $k=2$ but requires $\lambda > 8$. As a side result which improves over \cite{bandeira2016low} for $k = 2$, we obtain the following lower bound on the correlation for any $k \geq 2$. 
\begin{theorem}\label{thm:correlationlimit}
For any $k \geq 2$, the following holds almost surely
\begin{align}
\liminf_{n\rightarrow \infty} \inf_{\sigma \in \Crit_{n,k}} \frac{1}{n^2} \Vert \sigma^\sT u \Vert_2^2& \geq 1 - \min\left(\frac{16}{\lambda}, \frac{1}{k}  + \frac{4}{\lambda}\right)\, .
\end{align}
\end{theorem}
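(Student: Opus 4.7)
The statement gives the minimum of two bounds, and I would establish each separately: the $1/k+4/\lambda$ bound follows directly from Theorem~\ref{thm:apprxconcavepoint}, while the $16/\lambda$ bound (relevant only when $\lambda > 12k$, i.e., in the regime where the Grothendieck bound degrades) requires a separate spectral argument. Both arguments start from the decomposition
\begin{align*}
\func(\sigma) = \langle A(\lambda), \sigma\sigma^\sT\rangle = \frac{\lambda}{n}\|\sigma^\sT u\|_2^2 + \langle W_n, \sigma\sigma^\sT\rangle,
\end{align*}
valid at any $\sigma \in \cM_k$, which reduces the task to lower-bounding $\func(\sigma)$ and upper-bounding the noise term $\langle W_n, \sigma\sigma^\sT\rangle$.

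For the Grothendieck bound, I fix any local maximizer $\sigma^* \in \Crit_{n,k}(A)$ and apply Theorem~\ref{thm:apprxconcavepoint} with $\varepsilon = 0$, obtaining $\func(\sigma^*) \geq \SDP(A) - \RGD(A)/(k-1)$. I then plug in a.s.\ bounds from standard GOE concentration ($\|W_n\|_{\op} \to 2$, $u^\sT W_n u = o(n)$): (i) taking $X = uu^\sT$ yields $\SDP(A) \geq \lambda n + u^\sT W_n u = \lambda n - o(n)$ a.s.; (ii) since $\langle uu^\sT, X\rangle \geq 0$ for every $X \succeq 0$ in the elliptope, $\SDP(-A) \leq \SDP(-W_n) \leq n\|W_n\|_{\op}$; and (iii) since $\sigma^*\sigma^{*\sT}$ itself lies in the elliptope, $\langle W_n, \sigma^*\sigma^{*\sT}\rangle \leq \SDP(W_n) \leq n\|W_n\|_{\op}$. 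Substituting these into the decomposition of $\func(\sigma^*)$, dividing by $\lambda n^2$, and taking $\liminf$ produces $\|\sigma^{*\sT}u\|_2^2/n^2 \geq 1 - 1/(k-1) - (1 + O(1/\lambda))\cdot(\textup{constants})$, which after accounting for the $k-1$ vs $k$ cosmetic yields the $1 - 1/k - 4/\lambda$ bound.

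For the $16/\lambda$ bound, Theorem~\ref{thm:apprxconcavepoint} cannot be used (it is vacuous when $k=2$, and loose for small $k$). Instead, I work directly from the KKT conditions: the first-order stationarity rewrites as $(\Lambda - W_n)\sigma^* = (\lambda/n)\,u\,(u^\sT\sigma^*)$, where $\Lambda = \ddiag(A\sigma^*\sigma^{*\sT})$. The second-order condition reads $\langle v, (A-\Lambda)v\rangle \leq 0$ for all tangent $v \in T_{\sigma^*}\cM_k = \{v: \sigma^{*\sT}_i v_i = 0, \,\forall i\}$. I would test the second-order inequality against tangent directions of the form $v_i = \phi_i(c - \langle c,\sigma^*_i\rangle\sigma^*_i)$, with $\phi \in \R^n$ the top eigenvector of $W_n$ (or of $A$, exploiting the BBP-type alignment of that eigenvector with $u$ for $\lambda > 1$) and $c$ averaged uniformly over the unit sphere in $\R^k$. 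Combining the resulting inequality with the first-order identity and the GOE spectral bounds on $\|W_n\|_{\op}$ yields a lower bound on $\|\sigma^{*\sT}u\|_2^2/n^2$ with the explicit constant $16/\lambda$.

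The main obstacle is precisely this spectral argument. The difficulty is twofold: first, the diagonal multiplier $\Lambda$ depends on $\sigma^*$ through $\ddiag(A\sigma^*\sigma^{*\sT})$, so bounds on $\Lambda$ (e.g., $\sum_i \Lambda_{ii}^2 \leq n\|A\|_{\op}^2$) must be expressed in terms of quantities that can be controlled uniformly over $\Crit_{n,k}(A)$; second, the tangent-space constraint $\sigma^{*\sT}_i v_i = 0$ couples the $n$ rows and makes it nontrivial to embed the single direction $\phi$ into a tangent vector that still extracts useful information from the Hessian. Obtaining the sharp universal constant $16$ requires careful handling of the quadratic-in-$X_{ij}=\langle\sigma^*_i,\sigma^*_j\rangle$ terms that appear after averaging over $c$, and showing the resulting inequality holds a.s.\ uniformly in $\sigma^*$.
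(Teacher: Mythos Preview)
Your Part~(a) has a genuine gap: applying Theorem~\ref{thm:apprxconcavepoint} as a black box gives a factor $1/(k-1)$, not $1/k$, and this is \emph{not} cosmetic. For $k=2$ your bound is vacuous, and for every $k$ it is strictly weaker than the claimed $1-1/k-4/\lambda$. The paper does not invoke Theorem~\ref{thm:apprxconcavepoint} here; it re-runs the random-projection argument with the test direction $W_i=\proj_i^\perp g\,u_i$ built from the \emph{true} signal $u$ rather than from a generic SDP optimizer $V$. This makes the term $\sum_{ij}A_{ij}u_iu_j$ appear explicitly (equal to $n^2+\langle u,\Delta u\rangle$), and produces an extra positive term $\frac{1}{k}\sum_{ij}\langle\sigma_i,\sigma_j\rangle^2\ge n^2/k^2$ from the Cauchy--Schwarz bound $\|\sigma\sigma^\sT\|_F^2\ge (\tr\sigma\sigma^\sT)^2/k$. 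That extra $n^2/k^2$ is precisely what converts $(1-2/k)$ into $(1-1/k)^2$ and yields the stated $1/k$ constant.

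For Part~(b), your sketch diverges substantially from the paper's route and is not developed enough to see that it would land on the constant $16$. The paper's argument does not use the top eigenvector of $W_n$ at all. Instead it (i)~rotates $\sigma$ so its columns $v_1,\dots,v_k$ are mutually orthogonal; (ii)~reads the first-order condition \emph{entrywise} as $(Av_i)\circ v_j=(Av_j)\circ v_i$, which after isolating the rank-one signal yields $\langle\ones,v_i\rangle v_j-\langle\ones,v_j\rangle v_i=\frac{n}{\lambda}[(W_nv_j)\circ v_i-(W_nv_i)\circ v_j]$ and hence bounds $\langle\ones,v_j\rangle^2$ and $\|v_a\|_2^2$ for $a\ge 2$ by $O(n/\lambda^2)$; (iii)~tests the second-order condition with the deterministic tangent direction $\xi_i=e_a-\langle\sigma_i,e_a\rangle\sigma_i$ for a fixed coordinate $e_a\in\R^k$, $a\ge 2$, which after expansion produces $f(\sigma)\ge\langle\ones,A\ones\rangle+B_1+B_2+B_3$ with each $B_j$ controlled via the smallness of $\|v_a\|_2$. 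The entrywise first-order identity and the choice of $\xi$ along a single $\R^k$-coordinate are the two ideas your proposal is missing.
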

The proof is deferred to Section \ref{sec:proof_thm7}. Our lower bound converges to $1$ at large $\lambda$, which is the qualitatively correct behavior.

\subsection{Stochastic block model}

The planted partition problem (two-groups symmetric stochastic block model), is another well-studied statistical estimation problem that can be reduced to (\ref{SDP}) \cite{montanari2016semidefinite}. 
We write $G\sim \mathcal{G} (n,p,q)$ if $G$ is a graph over $n $ vertices generated as follows (for simplicity of notation, we assume $n$ even).  Let $u \in \lbrace \pm 1 \rbrace^n$ be a vector of labels that is uniformly random with $u^\sT \ones = 0$. Conditional on this partition,  edges are drawn independently with
\[
\mathbb{P}((i,j) \in E\vert u) = \left\{
    \begin{array}{ll}
        p, & \mbox{if } u_i = u_j, \\
        q, & \mbox{if } u_i \neq u_j.
    \end{array}
\right.
\]
We consider the case when $p = a/n$ and $q = b/n$ with $a,b=\cO(1)$, and $a > b$, and denote by  $d = (a+b)/2$ the average degree. A 
phase transition occurs as the following signal-to-noise parameter increases
\[
\lambda (a,b) \equiv \frac{a-b}{\sqrt{2(a+b)}} \, .
\]
For $\lambda>1$ there exists an efficient estimator that correlates with the true labels with high probability \cite{massoulie2014community,mossel2013proof}, 
whereas no  estimator exists below this threshold, regardless of its computational complexity \cite{mossel2015reconstruction}. 

The Maximum Likelihood Estimator of the vertex labels is given by
\begin{equation}\label{MLE}\tag{SBM-MLE}
\hu^{\sML}(G) = \arg\max\big\{\<x,A_G x\>:\; \;\; x\in\{+1,-1\}^n, \; \< x,\ones\> =0\big\}\, ,
\end{equation}
where $A_G$ is the adjacency matrix of the graph $G$. This optimization problem can again be attacked using the relaxation (\ref{SDP}), 
where $A = \oA_G\equiv (A_G - d/n \cdot \ones \ones^\sT )/\sqrt{d}$ is the scaled and centered adjacency matrix. 

In order to emphasize the relationship between this problem and $\mZt$ synchronization, we rewrite $\oA_G= (\lambda /n) u u^\sT + E$ where $E^\sT = E$ has zero mean and $(E_{ij})_{i <j}$ are independent with distribution
\[
E_{ij} = \left\{
    \begin{array}{ll}
        \frac{1}{\sqrt{d}} ( 1- p_{ij}), & \text{with probability } p_{ij}, \\
        -\frac{p_{ij}}{\sqrt{d}}, & \text{with probability } 1-p_{ij},
    \end{array}
\right.
\]
where $p_{ij} = a/n$ for $u_i = u_j$ and $p_{ij} = b/n$ for $u_i \neq u_j$. In analogy with Theorem \ref{thm:Z2sync1}, we have the following results on the rank-constrained 
approach to the two-groups stochastic block model. 
\begin{theorem}\label{thm:SBM1}
Consider the rank-$k$ non-convex SDP  (\ref{NcvxSDP}) with $A = \oA_G$ the centered, scaled adjacency matrix of graph  $G \sim \mathcal{G} (n, a/n, b/n)$.
For any $\lambda  = \lambda (a,b) > 1$, there exists an average degree $d_* (\lambda)$ and a  rank $k_*(\lambda)$, such that for any $d\geq d_* (\lambda)$ and $k \geq k_*(\lambda)$, 
with high probability, any local maximizer $\sigma$ has non-vanishing correlation with the true labels. Explicitly, there exists an $\eps = \eps(\lambda) > 0$ such that
\begin{align}
\lim_{n\to\infty}\prob\Big(
\inf_{\sigma \in \Crit_{n,k}} \frac{1}{n} \Vert \sigma^\sT u \Vert_2^2 \geq \eps\Big) = 1\, .
\end{align}
\end{theorem}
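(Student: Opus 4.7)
The plan is to repeat the proof of Theorem \ref{thm:Z2sync1}, with the sparse centered noise matrix $E$ in the decomposition $\oA_G = (\lambda/n) u u^\sT + E$ playing the role of the GOE matrix $W_n$. The hypothesis $d\geq d_*(\lambda)$ will enter only to control Grothendieck-type SDP quantities of $E$, and everything else is deterministic bookkeeping.

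First, I would apply Theorem~\ref{thm:apprxconcavepoint} with $\eps=0$ at any local maximizer $\sigma \in \Crit_{n,k}$ to get
\[
\func(\sigma) \;\geq\; \frac{k-2}{k-1}\,\SDP(\oA_G) \;-\; \frac{1}{k-1}\,\SDP(-\oA_G).
\]
Then I expand $\func(\sigma) = (\lambda/n)\|\sigma^\sT u\|_2^2 + \langle \sigma\sigma^\sT, E\rangle$ and bound the noise contribution by $\langle \sigma\sigma^\sT, E\rangle \leq \SDP(E)$ (since $\sigma\sigma^\sT$ lies in the elliptope). On the other side, plugging $X = u u^\sT$ into the MC-SDP gives $\SDP(\oA_G) \geq n\lambda + u^\sT E u$, and since $u^\sT X u \geq 0$ whenever $X\succeq 0$, one has $\SDP(-\oA_G) \leq \SDP(-E)$. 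Rearranging,
\[
\frac{\lambda}{n}\|\sigma^\sT u\|_2^2 \;\geq\; \frac{k-2}{k-1}\bigl(n\lambda + u^\sT E u\bigr) - \frac{1}{k-1}\SDP(-E) - \SDP(E).
\]

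The key technical input is control of the random quantities $u^\sT E u$, $\SDP(E)$ and $\SDP(-E)$. A direct Hoeffding argument gives $|u^\sT E u| = O_{\prob}(n/\sqrt d)$. The crucial estimate is the Grothendieck-type bound on the centered sparse adjacency matrix of the two-groups SBM established in \cite{guedon2016community,montanari2016semidefinite}: for every $\eta>0$ there exists $d_0(\eta)$ so that for all $d \geq d_0(\eta)$,
\[
\lim_{n\to\infty}\prob\bigl(\SDP(E) \leq \eta n \text{ and } \SDP(-E) \leq \eta n\bigr) \;=\; 1.
\]
This is the \emph{only} place where the large-degree hypothesis is used. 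Combining with the previous display, on a high-probability event
\[
\frac{\lambda}{n}\|\sigma^\sT u\|_2^2 \;\geq\; \frac{k-2}{k-1}\,n\lambda \;-\; 2\eta n .
\]
Choosing $k_*(\lambda)$ so that $(k-2)/(k-1) \geq 3/4$ for $k\geq k_*(\lambda)$, and taking $d_*(\lambda) = d_0(\lambda/8)$, yields $\|\sigma^\sT u\|_2^2 \geq n^2/2$ uniformly over $\sigma \in \Crit_{n,k}$, which is much stronger than the claimed $\|\sigma^\sT u\|_2^2/n \geq \eps(\lambda)$.

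The main obstacle is the sparse SDP estimate in the middle step. A naive operator-norm inequality $\SDP(E) \leq n\|E\|_{\op}$ does not yield an $o(n)$ bound because $\|E\|_{\op}$ does not tend to zero as $d\to\infty$; one really needs the non-commutative/sparse Grothendieck machinery of \cite{guedon2016community,montanari2016semidefinite}, which is tailored to entrywise-bounded centered matrices and trades $\SDP(E)$ against a cut-norm-type quantity that does vanish as $d$ grows. Once this is quoted as a black box, the remainder of the argument is identical in structure to the proof of Theorem~\ref{thm:Z2sync1}.
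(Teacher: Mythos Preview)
Your argument has a genuine gap at the step where you invoke \cite{guedon2016community,montanari2016semidefinite} to assert that $\SDP(\pm E) \le \eta n$ for \emph{every} $\eta>0$ once $d$ is large. This is false. The matrix $E$ has independent centered entries with variance of order $1/n$, so it behaves like a GOE matrix: what \cite{montanari2016semidefinite} actually proves (Theorem~8 there, and this is exactly what the paper quotes) is
\[
\Big|\frac{1}{n}\SDP(\pm E) - \frac{1}{n}\SDP(\pm W_n)\Big| \le C(\lambda)\,\frac{\log d}{d^{1/10}}\,,
\]
so that $\SDP(\pm E)/n$ converges to the same nonzero limit as $\SDP(\pm W_n)/n$, which is near $2$. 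The Gu\'edon--Vershynin bound likewise gives $\SDP(E)=O(n)$, not $o(n)$.

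With the correct input $\SDP(\pm E)/n \approx 2$ your inequality becomes, after sending $k\to\infty$,
\[
\frac{\lambda}{n^2}\|\sigma^\sT u\|_2^2 \;\gtrsim\; \lambda - 2,
\]
which is only useful for $\lambda>2$, not for all $\lambda>1$. The culprit is your lower bound $\SDP(\oA_G)\ge n\lambda + u^\sT E u$: plugging the rank-one witness $uu^\sT$ loses precisely the information that distinguishes $\lambda>1$ from $\lambda>2$. The paper instead transfers the sharp spiked-GOE phase transition $\SDP\bigl((\lambda/n)uu^\sT+W_n\bigr)/n \ge 2+\Delta(\lambda)$ with $\Delta(\lambda)>0$ for every $\lambda>1$ (Theorem~5 of \cite{montanari2016semidefinite}) over to $\SDP(\oA_G)$ via the comparison above. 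That extra $\Delta(\lambda)$ on top of the baseline $2$ is exactly what survives after subtracting $\SDP(E)/n\approx 2$, and is what makes the argument work down to the threshold $\lambda_c=1$.
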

The proof of this theorem can be found in Section \ref{sec:proof_sbm}. As mentioned above, efficient algorithms that estimate the hidden partition better than random guessing 
for $\lambda>1$ and any $d>1$ have been developed, among others, in \cite{massoulie2014community,mossel2013proof}. However, we expect
 the optimization approach (\ref{NcvxSDP}) to share some of the robustness properties of semidefinite programming \cite{moitra2016robust}, while scaling well to large instances.

\subsection{$\SO(d)$ synchronization}

In $\SO(d)$ synchronization we would like to estimate $m$ matrices $R_1,\dots,R_m$ in the special orthogonal group
\[
\SO(d) = \{ R \in \R^{d \times d}: R^\sT R = \id_d,  \det(R) = 1 \} \, , 
\]
from noisy measurements of the pairwise group differences $A_{ij} = R_i^{-1} R_j + W_{ij}$ for each pairs $(i,j) \in [m] \times [m]$. Here $A_{ij} \in \reals^{d\times d}$ is a measurement, and $W_{ij}\in\reals^{d\times d}$ is
noise.

The Maximum Likelihood Estimator for recovering the group elements $R_i \in \SO(d)$ solves the problem of the form
\[
\max_{\sigma_1\dots \sigma_m\in \SO(d)} \sum_{i,j=1}^m \<\sigma_i,A_{ij}\sigma_j\>\, , 
\]
which can be relaxed to the Orthogonal-Cut SDP (\ref{OC-SDP}). The non-convex rank-constrained approach fixes  $k> d$, and solves the problem (\ref{Ncvx-OC-SDP}). 
This is a smooth optimization problem with objective function $\func(\sigma) = \langle \sigma, A \sigma \rangle$ over the manifold $\cM_{o,d,k}= O(d,k)^m$,
where $O(d,k) =\{\sigma\in\reals^{k\times d}:\; \sigma^{\sT}\sigma = \id_d\}$ is the set of $k\times d$ orthogonal matrices.
We also denote the maximum value of the SDP  (\ref{OC-SDP}) by
\begin{align}
\SDP_o(A) = \{\langle A, X \rangle : \;\;X \succeq 0, \; X_{ii} = \id_d, \; i \in [m] \}.
\end{align}

In analogy with the MaxCut SDP, we  obtain the  following Grothendieck-type inequality.
\begin{theorem}\label{thm:SOd_grothendieck}
For an $\eps$-approximate concave point $\sigma \in \cM_{o,d,k}$ of the rank-$k$ non-convex Orthogonal-Cut SDP problem (\ref{Ncvx-OC-SDP}), we have
\begin{align}
\func( \sigma) \geq \SDP_o(A) - \frac{1}{k_d-1} (\SDP_o(A) + \SDP_o(-A)) - \frac{n}{2}\eps
\end{align}
where $k_d = 2k/(d+1)$.
\end{theorem}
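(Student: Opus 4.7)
The plan is to follow the template of Theorem~\ref{thm:apprxconcavepoint} with the vector-valued spherical rounding replaced by a matrix-valued Stiefel rounding. Let $X^\star$ be a maximizer of (\ref{OC-SDP}), and write $X^\star = Y Y^\sT$ with $Y\in \mR^{n\times D}$; partition the rows of $Y$ into $m$ blocks $Y_i \in \mR^{d\times D}$, so that $Y_i Y_i^\sT = \id_d$. I would then draw a Gaussian matrix $G \in \mR^{k\times D}$ with i.i.d.\ $\normal(0,1)$ entries and define candidate matrices $\tilde Z_i = G Y_i^\sT \in \mR^{k\times d}$. The Wick identity $\E[G^\sT G] = k\,\id_D$ yields $\E[\tilde Z_i^\sT \tilde Z_j] = k\, Y_i Y_j^\sT = k\, X^\star_{ij}$, so each $\tilde Z_i$ is an ``approximate Stiefel point'' whose cross-block covariance matches $X^\star$.

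Since $\tilde Z_i$ does not lie exactly on $O(d,k)$, I would project it onto the Stiefel tangent space at $\sigma_i$ by
\[
U_i \;=\; \tilde Z_i \;-\; \sigma_i\,\mathrm{sym}(\sigma_i^\sT \tilde Z_i),
\]
and assemble $U = (U_1,\dots,U_m) \in T_\sigma \cM_{o,d,k}$. Applying the $\eps$-approximate concave condition, $2\langle U,(A-\Lambda)U\rangle\le \eps \|U\|_F^2$, and taking expectation over $G$ gives
\[
\E\langle U,AU\rangle \;\le\; \E\langle U,\Lambda U\rangle \;+\; \tfrac{\eps}{2}\,\E\|U\|_F^2.
\]
The core computation then converts each expectation into an SDP-interpretable quantity. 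A direct check shows that the entries of $\sigma_i^\sT \tilde Z_i$ are i.i.d.\ $\normal(0,1)$, so $\E\|U_i\|_F^2 = kd - d(d+1)/2 = d(d+1)(k_d-1)/2$ and hence $\E\|U\|_F^2 = n(d+1)(k_d-1)/2$. The cross-block terms in $\E\langle U,AU\rangle$ are governed by $\E[\tilde Z_i^\sT \tilde Z_j]=kX^\star_{ij}$ and produce the leading term proportional to $k\,\SDP_o(A)$, while the diagonal terms together with $\E\langle U,\Lambda U\rangle$ yield a contribution proportional to $k\,f(\sigma)$, each modified by symmetrization corrections coming from the projection. Matching the coefficients — in particular the ratio $d(d+1)(k_d-1)/(2kd) = (k_d-1)/k_d$ — produces the $1/(k_d-1)$ gap, with the $\SDP_o(-A)$ term arising from the universal lower bound $\langle A,X\rangle \ge -\SDP_o(-A)$ used to absorb a residual negative contribution.

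The main obstacle is the matrix-valued Wick computation. In the spherical setting of Theorem~\ref{thm:apprxconcavepoint}, the correction $\langle \tilde z_i,\sigma_i\rangle \sigma_i$ is a rank-one, scalar-coefficient projection whose moments are elementary; here $\sigma_i\,\mathrm{sym}(\sigma_i^\sT \tilde Z_i)$ is a rank-$d$ correction in which the symmetrization couples $\sigma_i^\sT \tilde Z_i$ with $\tilde Z_i^\sT \sigma_i$. Organizing the resulting four-index Gaussian contractions into the three blocks $\E\|U\|_F^2$, $\E\langle U,AU\rangle$ and $\E\langle U,\Lambda U\rangle$, verifying that the symmetrization terms add with the correct sign, and tracking how the arithmetic produces the denominator $d+1$ that defines $k_d = 2k/(d+1)$, is the principal algebraic hurdle and the step in which the Stiefel geometry genuinely differs from the sphere.
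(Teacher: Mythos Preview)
Your outline is essentially the paper's proof: the paper also projects $GY^\sT$ onto $T_\sigma\cM_{o,d,k}$ via $U_i=\tilde Z_i-\sigma_i\,\mathrm{sym}(\sigma_i^\sT\tilde Z_i)$, takes expectations in the concavity inequality, and arrives at the same $(k_d-1)$ factor from $\E\|U_i\|_F^2=kd-d(d+1)/2$. Your moment computation for $\E\|U\|_F^2$ is correct.

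One point you should sharpen, because it is where the real work hides. You write that the $\SDP_o(-A)$ term comes from ``the universal lower bound $\langle A,X\rangle\ge -\SDP_o(-A)$ used to absorb a residual negative contribution.'' That bound is only available once you know the residual has the form $\langle A,\bar X\rangle$ for some \emph{feasible} $\bar X$ (i.e.\ $\bar X\succeq 0$ and $\bar X_{ii}=\id_d$). In the spherical case of Theorem~\ref{thm:apprxconcavepoint} this was the observation that $\bar X_{ij}=\langle v_i,v_j\rangle\langle\sigma_i,\sigma_j\rangle^2$ lies in the elliptope, which is immediate since it is a Schur product. In the Stiefel case the residual matrix that falls out of the Wick calculus is considerably more complicated --- it involves the symmetrization cross-terms you mention --- and its feasibility is not automatic: positive semidefiniteness follows from recognizing it as a Gram-type matrix, but checking that each diagonal $d\times d$ block equals $\id_d$ is a genuine block-index computation that produces the factor $(d+1)/2$ a second time. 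The paper isolates this as a separate step; you should expect to do the same rather than treat it as a throwaway line.
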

The proof of this theorem is a generalization of the proof of Theorem \ref{thm:apprxconcavepoint}, and is deferred to Section \ref{sec:proof_sod}. 
%


\section{Proof of Theorem \ref{thm:apprxconcavepoint}}

In this section we present the proof of Theorem \ref{thm:apprxconcavepoint}, while deferring other proofs to Section \ref{sec:proofs}. Notice that the present proof is simpler and provides a tighter bound with respect to 
the one of \cite{montanari2016grothendieck}. Before passing to the actual proof, we make a few remarks about the geometry of optimization on $\cM_k$.

\subsection{Geometry of the manifold $\cM_k$}\label{sec:geometry}

The set $\mathcal{M}_k$ as defined in (\ref{eq:Mkdef}) is a smooth submanifold of $ \mathbb{R}^{n\times k}$. We endow $\mathcal{M}_k$ with the Riemannian geometry induced by the Euclidean space $\R^{n \times k}$. At any point $\sigma \in \mathcal{M}_k$, the tangent space is obtained by taking the differential of the equality constraints
\[
T_{\sigma} \mathcal{M}_k = \big\lbrace u \in \mathbb{R}^{n\times k}:u = \left( u_1,u_2, \ldots, u_n \right)^\sT ,  \left\langle u_i , \sigma_i \right\rangle = 0, i \in [n] \big\rbrace \, .
\]
In words, $T_{\sigma}\cM_k$ is the set of matrices $u\in \R^{n\times k}$ such that each row $u_i$ of $u$ is orthogonal to the corresponding row $\sigma_i$
of $\sigma$. Equivalently, $T_{\sigma} \mathcal{M}_k$ is the direct product of the tangent spaces of the $n$ unit spheres $\mathbb S^{k-1}\subseteq\R^k$ at $\sigma_1$,\dots, $\sigma_n$. 
Let $\proj^\perp$ be the orthogonal projection operator from $\R^{n \times k}$ onto $T_{\sigma} \mathcal{M}_k$. We have
\[
\begin{aligned}
\proj^\perp (u) &= \big( \proj^\perp_1 (u_1),\ldots, \proj^\perp_n (u_n)  \big)^\sT \\
& =  \big( u_1 - \<\sigma_1,u_1\>\sigma_1, \ldots ,u_n - \<\sigma_n,u_n\>\sigma_n \big)^\sT \\
& =  u - \text{ddiag} \big( u \sigma^\sT \big) \sigma,
\end{aligned}
\]
where we denoted by $\ddiag : \R^{n \times n} \rightarrow \R^{n \times n}$ the operator on the matrix space that sets all off-diagonal entries to zero. 

In problem (\ref{NcvxSDP}), we consider the cost function $\func( \sigma ) = \langle \sigma , A \sigma \rangle$ on the submanifold $\cM_k$. At $\sigma \in \cM_k$, we denote $\nabla \func( \sigma)$ and $\tgrad \func( \sigma)$ 
respectively the Euclidean gradient in $\R^{n \times k }$ and the Riemannian gradient of $\func$. 
The former is $\nabla f(\sigma) = 2A\sigma$, and the latter  is the projection of the first onto the tangent space:
\[
\tgrad \func( \sigma) = \proj^\perp (\nabla \func( \sigma)) = 2 \Big(A - \ddiag \big(A\sigma\sigma^\sT \big) \Big) \sigma\, .
\]
We will write $\Lambda = \Lambda(\sigma) = \text{ddiag} \left(A \sigma \sigma^\sT \right)$ and often drop the dependence on $\sigma$ for simplicity. 
At $\sigma \in \cM_k$, let $\nabla^2 \func( \sigma)$ and $\thess \func( \sigma)$ be respectively the Euclidean and the Riemannian Hessian of $\func$. The Riemannian Hessian is a symmetric operator on the tangent space and is given by projecting the directional derivative of the gradient vector field (we use $D$ to denote the directional derivative):
\[
\forall u \in T_{\sigma} \mathcal{M}_k, \quad \thess \func( \sigma) [u] = \proj^\perp \left( D \tgrad \func( \sigma) [u] \right) = \proj^\perp \left[ 2(A - \Lambda)u - 2 \ddiag \left( A \sigma u^\sT + A u \sigma^\sT \right) \sigma \right].
\]
In particular, we will use the following identity
\begin{align}\label{eq:hess_expression}
\forall u,v \in T_{\sigma}\cM_k, \quad
\langle v, \thess \func( \sigma)[u] \rangle = 2 \langle v, (A - \Lambda ) u \rangle,
\end{align}
where we used that the projection operator $\proj^\perp$ is self-adjoint and $\langle v_i, \sigma_i \rangle = 0$ by definition of the tangent space.
We observe that the Riemannian Hessian has a similar interpretation as in Euclidean geometry, namely it provides a second order approximation of the function $f$ in a neighborhood of $\sigma$. 

\subsection{Proof of Theorem  \ref{thm:apprxconcavepoint}}

Let $\sigma$ be an $\eps$-approximate concave point of $\func( \sigma)$ on $\cM_k$.  Using the definition and Equation (\ref{eq:hess_expression}), we have
(for $\Lambda  = \text{ddiag}(A \sigma \sigma^\sT )$)
\begin{align} \label{eq:apprx}
\forall u \in T_{\sigma}\cM_k, \quad \langle u, (\Lambda  - A) u \rangle \geq -\frac{1}{2}\eps \langle u, u \rangle\, .
\end{align}

Let $V = [v_1, \ldots, v_n]^\sT \in \mathbb R^{n\times n}$ be such that $X = V V^\sT$ is an optimal solution of (\ref{SDP}) problem. Let $G \in \mathbb R^{k\times n}$ be a random matrix with independent entries $G_{ij} \sim \normal(0, 1/k)$,
and denote by $\proj_i^\perp = \id_k - \sigma_i  \sigma_i^\sT \in \mathbb R^{k \times k}$ the projection onto the subspace orthogonal to $\sigma_i $ in $\R^k$. 

We use $G$ to obtain a random projection  $W = [\proj_1^\perp G v_1,  \ldots, \proj_n^\perp G v_n]^\sT \in T_{\sigma}\cM_k$. From (\ref{eq:apprx}), we have
\[
\E \langle W, (\Lambda  -A) W\rangle \geq -\frac{1}{2}\eps\,  \E \langle W, W\rangle, 
\]
where the expectation is taken over the random matrix $G$. 

The left hand side of the last equation gives
\begin{align*}
&\E \langle W, (\Lambda  - A) W \rangle\\
=& \E \sum_{i, j =1}^n (\Lambda  - A)_{ij} \langle \proj_i^\perp G v_i, \proj_j^\perp G v_j\rangle\\
=& \E\sum_{i, j =1}^n (\Lambda  - A)_{ij} \left\langle \proj_i^\perp G \sum_{s=1}^n v_{is} e_s, \proj_j^\perp G \sum_{t=1}^n v_{jt} e_t \right\rangle\\
=& \sum_{i, j =1}^n (\Lambda  - A)_{ij} \sum_{s,t=1}^n v_{is} v_{jt} \E [\langle \proj_i^\perp G e_s, \proj_j^\perp G e_t\rangle]\\
=& \sum_{i, j =1}^n (\Lambda  - A)_{ij} \sum_{s,t=1}^n v_{is} v_{jt} \delta_{st} \frac{1}{k} \tr(\proj_i^\perp \proj_j^\perp)\\
=& \sum_{i, j =1}^n (\Lambda  - A)_{ij} \langle v_i, v_j \rangle \frac{1}{k} \tr \left( \id_k - \sigma_i \sigma_i^\sT - \sigma_j \sigma_j^\sT + \sigma_i \sigma_i^\sT\sigma_j \sigma_j^\sT \right)\\
=& \sum_{i, j =1}^n (\Lambda  - A)_{ij} \langle v_i, v_j \rangle \left(1 - \frac{2}{k} + \frac{1}{k} \langle \sigma_i, \sigma_j \rangle ^2 \right)\\
=& \left(1-\frac{1}{k}\right)\tr(\Lambda ) - \left(1-\frac{2}{k}\right) \SDP(A) - \frac{1}{k}\sum_{i,j=1}^n A_{ij} \langle v_i, v_j \rangle \, (\langle \sigma_i, \sigma_j \rangle)^2, 
\end{align*}
whereas the right hand side verifies
\[
\E \langle W, W\rangle = \E\sum_{i=1}^n \langle \proj_i^\perp G v_i, \proj_i^\perp G v_i \rangle = \sum_{i=1}^n \left(1- \frac{2}{k} + \frac{1}{k} \Vert \sigma_i \Vert^2_2 \right) = \left(1-\frac{1}{k}\right) n. 
\]

Note that $\tr(\Lambda) = \func( \sigma)$. Crucially, if we let $\barX_{ij} = \langle v_i, v_j \rangle (\langle \sigma_i, \sigma_j \rangle)^2$, we have $\barX_{ii} = 1$ and $\barX \succeq 0$.
 Thus we have $\SDP(-A) \geq \langle -A, \barX \rangle $. Therefore, we have
\[
\left(1-\frac{1}{k}\right) \func( \sigma) - \left(1-\frac{2}{k}\right) \SDP(A) + \frac{1}{k} \SDP(-A) \geq -\frac{1}{2} \eps n \left(1-\frac{1}{k} \right).
\]
Rearranging the terms gives the conclusion.

\section{Numerical illustration}
\label{numsim}

\begin{figure}[ht]
\centering
\begin{minipage}{0.45\linewidth}
\centering
\includegraphics[width=0.95\textwidth]{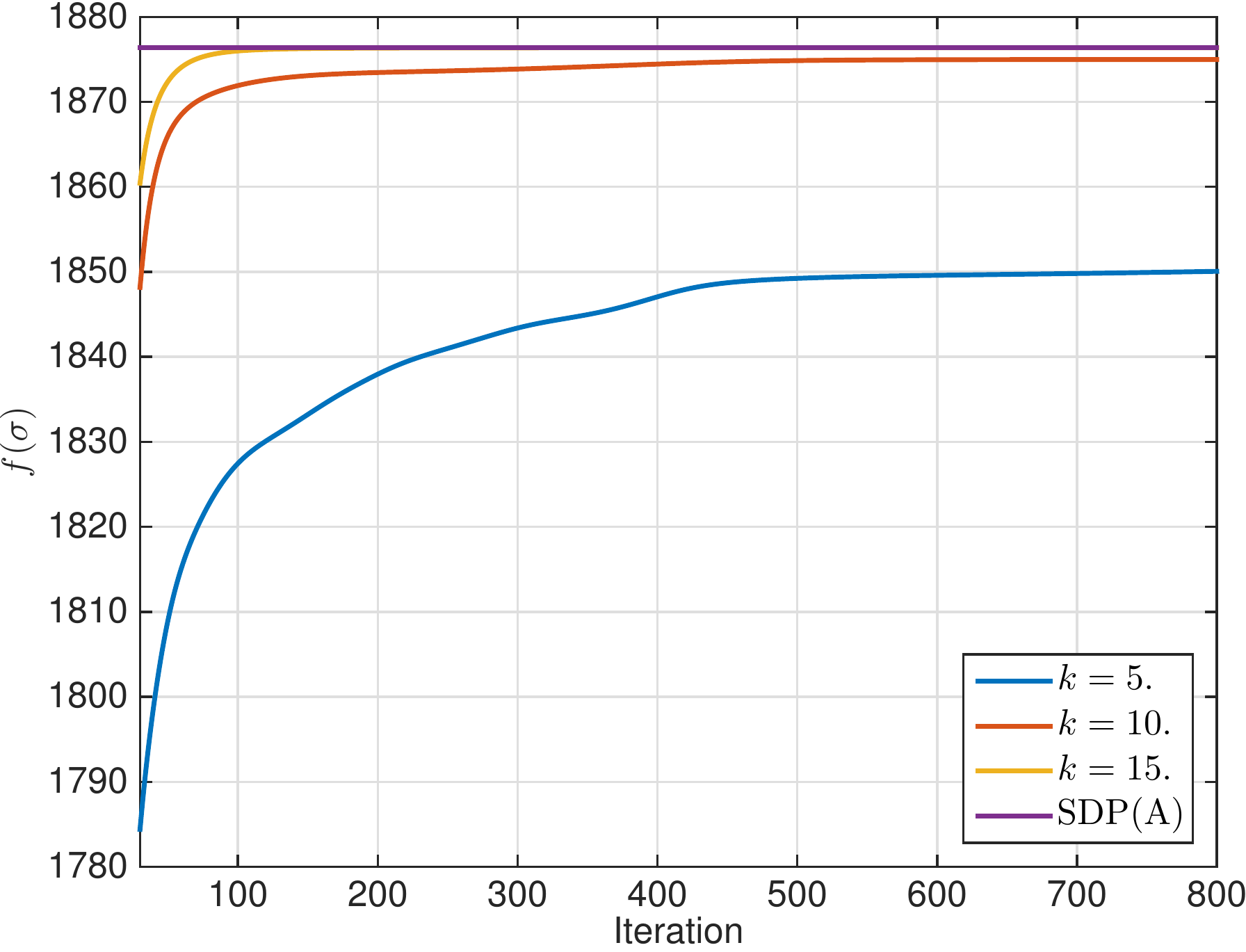}
(a)
\end{minipage}
\begin{minipage}{0.45\linewidth}
\centering
\includegraphics[width=0.95\textwidth]{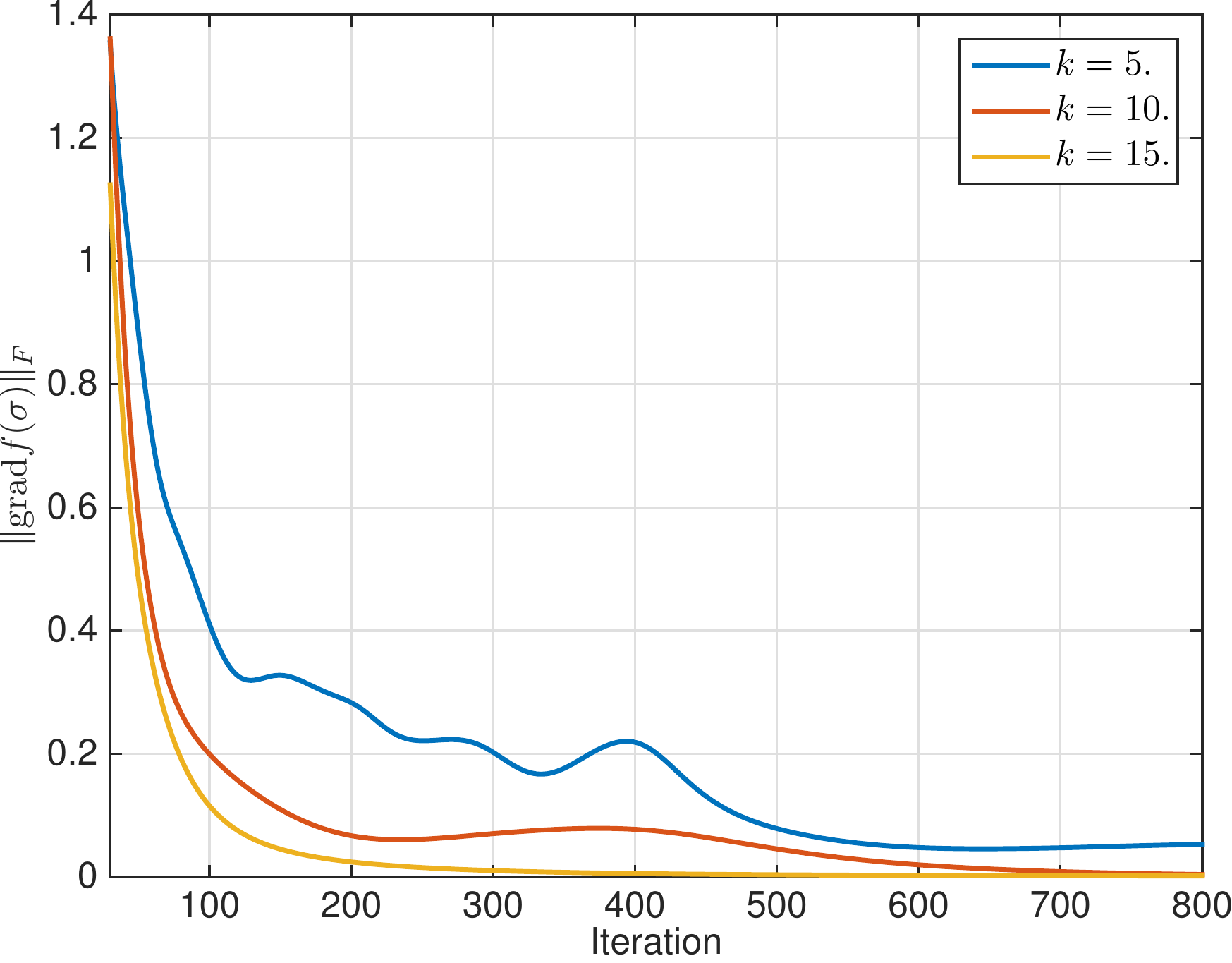}
(b)
\end{minipage}
\caption{Projected gradient ascent algorithm to optimize (\ref{SDP}) with $A \sim \text{GOE}(1000)$:  (a) $\func( \sigma)$ as a function of the iteration number for
a single realization of the trajectory; (b) $\Vert \tgrad \func( \sigma) \Vert_F$ as a function of the  iteration number. }\label{fig:convergence_gd}
\end{figure}

In this section we carry out some numerical experiments to illustrate our results. We also find interesting phenomena which are not captured by our analysis. 

Although Theorem \ref{thm:trustregion} provides a complexity bound for the Riemannian trust-region method (RTR), we observe that (projected) gradient ascent also converges very fast. That is, gradient ascent 
rapidly increases the objective function, is not trapped at a saddle point, and converges to a local maximizer eventually. In Figure \ref{fig:convergence_gd}, we take $A \sim \GOE(1000)$, and  use projected gradient ascent 
to solve the optimization problem (\ref{NcvxSDP}) with a random initialization and fixed step size. Figure \ref{fig:convergence_gd}a shows that the objective function increases
rapidly and converges within a small interval from the local maximum (which is upper bounded by the value $\SDP(A)$). Also the gap between the value obtained by this procedure and the value $\SDP(A)$ 
decreases rapidly with $k$.
 Figure \ref{fig:convergence_gd}b shows that the Riemannian gradient decreases very rapidly, but presents some non-monotonicity. We believe these bumps occur when the iterates 
are close to saddle points.

\begin{figure}[ht]
\centering
\begin{minipage}{0.45\linewidth}
\centering
\includegraphics[width=0.95\textwidth]{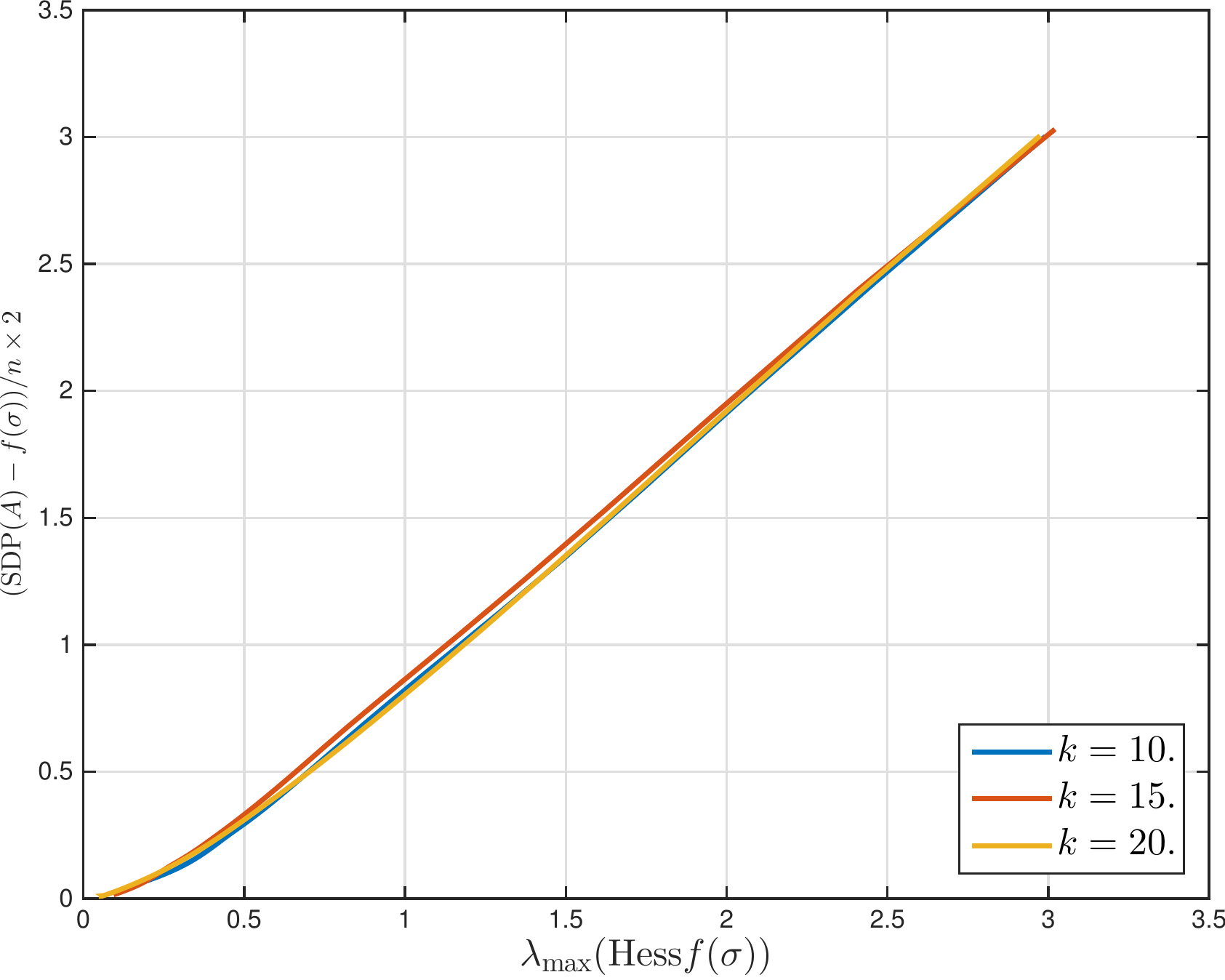}
(a)
\end{minipage}
\begin{minipage}{0.45\linewidth}
\centering
\includegraphics[width=0.95\textwidth]{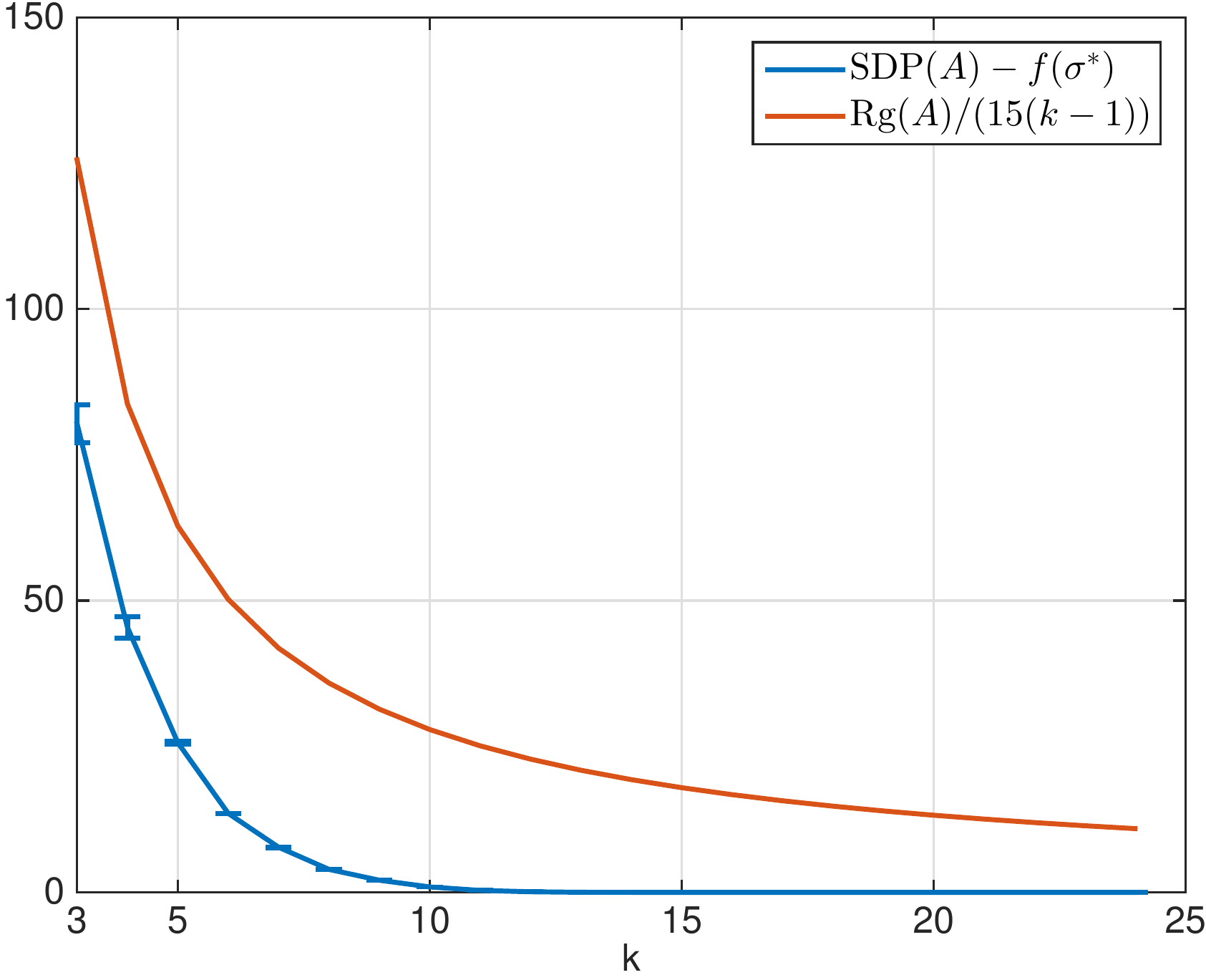}
(b)
\end{minipage}
\caption{Geometric properties of the rank-$k$ non-convex SDP, where $A \sim \text{GOE}(1000)$. (a). $\lambda_{\max}(\thess \func( \sigma))$ versus $(\SDP(A) - \func( \sigma))/n\times 2$. (b). $\SDP(A) - \func( \sigma^*)$ for different $k$, where $\sigma^* \in \cM_k$ is a local maximizer. }\label{fig:geometry}
\end{figure}

In Figure \ref{fig:geometry}, we examine some geometric properties of the rank-$k$ non-convex SDP. As above, we explore the landscape of this problem by projected gradient ascent.
 In Figure \ref{fig:geometry}a, we plot the curvature $\lambda_{\max}(\thess \func( \sigma))$ versus the gap from the SDP value $(\SDP(A) - \func( \sigma))/n\times 2$ along the iterations. 
When $\func( \sigma)$ is far from $\SDP(A)$, there is a linear relationship between these two quantities, which is consistent with Theorem \ref{thm:apprxconcavepoint}. In Figure \ref{fig:geometry}b, we plot the gap between 
$\SDP(A)$ and $\func( \sigma^*)$ for a local maximizer $\sigma^* \in \cM_k$ that is produced by projected gradient ascent, for different values of $k$. 
These data are averaged over $10$ realizations of the random matrix $A$. 
This gap converges to zero as $k$ gets large, and is upper bounded by the curve $\RGD(A)/(15 (k-1))$. This coincides with Theorem \ref{thm:apprxconcavepoint}, which predicts that this gap must be smaller than $\RGD(A)/(k-1)$. 
Note however that --in this case-- Theorem \ref{thm:apprxconcavepoint} is overly pessimistic, and the gap appears to decrease very rapidly with $k$.

\begin{figure}[ht]
\centering
\includegraphics[width=0.45\textwidth]{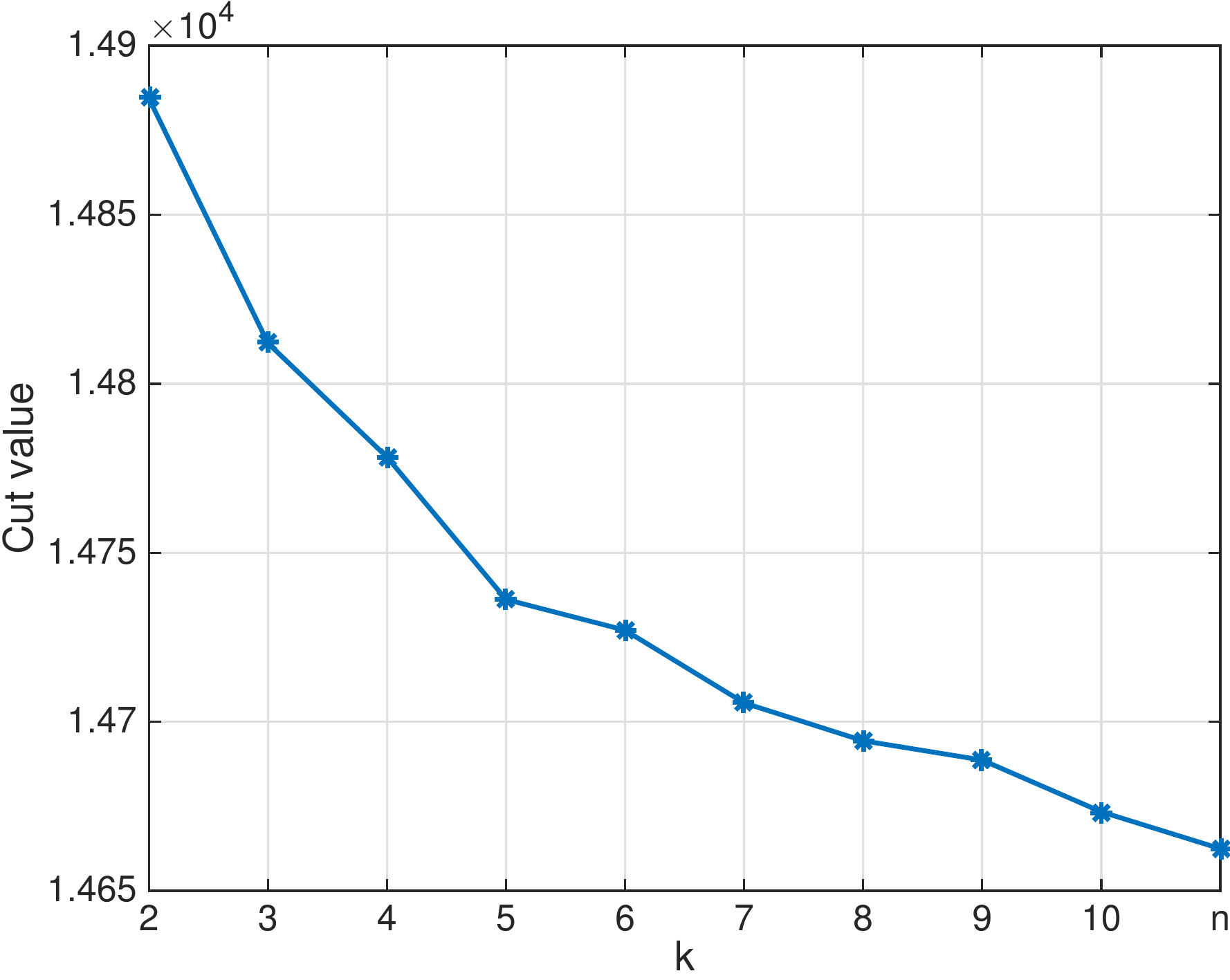}
\caption{Cut value found by rounding local maximizer of rank-$k$ non-convex SDP, for Erd\H{o}s-R\'{e}nyi random graphs with $n=1000$ and average degree $d=50$. Data are averaged over $10$ realizations. }\label{fig:MaxCut}
\end{figure}
Now we turn to study the MaxCut problem. Note that Theorem \ref{thm:MaxCut} gives a guarantee for the approximation ratio for the cut induced by any local maximizer of the rank-$k$ non-convex SDP (\ref{NcvxSDP}). 
In Figure \ref{fig:MaxCut}, we take the graph to be an Erd\H{o}s-R\'{e}nyi  graph with $n = 1000$ and average degree $d = 50$. We plot the cut value found by rounding the maximizer of the rank-$k$ non-convex SDP, 
for $k$ from $2$ to $10$, and also for $k = n$ which corresponds to the  (\ref{SDP}). Surprisingly, the cut value found by solving rank-$k$ non-convex problem is typically bigger than the cut value found by solving the original SDP. 
This provides a further reason to adopt the non-convex approach (\ref{NcvxSDP}). It appears to provide a significantly tight relaxation for random instances.

\begin{figure}[ht]
\centering
\begin{minipage}{0.45\linewidth}
\centering
\includegraphics[width=0.95\textwidth]{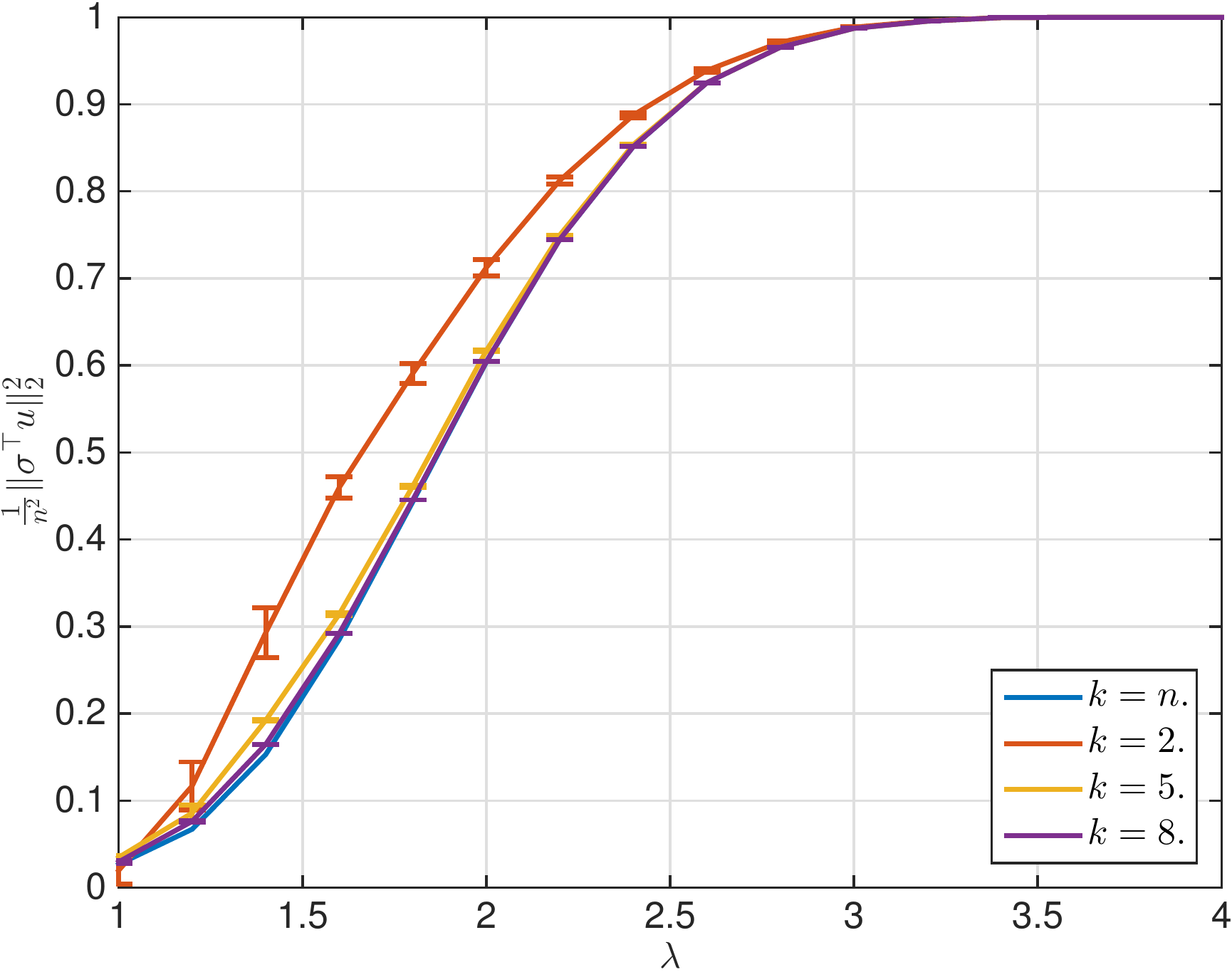}
(a)
\end{minipage}
\begin{minipage}{0.45\linewidth}
\centering
\includegraphics[width=0.95\textwidth]{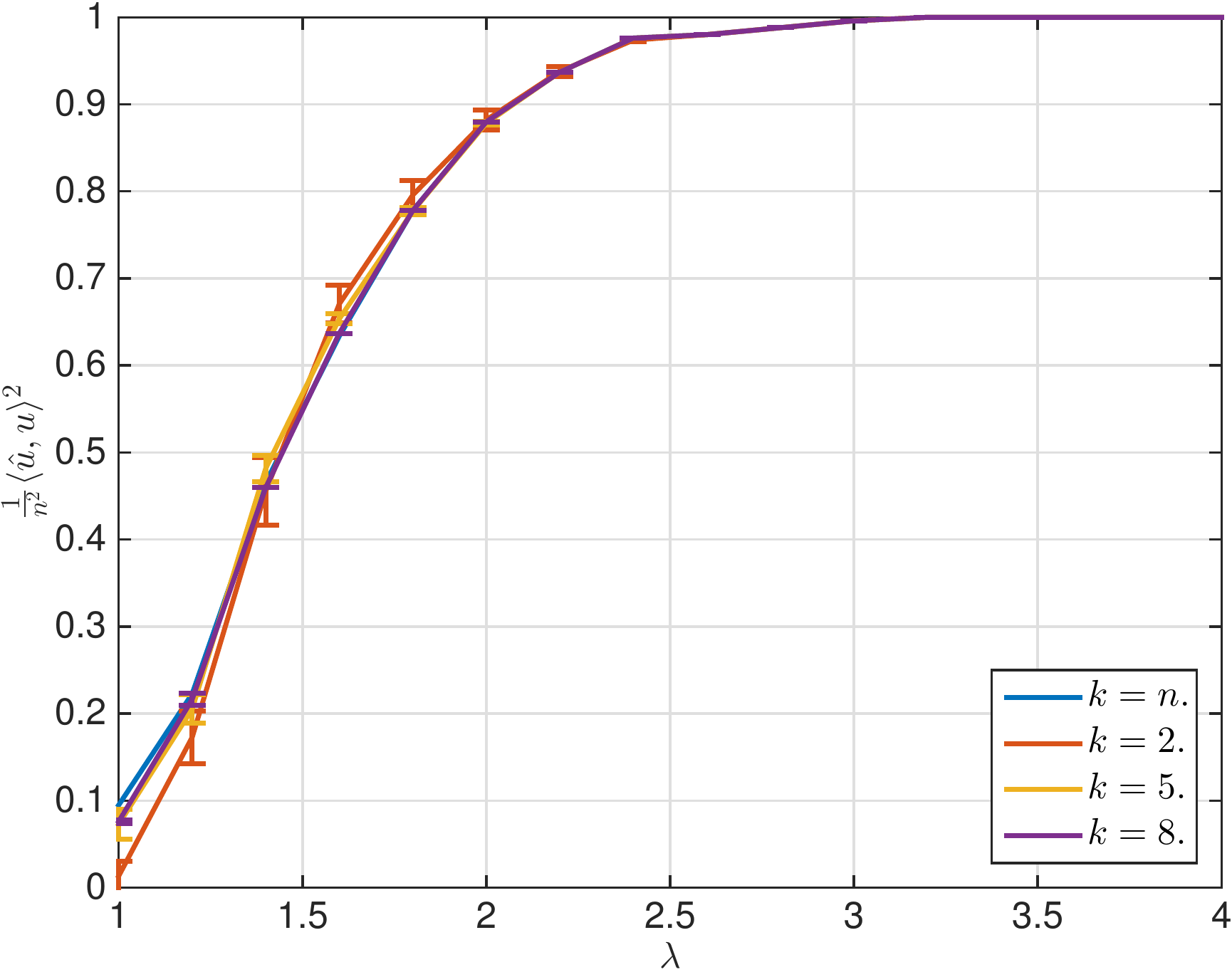}
(b)
\end{minipage}
\caption{$\mZt$ synchronization: correlation between estimator and ground truth $\Vert \sigma^\sT u \Vert_2^2/n^2$ and $\<\hat u^\sT ,u \>^2/n$ versus $\lambda$. }\label{fig:z2sync}
\end{figure}

In order to study $\mZt$ synchronization, we consider the matrix $A = (\lambda/n) u u^\sT + W_n$ where $W_n \sim \text{GOE}(n)$ for $n = 1000$. Figure \ref{fig:z2sync}a shows the correlation $\Vert \sigma^\sT u \Vert_2^2/n^2$ of a 
local maximizer $\sigma \in \cM_k$ produced by projected gradient ascent, with the ground truth $u$. In  Figure \ref{fig:z2sync}b 
we construct label estimates $\hu(A)  = \sign(v_1(\sigma))$ where $v_1(\sigma)$ is the principal left singular vector of $\sigma\in\reals^{n\times k}$.
We plot  the correlation $(\langle \hat u, u\rangle/n)^2$ as a function of $\lambda$. In both cases, results are averaged over $10$ realizations of the matrix $A$.
Surprisingly, the resulting correlation is strongly concentrated, despite  the fact that gradient ascent converges to a random local maximum  $\sigma \in \cM_k$.

\begin{figure}[ht]
\centering
\includegraphics[width=0.45\textwidth]{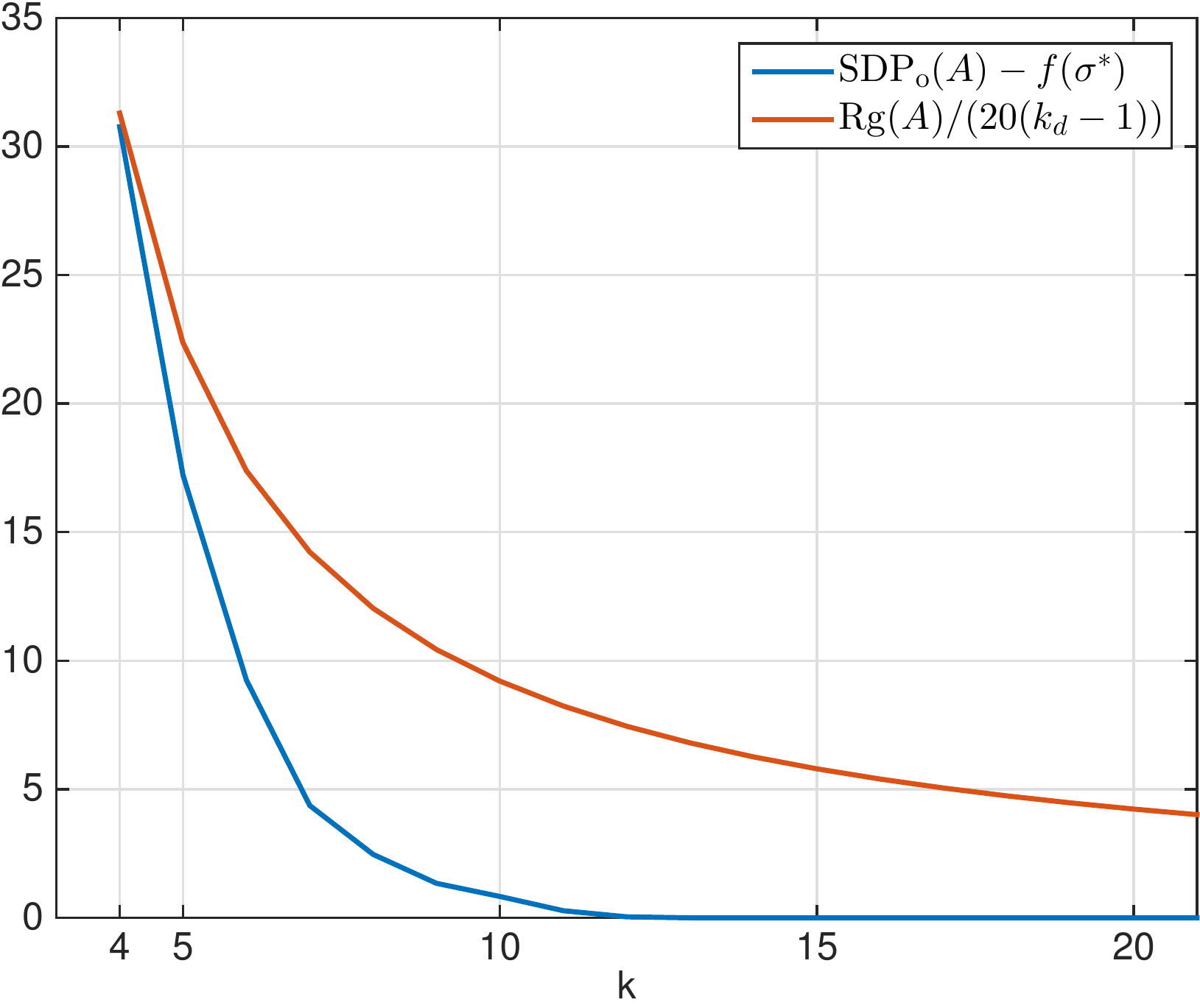}
\caption{$\SDP_o(A) - \func( \sigma^*)$ for different $k$, where $\sigma^* \in \cM_{o,d,k}$ is a local maximizer. }\label{fig:ocsdp}
\end{figure}

Finally, we turn to the $\SO(3)$ synchronization problem, and study the local maximizer of the Orthogonal-Cut SDP (\ref{OC-SDP}). We sample a matrix $A \sim \text{GOE}(300)$, and find the local maximum of the rank-$k$ non-convex Orthogonal-Cut SDP (\ref{Ncvx-OC-SDP}). In Figure \ref{fig:ocsdp} we plot the gap between $\SDP_o(A)$ and $\func( \sigma^*)$ for a local maximizer $\sigma^* \in \R^{n\times k}$ produced by
projected gradient ascent for different $k$. This gap converges to zero as $k$ is larger, and is upper bounded by  $\RGD(A)/(20 (k_d-1))$. This is in agreement with 
Theorem \ref{thm:SOd_grothendieck}, which predicts that the gap is smaller than $\RGD(A)/(k_d -1)$.


\section{Other proofs}\label{sec:proofs}

\subsection{Proof of Theorem \ref{thm:MaxCut}}\label{sec:proof_thm3}

Note that problem (\ref{eq:NcvxSDP})  is equivalent to problem (\ref{NcvxSDP}) with matrix $A = -A_G$. Applying Theorem \ref{thm:apprxconcavepoint},
 and noting that the elements of $A_G$ are non-negative, we for any local maximizer $\sigma^*$ of the problem (\ref{eq:NcvxSDP}), 
and any $X^*$ optimal solution of the SDP (\ref{eq:SDPCut}),
\begin{equation}
\begin{aligned}
\langle \sigma^*, -A_G \sigma^* \rangle \geq & \langle -A_G, X^* \rangle - \frac{1}{k-1}(\langle -A_G, X^* \rangle + \SDP(A_G))\\
= & \langle -A_G, X^* \rangle - \frac{1}{k-1}(\langle -A_G, X^* \rangle + \sum_{i,j=1}^n A_{G,ij}).
\end{aligned}
\end{equation}

Thus, we have
\begin{equation}
\begin{aligned}
&\frac{1}{4} \sum_{i,j=1}^n A_{G,ij} (1-\langle \sigma_i^*, \sigma_j^*\rangle) = \frac{1}{4} \sum_{i,j=1}^n A_{G,ij} + \frac{1}{4} \langle \sigma^*, -A_G \sigma^*\rangle \\
\ge & \frac{1}{4} \sum_{i,j=1}^n A_{G,ij} + \frac{1}{4}[\langle -A_G, X^* \rangle - \frac{1}{k-1}(\langle -A_G, X^* \rangle + \sum_{i,j=1}^n A_{G,ij})]\\
=& \left(1-\frac{1}{k-1}\right) \times \frac{1}{4} \sum_{i,j=1}^n A_{G,ij}(1 - X_{ij}^*) = \left(1-\frac{1}{k-1}\right) \times \text{SDPCut}(G) \\
\geq & \left(1-\frac{1}{k-1}\right)\times \text{MaxCut}(G). 
\end{aligned}
\end{equation}

Applying the randomized rounding scheme of \cite{goemans1995improved}, we sample a vector $u \sim \normal (\bzero, \id_k)$, and define $v \in \{\pm 1\}^n$ by
 $v_i = \sign(\langle \sigma_i^*, u\rangle)$, then we obtain
\[
\E\left[ \frac{1}{4} \sum_{i,j=1}^n A_{G,ij}(1-\langle v_i, v_j \rangle)\right] \geq \alpha_* \times \frac{1}{4} \sum_{i,j=1}^n A_{G,ij}(1-\langle \sigma_i^*, \sigma_j^* \rangle) \geq \alpha_* \times \left(1-\frac{1}{k-1}\right) \times \text{MaxCut}(G).
\]
Therefore, for any local maximizer $\sigma^*$, it gives an $\alpha_* \times (1-1/(k-1))$-approximate solution of the MaxCut problem. 

If $\sigma^*$ is an $\eps = 2\RGD(A_G) / (n (k-1))$-approximate concave point, using Theorem \ref{thm:apprxconcavepoint} and the same argument, we can prove that it gives an $\alpha_* \times (1-2/(k-1))$-approximate solution of the MaxCut problem.

\subsection{Proof of Theorem \ref{thm:Z2sync1}}\label{sec:proof_thm4}

Let $A(\lambda) = \lambda/n \cdot u u^\sT + W_n$.
For any local maximum $\sigma \in \Crit_{n,k}$ of the rank-$k$ non-convex MaxCut SDP problem, according to Theorem \ref{thm:apprxconcavepoint}, we have
\[
\func( \sigma) \geq \SDP(A(\lambda)) - \frac{1}{k-1}(\SDP(A(\lambda)) + \SDP(-A(\lambda))).
\]
Therefore
\begin{equation}\label{eq:nonvanishing1}
\begin{aligned}
\frac{\lambda}{n} \Vert \sigma^\sT u \Vert_2^2 \geq& \left(1-\frac{1}{k-1}\right) \SDP \left(\frac{\lambda}{n} u u^\sT + W_n \right) - \frac{1}{k-1} \SDP \left(-\frac{\lambda}{n} u u^\sT - W_n \right) - \left\langle W_n, \sigma \sigma^\sT \right\rangle\\
\geq & \left(1-\frac{1}{k-1}\right) \SDP\left(\frac{\lambda}{n} u u^\sT + W_n\right) - \frac{1}{k-1} \SDP\left( - W_n\right) - \SDP\left(W_n\right).
\end{aligned}
\end{equation}

Using the convergence of the SDP value as proved in \cite[Theorem 5]{montanari2016semidefinite}, for any $\lambda > 1$, there exists $\Delta (\lambda) >0$ such that, for any $\delta > 0$,
the following holds with high probability
\[
 \frac{1}{n}\SDP \left( \pm W_n \right)  \leq 2 + \delta, \quad \text{and} \quad 
 \frac{1}{n}\SDP \left( \frac{\lambda}{n} u u^\sT + W_n \right)  \geq 2 + \Delta(\lambda)\, .
\]

Therefore, we have with high probability
\begin{equation}\label{eq:an_equation_which_will_be_cited}
\begin{aligned}
\frac{1}{n^2} \Vert \sigma^\sT u \Vert_2^2 \geq& \frac{1}{\lambda} \left[ \big(2+\Delta(\lambda)\big) \times\Big(1-\frac{1}{k-1}\Big) - \left(1 + \frac{1}{k-1}\right) \times (2+\delta)\right] \\
=&  \left(1-\frac{1}{k-1}\right) \frac{\Delta(\lambda)}{\lambda}  - \frac{4 + \delta}{k-1} \cdot \frac{1}{\lambda} - \frac{\delta}{\lambda}. 
\end{aligned}
\end{equation}
Since $\Delta(\lambda) > 0$ for $\lambda >1$, there exists a $k_*(\lambda)$ such that the above expression is greater than $\eps$ for sufficiently small $\eps$ and $\delta$, which concludes the proof.

\subsection{Proof of Theorem \ref{thm:correlationlimit}}\label{sec:proof_thm7}

We decompose the proof into two parts. In part $(a)$, we prove that almost surely
\[
\liminf_{n \rightarrow \infty} \inf_{\sigma \in \C_{n,k}} \frac{1}{n^2}\Vert \sigma^\sT u \Vert_2^2 \geq 1 - \frac{1}{k} - \frac{4}{\lambda},
\]
using only the second order optimality condition. In part $(b)$, we incorporate the first order optimality condition and prove that as $\lambda \geq 12 k$, we have almost surely
\[
\liminf_{n \rightarrow \infty} \inf_{\sigma \in \C_{n,k}} \frac{1}{n^2}\Vert \sigma^\sT u \Vert_2^2 \geq 1 - \frac{16}{\lambda}. 
\]

\begin{proof}

\subsubsection*{Part $(a)$}

The proof of this part is similar to the proof of Theorem \ref{thm:apprxconcavepoint}. We replace the matrix $A$ by the expression $A = u u^\sT + \Delta$, where $u \in \{ \pm 1 \}^n$ and $\Delta = n/\lambda \cdot W_n$. Let $g \in \mathbb R^{k}$, $g \sim \normal(\bzero, 1/k \cdot \id_k)$, and $W = [\proj_1^\perp g u_1,  \ldots, \proj_n^\perp g u_n]^\sT \in T_{\sigma}\cM_k$, where $\proj_i^\perp = \id_k - \sigma_i \sigma_i^\sT \in \R^{k \times k}$. Due to the second order optimality condition, similar to the calculation in Theorem \ref{thm:apprxconcavepoint}, we have for any local maximizer $\sigma$ of the rank-$k$ non-convex SDP problem:
\[
0 \leq \E_g \langle W, (\Lambda(\sigma) - A) W \rangle 
= \left(1-\frac{1}{k}\right)f(\sigma) - \left(1-\frac{2}{k} \right) \sum_{i,j=1}^n A_{ij} u_i u_j - \frac{1}{k}\sum_{i,j=1}^n A_{ij} u_i u_j \langle \sigma_i, \sigma_j \rangle^2. 
\]
Plugging in the expression of $A$, we obtain
\[
\left(1- \frac{1}{k}\right) \langle u u^\sT + \Delta, \sigma \sigma^\sT \rangle - \left(1-\frac{2}{k}\right)(n^2 + \langle u, \Delta u\rangle) - \frac{1}{k}\sum_{i,j=1}^n \left[ \langle \sigma_i, \sigma_j \rangle^2 + \Delta_{ij} u_i u_j \langle \sigma_i, \sigma_j\rangle^2 \right] \geq 0.
\]
Letting $\barX_{ij} = u_i u_j \langle \sigma_i, \sigma_j \rangle^2$, we have
\[
\left(1-\frac{1}{k}\right)\Vert \sigma^\sT u \Vert_2^2 \geq \left(1-\frac{2}{k}\right) n^2  - \left(1-\frac{1}{k}\right) \langle \sigma, \Delta \sigma\rangle + \left(1-\frac{2}{k}\right) \langle u, \Delta u \rangle + \frac{1}{k} \sum_{i,j=1}^n \left[ \langle \sigma_i, \sigma_j \rangle^2 + \Delta_{ij} \barX_{ij} \right]. 
\]
Recall that $\text{rank}(\sigma \sigma^\sT)  = k$, and $\tr(\sigma \sigma^\sT) = n$. Thus, we get the lower bound
\[
\begin{aligned}
&\sum_{i,j=1}^n \langle \sigma_i, \sigma_j \rangle^2 = \Vert \sigma \sigma^\sT\Vert_F^2  = \sum_{i=1}^k \lambda_i^2(\sigma \sigma^\sT)\ge \frac{1}{k} \left(\sum_{i=1}^k \lambda_i(\sigma \sigma^\sT)\right)^2 = \frac{1}{k}\left(\tr(\sigma \sigma^\sT) \right)^2 =  \frac{n^2}{k}.
\end{aligned}
\]
Also note that $\barX$ is a feasible point of (\ref{SDP}). Therefore,
\[
\begin{aligned}
\left(1-\frac{1}{k}\right)\Vert \sigma^\sT u \Vert_2^2 \geq& \left(1-\frac{2}{k}  + \frac{1}{k^2}\right) n^2  - \left(1-\frac{1}{k}\right) \langle \sigma, \Delta \sigma\rangle + \left(1-\frac{2}{k}\right) \langle u, \Delta u \rangle + \frac{1}{k} \langle \Delta, \barX\rangle\\
 \geq  & \left(1-\frac{1}{k}\right)^2 n^2  - \left(1-\frac{1}{k}\right) \SDP(\Delta) - \left(1-\frac{1}{k}\right) \SDP(-\Delta) \\
 \geq & \left(1-\frac{1}{k}\right)^2 n^2  - 2 \left(1-\frac{1}{k}\right) n \Vert \Delta \Vert_{\op} \\
\end{aligned}
\]
which implies that
\begin{align}\label{eq:clb1}
\liminf_{n \rightarrow \infty} \inf_{\sigma \in \C_{n,k}} \frac{1}{n^2}\Vert \sigma^\sT u \Vert_2^2
 \geq \liminf_{n \rightarrow \infty} (1-\frac{1}{k} - \frac{2}{\lambda} \Vert W_n \Vert_{\op})
 = 1 - \frac{1}{k} - \frac{4}{\lambda}, \quad a.s.
\end{align}
where we used the fact that for a GOE matrix $W_n$, we have $\lim_{n\rightarrow \infty} \Vert W_n \Vert_{\op} = 2$ almost surely \cite{anderson2010introduction}.

\subsubsection*{Part $(b)$}

In part $(a)$ we only used the second order optimality condition. In this part of the proof, we will incorporate the first order optimality condition. Note that as $\lambda < 12 k$, the bound in part $(a)$ is better. So in this part, we only consider the case when $\lambda \geq 12 k$. 

Without loss of generality, let $u = \ones$, the vector with all entries equal to one. Let $\sigma \in \R^{n\times k}$ be a local optimizer of the rank-$k$ non-convex SDP problem. We remark that the cost function is invariant by a right rotation of $\sigma$. We can therefore assume that $\sigma = (v_1, \ldots, v_k)$ where $v_i \in \R^{n}$ and $\langle v_i, v_j \rangle= 0$ for $i\neq j$ (take the SVD decomposition $\sigma = U \Sigma V^\sT$ and consider $\tilde{\sigma} = U \Sigma$). Let $X = \sigma \sigma^\sT$ and $A(\lambda) = (\lambda/n) \cdot \ones \ones^\sT + W_n$. For simplicity, we will sometimes omit the dependence on $\lambda$ and write $A = A(\lambda)$.

We decompose the proof into the following steps. 

\medskip
\noindent {\bf Step 1} \textit{Upper bound on $\langle \ones, v_j \rangle^2/n^2$, for $j=2,\ldots, k$, using the first order optimality condition.}

The first order optimality condition gives $A \sigma  = \ddiag(A \sigma \sigma^\sT) \, \sigma$, which implies that
\[
(A v_i) \circ v_j = (A v_j) \circ v_i,
\]
for any $i \neq j$, where we denoted $u \circ v$ the entry-wise product of $u$ and $v$. Replacing $A$ by its expression gives
\[
\left(\left(\frac{\lambda}{n} \ones \ones^\sT + W_n \right) v_i \right)\circ v_j = \left( \left(\frac{\lambda}{n} \ones \ones^\sT + W_n \right) v_j \right)\circ v_i,
\]
which implies
\[
\langle \ones, v_i\rangle v_j - \langle \ones, v_j \rangle v_i = \frac{n}{\lambda} \left[-\left(W_n v_i \right)\circ v_j + \left(W_n v_j \right)\circ v_i\right].
\]
We take the norm of this expression and, recalling that $\langle v_i, v_j \rangle = 0$, we obtain
\begin{equation}\label{eq:foc0}
\begin{aligned}
\langle \ones, v_i \rangle^2 \Vert v_j \Vert_2^2 + \langle \ones, v_j \rangle^2 \Vert v_i \Vert_2^2 \leq& \frac{n^2}{\lambda^2} \left[\left\Vert( W_n v_i)\circ v_j \right\Vert_2+\left\Vert( W_n v_j)\circ v_i \right\Vert_2 \right]^2. \\
\end{aligned}
\end{equation}
Notice that $\Vert v_j\Vert_\infty \leq 1, \forall j\in [k]$, hence 
\begin{equation}\label{eq:foc1}
\begin{aligned}
\langle \ones, v_i \rangle^2 \Vert v_j \Vert_2^2 + \langle \ones, v_j \rangle^2 \Vert v_i \Vert_2^2 \leq & \frac{n^2}{\lambda^2} \left[\left\Vert W_n v_i \right\Vert_2+\left\Vert W_n v_j \right\Vert_2 \right]^2 \\
\leq &\frac{n^2}{\lambda^2}\left\Vert W_n \right\Vert_{\op}^2 \left[ \left \Vert v_i \right\Vert_2+\left\Vert v_j \right\Vert_2 \right]^2 \\
\leq & \frac{2n^2}{\lambda^2} \left\Vert W_n \right\Vert_{\op}^2 \left(\Vert v_i \Vert_2^2 + \Vert v_j \Vert_2^2\right). 
\end{aligned}
\end{equation}

Without loss of generality, let us assume that $\Vert v_1 \Vert_2 \geq \Vert v_j \Vert_2$ for $j \geq 2$ which implies 
\[
\langle \ones, v_j \rangle^2 \Vert v_1 \Vert_2^2 \leq \frac{4n^2}{\lambda^2} \left\Vert W_n \right\Vert_{\op}^2 \Vert v_1 \Vert_2^2, \quad \text{for } j \geq 2.
\]
We deduce the following upper bound
\begin{align}\label{eq:ub1}
\limsup_{n \rightarrow \infty} \sup_{\sigma \in \Crit_{n,k}} \frac{1}{n^2}\langle \ones, v_j \rangle^2 \leq \frac{16}{\lambda^2}, \quad a.s.
\end{align}
for $j = 2,\ldots, k$, where we use the fact that for a GOE matrix $W_n$, we have $\lim_{n\rightarrow \infty} \Vert W_n \Vert_{\op} = 2$ almost surely.

\medskip
\noindent {\bf Step 2} \textit{Lower bound on $\langle \ones, v_1 \rangle^2/n^2$.} 

We combine equation (\ref{eq:clb1}) and (\ref{eq:ub1}) to get almost surely
\[
\liminf_{n \rightarrow \infty} \inf_{\sigma \in \Crit_{n,k}} \frac{1}{n^2}\langle \ones, v_1 \rangle^2 = \liminf_{n \rightarrow \infty} \inf_{\sigma \in \Crit_{n,k}} \left[ \frac{1}{n^2} \Vert \sigma^\sT \ones \Vert_2^2 - \frac{1}{n^2} \sum_{j=2}^{k} \langle \ones, v_j \rangle^2 \right]
\geq  1 - \frac{1}{k} - \frac{4}{ \lambda} - \frac{16k}{\lambda^2}.
\]
Since we assumed that $\lambda \geq 12 k$ and $k 
\geq 2$, we obtain, almost surely,
\begin{align}\label{eq:lb2}
\liminf_{n \rightarrow \infty} \inf_{\sigma \in \Crit_{n,k}} \frac{1}{n^2}\langle \ones, v_1 \rangle^2 \geq 1 - \frac{1}{k} - \frac{4}{12 k} - \frac{16k}{144 k^2} \geq \frac{1}{4}.
\end{align}
The second inequality above is loose but it is sufficient for our purposes. 

\medskip
\noindent {\bf Step 3} \textit{Upper bound on $\Vert v_a \Vert_2^2$ for $a \in \{ 2,\ldots, k \}$.} 

In Equation (\ref{eq:foc1}), let us take $i = 1$ and $j = a \in \{ 2,\ldots, k \}$, we have
\begin{equation}\label{eq:ub3}
\begin{aligned}
&\langle \ones, v_1 \rangle^2 
 \Vert v_a \Vert_2^2 +  \langle \ones, v_a \rangle^2 \Vert v_1 \Vert_2^2 \leq \frac{2n^2}{\lambda^2} \left\Vert W_n \right\Vert_{\op}^2 \left(\Vert v_1 \Vert_2^2 + \Vert v_a \Vert_2^2\right) \leq  \frac{2 n^3}{\lambda^2} \left\Vert W_n \right\Vert_{\op}^2. 
\end{aligned}
\end{equation}

Combining equation (\ref{eq:ub3}) and (\ref{eq:lb2}) results in the following upper bound for $\lambda \geq 12 k$, 
\begin{align}\label{eq:ub2}
\limsup_{n \rightarrow \infty} \sup_{\sigma \in \Crit_{n,k}}  \frac{1}{n} \Vert v_a \Vert_2^2 \leq \frac{32}{\lambda^2},
\end{align}
holding almost surely for any $a \in \{ 2,\ldots, k \}$.

\medskip
\noindent {\bf Step 4} \textit{Lower bound on $f(\sigma)$.}  

By second order optimality of $\sigma$, for any vectors $\{ \xi_i \}_{i=1}^n$ satisfying $\<\sigma_i, \xi_i\>=0$, we have $\langle \xi, (\Lambda-A) \xi \rangle \geq 0$ where $\xi = [\xi_1,\ldots, \xi_n]^\sT$ and $\Lambda = \ddiag(A \sigma \sigma^\sT)$. Take $\xi_i = e_a - \<\sigma_i,e_a\> \sigma_i$, where $e_a$ is the $a$-th canonical basis vector in $\R^{k}$, $a\in\{2,\dots,k\}$. Noting that $\sigma = (\sigma_1,\ldots, \sigma_n)^\sT = (v_1,\ldots, v_k)$, we have $\<\sigma_i, e_a\>= v_{a,i}$. Therefore, we have
\begin{align}
\<\xi_i,\xi_j\> = 1- v_{a,i}^2-v_{a,j}^2 + X_{ij}v_{a,i}v_{a,j}\,.
\end{align}
Using the second order stationarity condition with this choice of $\xi_i$, we have
\[
\begin{aligned}
0 \leq& \sum_{i,j=1}^n (\Lambda - A)_{ij} \langle \xi_i, \xi_j\rangle \\
=& \sum_{i,j=1}^n (\Lambda - A)_{ij}(1- v_{a,i}^2-v_{a,j}^2 + X_{ij}v_{a,i}v_{a,j})\\
=& \sum_{i=1}^n \Lambda_{ii} (1 - v_{a,i}^2) - \sum_{i,j=1}^n A_{ij}(1- 2 v_{a,i}^2+ X_{ij}v_{a,i}v_{a,j}),
\end{aligned}
\]
which implies
\begin{equation}\label{eq:thm_eq1}
\begin{aligned}
f(\sigma) =& \tr(\Lambda) \geq  \sum_{i=1}^n \Lambda_{ii} v_{a,i}^2 + \sum_{i,j=1}^n A_{ij}(1- 2 v_{a,i}^2+ X_{ij}v_{a,i}v_{a,j}) \\
=& \langle \ones, A \ones \rangle + \sum_{i=1}^n \Lambda_{ii} v_{a,i}^2 - 2 \sum_{i,j=1}^n A_{ij} v_{a,i}^2 + \sum_{i,j=1}^n A_{ij} X_{ij} v_{a,i} v_{a,j}\\
\equiv & \langle \ones, A \ones \rangle + B_1+B_2+B_3.
\end{aligned}
\end{equation}

Consider the first term $B_1$. It is easy to see that the second order stationary condition implies $(\Lambda - A)_{ii} \geq 0$. Thus, we have
\[
\begin{aligned}
B_1 =& \sum_{i=1}^n \Lambda_{ii} v_{a,i}^2 \geq \sum_{i=1}^n A_{ii} v_{a,i}^2 = \sum_{i=1}^n (\lambda/n + W_{n, ii}) v_{a,i}^2 \\
\geq & \sum_{i=1}^n W_{n, ii} v_{a,i}^2 \geq - \max_{i \in [n]} \vert W_{n, ii}\vert \cdot \sum_{i=1}^n v_{a,i}^2 \\
\geq & -\Vert W_n \Vert_{\op} \Vert v_{a}\Vert_2^2.
\end{aligned}
\]

Next consider the second term $B_2$. We have 
\[
\begin{aligned}
\vert B_2 \vert =& 2 \vert \langle \ones, A ( v_{a} \circ v_a ) \rangle \vert =  2 \vert \langle \ones, (\lambda/n \cdot \ones \ones^\sT + W_n) ( v_{a} \circ v_a ) \rangle \vert \\
\leq & 2 \lambda \Vert v_{a} \Vert_2^2 + 2 \vert \langle \ones, W_n (v_a \circ v_a) \rangle \vert \leq 2 \lambda \Vert v_{a} \Vert_2^2 + 2 \sqrt n \Vert W_n \Vert_{\op} \Vert v_a \circ v_a \Vert_2 \\
\leq & 2 \lambda \Vert v_{a} \Vert_2^2 + 2 \sqrt n \Vert W_n \Vert_{\op} \Vert v_a \Vert_2. 
\end{aligned}
\]
where the last inequality is because $\vert v_{a,i} \vert \leq 1$ so that $\Vert v_a \circ v_a \Vert_2\leq \Vert v_a \Vert_2$. 

Finally, consider the last term $B_3$. 
\[
\begin{aligned}
B_3 =& \langle v_a, ((\lambda/n \cdot \ones \ones^\sT + W_n) \circ X) v_a\rangle\\
=& \lambda/n \cdot \langle v_a, X v_a \rangle + \langle v_a, (W_n \circ X) v_a \rangle \\
\geq & \langle v_a, (W_n \circ X) v_a \rangle \geq -\Vert W_n \circ X \Vert_{\op} \Vert v_a \Vert_2^2\\
\geq & - \Vert W_n \Vert_{\op} \Vert v_a \Vert_2^2,
\end{aligned}
\]
where the last inequality used a fact that if $X \in \R^{n \times n}$ is in the elliptope, we have $\Vert W \circ X \Vert_{\op} \leq \Vert W \Vert_{\op}$ for any $W \in \R^{n\times n}$.

Here is the justification of the above fact. For $X$ in the elliptope, we have $X_{ii} = 1$ and $X \succeq 0$. For any $Z$ satisfying $Z \succeq 0$ and $\tr(Z) \leq 1$, $X \circ Z$ also satisfies $X \circ Z \succeq 0$ and $\tr(X \circ Z) \leq 1$. Therefore, using the variational representation of the operator norm, we have
\[
\begin{aligned}
\Vert W \circ X \Vert_{\op} =& \max\left \{ \sup_{Z \succeq 0, \tr(Z) \leq 1} \langle W \circ X, Z \rangle, \sup_{Z \succeq 0, \tr(Z) \leq 1} \langle - W \circ X, Z \rangle \right\}\\
=& \max\left \{ \sup_{Z \succeq 0, \tr(Z) \leq 1} \langle W,  X \circ Z \rangle, \sup_{Z \succeq 0, \tr(Z) \leq 1} \langle - W, X \circ Z \rangle \right\}\\
\leq & \max\left \{ \sup_{Y \succeq 0, \tr(Y) \leq 1} \langle W,  Y \rangle, \sup_{Y \succeq 0, \tr(Y) \leq 1} \langle - W, Y \rangle \right\} = \Vert W \Vert_{\op}.
\end{aligned}
\]

\medskip
\noindent {\bf Step 5} \textit{Finish the proof.}  

Noting that $f(\sigma) = \lambda/n \cdot \Vert \sigma^\sT \ones \Vert_2^2 + \langle \sigma, W_n \sigma\rangle$ and $\langle \ones, A \ones\rangle = n \lambda + \langle \ones, W_n \ones \rangle$, we rewrite Equation (\ref{eq:thm_eq1}) as following
\[
\frac{1}{n^2}\Vert \sigma^\sT \ones \Vert_2^2 \geq 1 - \frac{1}{\lambda n}(\langle \sigma, W_n \sigma\rangle - \langle \ones, W_n \ones \rangle)  + \frac{1}{\lambda n}(B_1 + B_2 + B_3).
\]
Plug in the lower bound of $B_1$, $B_2$, $B_3$, we have almost surely
\[
\begin{aligned}
&\liminf_{n\rightarrow \infty} \inf_{\sigma \in \Crit_{n,k}}\frac{1}{n^2}\Vert \sigma^\sT \ones \Vert_2^2 \\
\geq&\liminf_{n\rightarrow \infty} \inf_{\sigma \in \Crit_{n,k}} \left\{ 1 - \frac{2}{\lambda}\Vert W_n \Vert_{\op}  - \frac{1}{\lambda n}(2 \Vert W_n \Vert_{\op} \Vert v_a \Vert_2^2 + 2\lambda \Vert v_a \Vert_2^2 + 2 \sqrt n \Vert W_n \Vert_{\op} \Vert v_a \Vert_2)\right\}\\
\geq & 1 - \frac{4}{\lambda}  - \frac{1}{\lambda }(2 \times 2 \times \frac{32}{\lambda^2} + 2 \lambda \times \frac{32}{\lambda^2} + 2 \times 2 \times \frac{\sqrt{32}}{\lambda})\\
\geq & 1 - \frac{16}{\lambda}.
\end{aligned}
\]
Here we used Equation (\ref{eq:ub2}), $\lambda \geq 12k \geq 24$, and the fact that for a GOE matrix $W_n$, we have $\lim_{n\rightarrow \infty} \Vert W_n \Vert_{\op} = 2$ almost surely.

\end{proof}

\subsection{Proof of Theorem \ref{thm:SBM1}}\label{sec:proof_sbm}

\begin{proof} The proof is similar to the proof of Theorem \ref{thm:Z2sync1}, where the GOE matrix $W_n$ is replaced by the noise matrix $E$. 

Applying Theorem \ref{thm:apprxconcavepoint} with the matrix $\oA_G(\lambda)$, similar to Equation (\ref{eq:nonvanishing1}), we have
\begin{equation}
\begin{aligned}
\frac{\lambda}{n} \Vert \sigma^\sT u \Vert_2^2 \geq \left(1-\frac{1}{k-1}\right) \SDP\left(\oA_G(\lambda) \right) - \frac{1}{k-1} \SDP\left( -E\right) - \SDP\left(E\right).
\end{aligned}
\end{equation}

According to \cite[Theorem 8]{montanari2016semidefinite}, the gap between the SDPs with the two different noise matrices is bounded with high probability by a function of the average degree $d$
\[
\left\vert \frac{1}{n} \SDP ( \oA_G (\lambda) ) - \frac{1}{n} \SDP ( A(\lambda)) \right\vert < C\frac{\log d}{d^{1/10}} \quad \text{ and } \quad \left\vert \frac{1}{n} \SDP (\pm E) - \frac{1}{n} \SDP \left(\pm W_n\right) \right\vert < C\frac{\log d}{d^{1/10}},
\]
where $A(\lambda) = \lambda/n \cdot u u^\sT + W_n$ corresponds to the $\mZt$ synchronization model and $C = C(\lambda)$ is a function of $\lambda$ bounded for any fixed $\lambda$. 

According to \cite[Theorem 5]{montanari2016semidefinite}, for any $\delta > 0$ and $\lambda > 1$, there exists a function $\Delta (\lambda) >0$ such that with high probability, we have
\begin{align}
 \frac{1}{n}\SDP \left( \pm W_n \right)  \leq 2 + \delta, \quad \text{and} \quad 
 \frac{1}{n}\SDP \left( \frac{\lambda}{n} u u^\sT + W_n \right)  \geq 2 + \Delta(\lambda). 
\end{align}

Combining the above results, we have for any $\delta > 0$, with high probability
\[
\inf_{\sigma \in \Crit_{n,k}} \frac{1}{n^2} \Vert \sigma^\sT u \Vert_2^2 \geq \left(1-\frac{1}{k-1} \right) \frac{\Delta(\lambda)}{\lambda}  - \frac{4 + \delta}{k-1} \cdot \frac{1}{\lambda} - \frac{\delta}{\lambda} - 2 \frac{C(\lambda)}{\lambda} \cdot \frac{\log d}{d^{1/10}}.
\]
For a sufficiently small $\eps > 0$, taking $\delta$ sufficiently small, and taking successively $d$ and $k$ sufficiently large, the above expression will be greater than $\eps$, which concludes the proof.
\end{proof}

\subsection{Proof of Theorem \ref{thm:SOd_grothendieck}}\label{sec:proof_sod}

We decompose the proof into three parts. In the first part, we do the calculation for a general non-convex problem. In the second part, we focus on the non-convex problem (\ref{Ncvx-OC-SDP}). In the third part, we prove a claim we made in the second part.

\begin{proof}

\subsubsection*{Part 1}

First, let's consider a general SDP problem. Given a symmetric matrix $A \in \R^{n\times n}$, symmetric matrices $B_1, B_2, \ldots, B_s \in \R^{n\times n}$ and real numbers $c_1, \ldots, c_s \in \R$, we consider the following SDP:
\begin{equation}\label{SDPGen}
\begin{aligned}
\max_{X \in \R^{n \times n}} & \quad \langle A, X \rangle\\
\text{subject to} & \quad  \langle B_i, X \rangle = c_i, \quad i \in [s], \\
& \quad X \succeq 0.
\end{aligned}
\end{equation}
Let $\vB = [B_1,\ldots, B_s] $ and $\vc = (c_1,\ldots, c_s)$. We denote $\SDP(A, \vB, \vc)$ the maximum of the above SDP problem:
\[
\SDP(A, \vB, \vc) = \max \{ \langle A, X \rangle: X \succeq 0, \langle B_i, X \rangle = c_i, i \in [s] \}. 
\]
We assume $\SDP(A, \vB, \vc) < \infty$. 

For a fixed integer $k$, the Burer-Monteiro approach considers the following non-convex problem:
\begin{equation}\label{OPTkGen}
\begin{aligned}
\text{maximize} & \quad \func( \sigma)=\langle \sigma, A \sigma \rangle\\
\text{subject to} & \quad  \langle \sigma, B_i \sigma \rangle = c_i, \quad i \in [s],
\end{aligned}
\end{equation}
with decision variable $\sigma \in \R^{n\times k}$

Define the manifold $\cM_k^{\vB,\vc} = \{ \sigma \in \R^{n\times k}: \langle \sigma, B_i \sigma \rangle = c_i, i \in [s] \}$. At each point $\sigma \in \cM_k^{\vB,\vc}$, the tangent space is given by $ T_\sigma \cM_k^{\vB,\vc} = \lbrace U \in \R^{n\times k}: \langle U , B_i \sigma \rangle = 0, i \in [s] \rbrace$. We denote $\proj_T (U)$ the projection of $U \in \R^{n \times k}$ onto $T_\sigma \mathcal{M}_k^{\vB,\vc}$:
\[
\proj_T (U) = U - \sum_{i,j = 1}^s M_{ij} \langle B_j \sigma, U \rangle B_i \sigma.
\]
where $M = ((\langle B_i\sigma, B_j \sigma \rangle)_{ij=1}^s)^{-1} \in \R^{s \times s}$. The Riemannian gradient is therefore given by
\[
\tgrad \func (\sigma) = 2 (A - \sum_{i=1}^s \lambda_i B_i ) \sigma
\]
with $\lambda_i = \sum_{j=1}^s M_{ij} \tr(B_j A \sigma\sigma^\sT)$. We will write $\Lambda = \Lambda (\sigma) = \sum_{i=1}^s \lambda_i B_i$. The Riemannian Hessian $\thess \func(\sigma)$ applied on the direction $U \in T_\sigma \cM_k^{\vB,\vc}$ gives
\[
\langle U, \thess \func(\sigma) [U]\rangle = 2 \langle U, (A - \Lambda) U\rangle.
\]
Therefore, according to the definition of the $\eps$-approximate concave point $\sigma \in \mathcal{M}_k^{\vB,\vc}$, we have
\[
\forall Y \in T_\sigma \mathcal{M}_k^{\vB,\vc}, \quad \langle Y, (\Lambda - A) Y \rangle \geq -\frac{1}{2}\eps \langle Y, Y \rangle.
\]

Let $V \in \mathbb R^{n\times n}$ such that $X^* = V V^\sT$ is a solution of the general SDP problem (\ref{SDPGen}), and $G \in \mathbb R^{k\times n}$, $G_{ij} \sim \normal(0, 1/k)$ i.i.d., be a random mapping from $\R^n$ onto $\R^k$. Let $\sigma$ be a local maximizer of the rank-$k$ non-convex SDP (\ref{OPTkGen}), and take $Y = \proj_T (V G^\sT)$, a random projection of $V \in R^{n \times n}$ onto $T_{\sigma}\mathcal{M}_k^{\vB,\vc}$. Due to the definition of the approximate concave point, we have
\[
\E \langle \proj_T (VG^\sT), (\Lambda - A) \proj_T(VG^\sT)\rangle \geq -\frac{\eps}{2} \E \langle \proj_T (VG^\sT), \proj_T (VG^\sT)\rangle 
\]
where the expectation is taken over the random mapping $G$. Expanding the left hand side gives
\begin{equation}\label{eq:coreequation}
\begin{aligned}
0 \leq &\langle V, (\Lambda - A) V \rangle - 2 \E \langle VG^\sT - \proj_T(VG^\sT), (\Lambda - A) VG^\sT \rangle \\
&+ \E \langle VG^\sT - \proj_T(VG^\sT), (\Lambda - A) (VG^\sT - \proj_T(VG^\sT)) \rangle + \frac{\eps}{2} \E \langle \proj_T (VG^\sT), \proj_T (VG^\sT)\rangle .
\end{aligned}
\end{equation}

The second term in the last equation gives
\begin{equation}\label{eq:2rdterm}
\begin{aligned}
&\E \left\langle VG^\sT - \proj_T \left(VG^\sT\right), \left(\Lambda - A\right) VG^\sT \right\rangle \\
=& \E \left\langle \sum_{i,j=1}^s M_{ij}  \left\langle B_j \sigma, VG^\sT \right\rangle B_i \sigma, \left(\Lambda - A\right) VG^\sT\right\rangle\\
=& \sum_{i,j=1}^s M_{ij} \E \left[\left\langle B_i \sigma , \left(\Lambda - A\right) VG^\sT\right\rangle \left\langle B_j \sigma, VG^\sT \right\rangle\right]\\
=& \sum_{i,j=1}^s M_{ij} \E\left[ \left\langle V^\sT \left(\Lambda - A\right) B_i \sigma, G^\sT \right\rangle \left\langle V^\sT B_j \sigma, G^\sT \right\rangle\right]\\
=& \frac{1}{k} \sum_{i,j = 1}^s M_{ij} \left\langle V^\sT \left(\Lambda - A\right) B_i \sigma, V^\sT B_j \sigma \right\rangle \\
=& \frac{1}{k}  \left\langle \left(\Lambda - A\right), V V^\sT  \sum_{i,j = 1}^s M_{ij} B_j \sigma \sigma^\sT B_i \right\rangle. \\
\end{aligned}
\end{equation}

The third term gives
\begin{equation}\label{eq:3rdterm}
\begin{aligned}
&\E \left\langle VG^\sT - \proj_T\left(VG^\sT\right), \left(\Lambda - A\right) \left(VG^\sT - \proj_T\left(VG^\sT\right)\right) \right\rangle \\
=& \E \left\langle \sum_{i,j=1}^s M_{ij} \left\langle B_j \sigma, VG^\sT \right\rangle  B_i \sigma, \left(\Lambda - A\right)  \sum_{k,l=1}^s M_{k l}  \left\langle B_l \sigma, VG^\sT \right\rangle B_k \sigma \right\rangle\\
=& \sum_{ijkl = 1}^s M_{ij} M_{kl} \E \left[\left\langle B_i \sigma , \left(\Lambda - A\right) B_k \sigma \right\rangle \left\langle B_j \sigma, VG^\sT \right\rangle \left\langle B_l \sigma, V G^\sT \right\rangle\right]\\
=&  \frac{1}{k}\sum_{ijkl = 1}^s M_{ij} M_{kl} \left\langle B_i , \left(\Lambda - A\right) B_k \sigma\sigma^\sT \right\rangle \left\langle X^* B_j , B_l \sigma \sigma^\sT\right\rangle.
\end{aligned}
\end{equation}

For the fourth term, we have
\begin{equation}\label{eq:4thterm}
\begin{aligned}
&\E \langle \proj_T (VG^\sT), \proj_T (VG^\sT)\rangle \\
=& \E \Big\Vert  VG -  \sum_{i,j=1}^s M_{ij} \left\langle B_j \sigma, VG^\sT \right\rangle  B_i \sigma \Big\Vert_F^2\\
=& \E \left \langle VG, VG\right \rangle - \E \Big \Vert  \sum_{i,j=1}^s M_{ij} \left\langle B_j \sigma, VG^\sT \right\rangle  B_i \sigma \Big \Vert_F^2\\ 
=& n - \sum_{ij=1}^n \sum_{kl=1}^n M_{ij} M_{kl} \left\langle B_i \sigma, B_k \sigma \right\rangle \E \left[ \left\langle V^\sT B_j \sigma, G \right\rangle \left\langle V^\sT B_l \sigma, G \right\rangle \right]\\
=& n - \frac{1}{k} \sum_{ij=1}^n \sum_{kl=1}^n M_{ij} M_{kl} \left \langle B_i \sigma, B_k \sigma \right\rangle \cdot \left\langle V^\sT B_j \sigma, V^\sT B_l \sigma \right\rangle.
\end{aligned}
\end{equation}

\subsubsection*{Part 2}

Now let's consider the case of the rank-$k$ non-convex Orthogonal-Cut SDP problem (\ref{Ncvx-OC-SDP}). There are $s = d(d+1)/2\times m$ constraints corresponding to the set $\{ (B_i,c_i): i \in [s]\} = \{ (E_{ii}, 1): i \in [n]\} \bigcup \cup_{t=1}^m\{ ((E_{ij} + E_{ji})/\sqrt{2},0): (t-1) d+1 \le i < j \le t d\}$, where $E_{ij} = e_i e_j^\sT$. We will denote $\cM_{o,d,k}$ the optimization manifold:
\[
\cM_{o,d,k} = \lbrace \sigma \in \R^{md \times k} : \sigma = ( \sigma_1 , \ldots , \sigma_m )^\sT , \sigma_i^\sT \sigma_i = \id_d, i\in [m] \rbrace. 
\]
It is straightforward to verify that for any $\sigma \in \cM_{o,d,k}$, we have $(\langle B_i \sigma, B_j \sigma \rangle)_{ij=1}^s = \id_s$. Thus, we have $M = \id_s$. In the following calculation, we write $X = \sigma \sigma^\sT$. Recall that $X^*$ is a global maximizer of problem (\ref{OC-SDP}), and $X^* = V V^\sT$. 

Now, let us calculate each term in Equation (\ref{eq:coreequation}), for the specific problem (\ref{Ncvx-OC-SDP}). For the second term in Equation (\ref{eq:coreequation}), we derived Equation (\ref{eq:2rdterm}). One can check with some calculations that for any $\sigma \in \cM_{o,d,k}$, we have
\[
\sum_{i,j=1}^s M_{ij} B_j \sigma\sigma^\sT B_i = \frac{d+1}{2} \id_n.
\] 
For the fourth term in Equation (\ref{eq:coreequation}), we derived Equation (\ref{eq:4thterm}). Following the calculation in Equation (\ref{eq:4thterm}), we have
\[
\begin{aligned}
&\sum_{ij=1}^n \sum_{kl=1}^n M_{ij} M_{kl} \left \langle B_i \sigma, B_k \sigma \right\rangle \cdot \left\langle V^\sT B_j \sigma, V^\sT B_l \sigma \right\rangle\\
=& \sum_{i j=1}^n  \left \langle B_i \sigma, B_j \sigma \right\rangle\cdot \left\langle V^\sT B_i \sigma, V^\sT B_j \sigma \right\rangle  = \sum_{i = 1}^n   \left\langle V^\sT B_i \sigma, V^\sT B_i \sigma \right\rangle =  \frac{d+1}{2}n. 
\end{aligned}
\]
For the third term in Equation (\ref{eq:coreequation}), we derived Equation (\ref{eq:3rdterm}). Following the calculation in Equation (\ref{eq:3rdterm}), we have
\[
\begin{aligned}
&\sum_{ijkl=1}^s M_{ij}M_{kl}\langle B_i, (\Lambda-A) B_k X \rangle \langle X^* B_j , B_l X \rangle\\
=& \sum_{kl = 1}^s \langle B_k, (\Lambda-A) B_l X \rangle \langle X^* B_k , B_l X \rangle\\
=& \sum_{ij=1}^n (\Lambda-A)_{ij} \sum_{kl=1}^s \langle B_k, E_{ij} B_l X \rangle \langle X^* B_k, B_l X \rangle\\
=& \frac{d+1}{2}\tr\left((\Lambda-A)\barX\right)
\end{aligned}
\]
where we define $\barX = 2/(d+1) \cdot (\sum_{kl=1}^s \langle B_k, E_{ij} B_l X \rangle \langle X^* B_k, B_l X \rangle)_{i,j=1}^n$. Here, we claim that $\barX$ is a feasible point of the Orthogonal-Cut SDP problem (\ref{OC-SDP}). We will prove this claim in part $3$. 

For any feasible point $X$ of the Orthogonal-Cut SDP problem (\ref{OC-SDP}), we have $ \func( \sigma) = \langle \Lambda, X \rangle$. Therefore, from Equation (\ref{eq:coreequation}), we obtain
\[
\begin{aligned}
&f(\sigma) - \SDP_o(A)  = \langle V \Lambda V \rangle - \langle V, A V \rangle \\
\geq & \frac{1}{k}\left( 2 \cdot \frac{d+1}{2}\langle (\Lambda - A), V V^\sT \rangle - \frac{d+1}{2} \langle (\Lambda-A), \barX \rangle \right) - \frac{n}{2}\eps \left(1 - \frac{d+1}{2k}\right)\\
= & \frac{1}{k} \left((d+1)(f(\sigma) - \SDP_o(A)) - \frac{d+1}{2} (f(\sigma) -\langle A, \barX ) \right)- \frac{n}{2}\eps \left(1 - \frac{d+1}{2k}\right)\\
\geq &  \left( 2 \frac{d+1}{2k}(f(\sigma) - \SDP_o(A)) - \frac{d+1}{2k} (f(\sigma) +\SDP_o(-A) ) \right) - \frac{n}{2}\eps \left(1 - \frac{d+1}{2k}\right)
\end{aligned}
\]
Letting $k_d = 2k/(d+1)$, rearranging the above inequality, we have
\[
\left(1-\frac{1}{k_d}\right) \func( \sigma) - \left(1-\frac{2}{k_d}\right)\SDP_o(A) + \frac{1}{k_d}  \SDP_o(-A) \geq - \frac{n}{2}\eps \left(1 - \frac{1}{k_d}\right),
\]
which finally gives the desired inequality
\[
\func( \sigma) \geq \SDP_o(A) - \frac{1}{k_d-1} (\SDP_o(A) + \SDP_o(-A)) - \frac{n}{2}\eps. 
\]

\subsubsection*{Part 3}

Now, let us check that $\barX$ is a feasible point of the Orthogonal-Cut SDP problem (\ref{OC-SDP}). The reason is given by the following Fact $(a)$ and $(b)$. 

\noindent
{\bf Fact $(a)$}. $\barX$ is P.S.D.. Indeed, for any $v \in \R^n$, recall that $X = \sigma \sigma^\sT$ and $X^* = V V^\sT$, we have 
\[
\begin{aligned}
\langle v, \barX v \rangle =& \frac{2}{d+1} \cdot \sum_{kl=1}^s \langle v^\sT B_k \sigma, v^\sT B_l \sigma \rangle \langle V^\sT B_k \sigma, V^\sT B_l \sigma \rangle \\
=& \frac{2}{d+1} \cdot \tr((\langle v^\sT B_k \sigma, v^\sT B_l \sigma \rangle)_{k,l=1}^s  \cdot (\langle V^\sT B_k \sigma, V^\sT B_l \sigma \rangle)_{k,l=1}^s).
\end{aligned}
\]
The matrix $(\langle v^\sT B_k \sigma, v^\sT B_l \sigma \rangle)_{k,l=1}^s = Z^\sT Z \succeq 0$, where $Z = [\text{vec}(v^\sT B_1 \sigma), \ldots, \text{vec}(v^\sT B_s \sigma)]$. Similarly, we have $(\langle V^\sT B_k \sigma, V^\sT B_l \sigma \rangle)_{k,l=1}^s \succeq 0$. Thus, $\langle v, \barX v\rangle \geq 0 $ for any $v \in \R^n$. Then $\barX$ is P.S.D..

\noindent
{\bf  Fact $(b)$} The $(i,i)$'th block of $\barX$ equals $\id_d$. To show this, we assume $d\geq 2$, and due to the symmetry, we just need to check $\barX_{11} = 1$ and $\barX_{12} = 0$. We denote $J_{ij} = E_{ii} \delta_{ij} + (E_{ij} + E_{ji})/\sqrt 2 \cdot (1-\delta_{ij})$, and we rewrite $\barX_{ij}$ as
\[
\barX_{ij} = \frac{2}{d+1} \cdot \sum_{a = 1}^m \sum_{(k,s,l,t) \in \Gamma_a}\langle E_{ij},  J_{ks} X J_{lt} \rangle \langle X^*,  J_{ks} X  J_{lt} \rangle
\]
where $\Gamma_a = \{(k,s,l,t): 1 + (a-1)d\leq k\leq s \leq ad, 1 + (a-1)d \leq l \leq t \leq ad \}$. We have the following series of simplification
\[
\begin{aligned}
\barX_{11} = & \frac{2}{d+1}  \cdot \sum_{a = 1}^m \sum_{(k,s,l,t) \in \Gamma_a}\langle E_{11},  J_{ks} X J_{lt} \rangle \langle X^*,  J_{ks} X  J_{lt} \rangle\\
=& \frac{2}{d+1}  \cdot  \sum_{(k,s,l,t) \in \Gamma_1} \langle E_{11},  J_{ks} X J_{lt} \rangle \langle X^*,  J_{ks} X  J_{lt} \rangle \\
=& \frac{2}{d+1}  \cdot  \sum_{(k,s,l,t) \in \Gamma_1} \langle E_{11},  J_{ks}J_{lt} \rangle \langle J_{ks}, J_{lt} \rangle =  \frac{2}{d+1}  \cdot  \sum_{1 \leq k \leq s \leq d} \langle E_{11},  J_{ks} J_{ks} \rangle \\
=& \frac{2}{d+1}  \cdot  \left(\sum_{k=1}^d \langle E_{11},  E_{kk} E_{kk} \rangle + \sum_{1\leq k < l \leq d} \frac{1}{2} \langle E_{11}, E_{kk} + E_{ll} \rangle \right)\\
=& \frac{2}{d+1}  \cdot \left(1 + \frac{d-1}{2}\right) = 1.
\end{aligned}
\]
The third equality used the fact that $X$ and $X^*$ are feasible point so that their $(i,i)$'th block are $\id_d$. Similarly, we have
\[
\begin{aligned}
\barX_{12} = & \frac{2}{d+1}  \cdot  \sum_{(k,s,l,t) \in \Gamma_1} \langle E_{12},  J_{ks} X J_{lt} \rangle \langle X^*,  J_{ks} X  J_{lt} \rangle \\
=& \frac{2}{d+1}  \cdot  \sum_{(k,s,l,t) \in \Gamma_1} \langle E_{12},  J_{ks}J_{lt} \rangle \langle J_{ks}, J_{lt} \rangle 
= \frac{2}{d+1}  \cdot  \sum_{1 \leq k \leq s \leq d} \langle E_{12},  J_{ks} J_{ks} \rangle = 0.
\end{aligned}
\]
The last equality is because $J_{ks} J_{ks}$ is always a diagonal matrix. 

Therefore, we proved that $\barX$ is a feasible point of the Orthogonal-Cut SDP problem (\ref{OC-SDP}). 
\end{proof}

\subsection{Proof of Theorem \ref{thm:trustregion}}

Given a point $\sigma \in \mathcal{M}_k$ and a tangent vector $u \in T_{\sigma} \mathcal{M}_k$ with $\Vert u \Vert_F = 1$, we denote $\sigma (t) = \proj_{\mathcal{M}_k} (\sigma + t u)$ the update  with searching direction $u$ and step size $t$. The next three lemmas ensure a sufficient increment of the objective function at each step of the RTR algorithm.



\begin{lemma} (Gradient-step) Fix $\mu_G \leq 2 \Vert A \Vert_1$. For any point $\sigma \in \mathcal{M}_k$ such that $\Vert \tgrad  \func( \sigma) \Vert_F \geq \mu_G$, taking searching direction $u = \tgrad  \func( \sigma)  / \Vert \tgrad  \func( \sigma) \Vert_F$ and step size $\eta =  \mu_G/(20 \Vert A \Vert_1)$, we have
\[
\func( \sigma(\eta)) - \func( \sigma) \geq  \frac{\mu_G^2}{40 \Vert A \Vert_1}. 
\]
\label{lem:grad}
\end{lemma}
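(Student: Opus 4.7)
The plan is a standard second-order descent argument, but one has to be careful to measure curvature by $\|A\|_1$ rather than by $\|A\|_{\op}$ or $\|A\|_F$, which is what makes the step-size choice $\eta = \mu_G/(20\|A\|_1)$ the right one.

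First I would write the projection step row-by-row. Since $\sigma_i + \eta u_i \in \reals^k$ has squared norm $1 + \eta^2\|u_i\|_2^2$ (using $\<u_i,\sigma_i\>=0$), the retraction is
\[
\sigma(\eta)_i \;=\; \frac{\sigma_i + \eta u_i}{\sqrt{1+\eta^2\|u_i\|_2^2}}\, ,
\]
so $\sigma(\eta) = \sigma + \eta u + r$, where the row-wise correction $r_i$ satisfies $\|r_i\|_2 = 1 - 1/\sqrt{1+\eta^2\|u_i\|_2^2} \cdot \sqrt{1+\eta^2\|u_i\|_2^2}$ and a direct bound gives $\|r_i\|_2 \le \eta^2 \|u_i\|_2^2/2$. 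This is the key pointwise estimate that will let me exploit $\|A\|_1$.

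Next I expand
\[
f(\sigma(\eta)) - f(\sigma) \;=\; 2\eta\<u,A\sigma\> + \eta^2\<u,Au\> + 2\<r,A\sigma\> + 2\eta\<u,Ar\> + \<r,Ar\>\, .
\]
For the linear term, I use that $u \in T_\sigma\cM_k$, hence $\<u,\Lambda\sigma\>=\sum_i\Lambda_{ii}\<u_i,\sigma_i\>=0$, giving $2\<u,A\sigma\> = \<u, \tgrad f(\sigma)\> = \|\tgrad f(\sigma)\|_F \ge \mu_G$ by the choice $u = \tgrad f(\sigma)/\|\tgrad f(\sigma)\|_F$. So the linear contribution is $\ge \eta\mu_G$.

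For each remaining term I would bound absolute values using the pointwise estimate above together with $\|\sigma_j\|_2 = 1$, $\|u\|_F=1$, $\sum_i\|u_i\|_2^2=1$, and the fact that $A$ symmetric implies $\|A\|_{\op}\le \|A\|_1$ and $\max_i\sum_j|A_{ij}| \le \|A\|_1$. The key observation is the inequality
\[
|\<r,A\sigma\>| \;\le\; \sum_{i,j}|A_{ij}|\,\|r_i\|_2\,\|\sigma_j\|_2 \;\le\; \frac{\eta^2}{2}\sum_i\|u_i\|_2^2 \sum_j|A_{ij}| \;\le\; \frac{\eta^2}{2}\|A\|_1\, ,
\]
and analogous (cruder) bounds $|\eta^2\<u,Au\>|\le \eta^2\|A\|_1$, $|\eta\<u,Ar\>|\le \eta^3\|A\|_1/2$, $|\<r,Ar\>|\le \eta^4\|A\|_1/4$. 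Combining,
\[
f(\sigma(\eta))-f(\sigma) \;\ge\; \eta\mu_G - \eta^2\|A\|_1\bigl(2+\eta+\tfrac{\eta^2}{4}\bigr)\, .
\]
Since $\mu_G \le 2\|A\|_1$, the chosen $\eta = \mu_G/(20\|A\|_1) \le 1/10$, so the bracket is at most $2.11$. Plugging in gives a lower bound of $\mu_G^2/(20\|A\|_1) - 2.11\,\mu_G^2/(400\|A\|_1) \ge \mu_G^2/(40\|A\|_1)$, as claimed.

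The main thing to watch out for is \emph{not} to bound $|\<r,A\sigma\>|$ by $\|A\|_{\op}\|r\|_F\|\sigma\|_F$, which would introduce a spurious $\sqrt n$ factor; the decisive step is to go entrywise and absorb $\sum_j|A_{ij}|\le\|A\|_1$ before summing over $i$. Everything else is a routine bookkeeping of constants.
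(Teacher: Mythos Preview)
Your argument is correct and follows the same underlying strategy as the paper: control the second-order error by row-wise estimates that bring in $\|A\|_1$ rather than $\|A\|_{\op}$ or $\|A\|_F$. The paper packages this slightly differently: it Taylor-expands the one-variable map $t\mapsto f(\sigma(t))$ and invokes a separate appendix lemma bounding $\sup_{\xi\in[0,t]}|(f\circ\sigma)''(\xi)|\le \|A\|_1(4+8t+8t^2)$, which for $t\le 1$ yields $f(\sigma(t))-f(\sigma)\ge \|\tgrad f(\sigma)\|_F\,t - 10\|A\|_1 t^2$ and then the stated bound. Your direct algebraic expansion $\sigma(\eta)=\sigma+\eta u+r$ with $\|r_i\|_2\le \eta^2\|u_i\|_2^2/2$ reaches the same conclusion (with marginally sharper constants) without the auxiliary lemma; the paper's route has the advantage that the second-derivative bound is reused for the eigen-step lemmas. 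One cosmetic point: your displayed formula for $\|r_i\|_2$ is garbled (as written it reads $1-1=0$); the intended identity is $\|r_i\|_2=\sqrt{1+\eta^2\|u_i\|_2^2}-1\le \eta^2\|u_i\|_2^2/2$.
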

\begin{proof}
The second order expansion of $\func( \sigma(t))$ around $0$ with $t \leq 1$ gives 
\[
\begin{aligned}
\quad \func( \sigma(t)) - \func( \sigma) & \geq (\func\circ  \sigma)'(0) t - \sup_{\xi \in [0,t]}\frac{1}{2}(\func\circ  \sigma)''(\xi) t^2 \\
& \geq \langle \tgrad \func( \sigma), u \rangle t - \frac{1}{2}\Vert A \Vert_1 \cdot (4  + 8 t + 8t^2 )\cdot t^2 \\
& \geq  \Vert \tgrad \func( \sigma) \Vert_F t - 10 \Vert A \Vert_1 t^2.
\end{aligned}
\]
The second inequality used the bound on the second order derivative in Lemma \ref{lem:secondderivative} in Appendix \ref{app:derivativesbounds}. Now we take $t  = \mu_G /(20 \Vert A \Vert_1)$. Since $\mu_G \leq 2 \Vert A \Vert_1$, we have $t \leq 1$. Plugging this $t$ into the above equation completes the proof.
\end{proof} 

\begin{lemma} (Eigen-step) For any point $\sigma \in \mathcal{M}_k$, and $u\in T_{\sigma} \mathcal{M}_k$ satisfying $\Vert u \Vert_F = 1$, $\langle u , \tgrad \func( \sigma)\rangle \geq 0$, and $\lambda_H = \lambda_H(\sigma, u) = \thess \func( \sigma)[u,u] >0$, choosing $\eta =  \lambda_H/(100 \Vert A \Vert_1)$, we have
\[
\func( \sigma (\eta)) - \func( \sigma) \geq \frac{\lambda_H^3}{4 \cdot 10^4 \Vert A \Vert_1^2}. 
\]
\label{lem:hess1}
\end{lemma}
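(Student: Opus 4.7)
The plan is a third-order Taylor expansion of the scalar function $g(t) = \func(\sigma(t))$ at $t=0$, exploiting the fact that the quadratic coefficient is the positive quantity $\lambda_H$. By the Lagrange form of the remainder,
\[
g(\eta) - g(0) = g'(0)\,\eta + \tfrac{1}{2}g''(0)\,\eta^2 + \tfrac{1}{6}g'''(\xi)\,\eta^3
\]
for some $\xi \in [0,\eta]$. Since $\langle u, \tgrad \func(\sigma)\rangle \geq 0$, the linear term is non-negative and can be dropped, so it suffices to (i) identify $g''(0)$ with $\lambda_H$, and (ii) obtain a uniform bound of the form $|g'''(\xi)| \leq C\|A\|_1$ on $[0,\eta]$.

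For step (i), I would use the row-wise formula $\sigma_i(t) = (\sigma_i + t u_i)/\sqrt{1 + t^2 \|u_i\|^2}$ (valid because $\langle \sigma_i,u_i\rangle=0$), which gives $\dot\sigma_i(0) = u_i$ and $\ddot\sigma_i(0) = -\|u_i\|^2 \sigma_i$. Combined with $\nabla\func(\sigma) = 2A\sigma$, this yields
\[
g'(0) = 2\langle u, A\sigma\rangle = \langle u, \tgrad\func(\sigma)\rangle, \qquad g''(0) = 2\langle u, Au\rangle - 2\langle \Lambda, uu^\sT\rangle = 2\langle u,(A-\Lambda)u\rangle = \lambda_H,
\]
matching the Hessian formula (\ref{eq:hess_expression}). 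Step (ii) would proceed in the same spirit as Lemma \ref{lem:secondderivative}: differentiate $g(t) = \sum_{ij}A_{ij}\langle \sigma_i(t),\sigma_j(t)\rangle$ once more, and bound each term using $\|\sigma_i(t)\|_2 = 1$, $\sum_i\|u_i\|^2 = \|u\|_F^2 = 1$, and the row-sum estimate $\sum_j|A_{ij}| \leq \|A\|_1$. This should give a bound $|g'''(\xi)|\leq C\|A\|_1$ on $[0,1]$ for some absolute constant $C$.

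With these ingredients in hand, the Taylor expansion yields
\[
\func(\sigma(\eta)) - \func(\sigma) \geq \frac{\lambda_H}{2}\eta^2 - \frac{C\|A\|_1}{6}\eta^3.
\]
Plugging in $\eta = \lambda_H/(100\|A\|_1)$ (which lies in $[0,1]$ since $\lambda_H \leq \|\thess\func(\sigma)\|_{\op}$ is on the order of $\|A\|_1$) produces a positive quadratic contribution of $\lambda_H^3/(2\cdot 10^4\|A\|_1^2)$ and a cubic correction of order $C\lambda_H^3/(6\cdot 10^6\|A\|_1^2)$, which is strictly smaller than half the quadratic gain whenever $C$ is a reasonable absolute constant. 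The remainder is then at least $\lambda_H^3/(4\cdot 10^4\|A\|_1^2)$, as claimed. The only genuine obstacle is carrying out the uniform third-derivative bound in step (ii); since it parallels the second-derivative computation already recorded in Lemma \ref{lem:secondderivative}, no new ideas should be required, only bookkeeping of constants.
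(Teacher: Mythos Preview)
Your proposal is correct and follows essentially the same argument as the paper: third-order Taylor expansion, drop the nonnegative linear term, identify $g''(0)=\lambda_H$, and control the cubic remainder by a uniform bound $|g'''(\xi)|\le C\|A\|_1$ on $[0,1]$. The paper records this last bound explicitly as Lemma~\ref{lem:thirdderivative}, which gives $C=144$ on $[0,1]$ (hence $\frac{1}{2}\lambda_H t^2 - 24\|A\|_1 t^3$), and your check that $\eta\le 1$ via $\lambda_H\le\|\thess f(\sigma)\|_{\op}\le 2\|A\|_1$ matches the paper's as well.
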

\begin{proof}
The third order expansion of $\func( \sigma(t))$ around $0$ for $t \leq 1$ gives 
\[
\begin{aligned}
\func( \sigma(t)) - \func( \sigma) & \geq \langle \tgrad \func( \sigma(0)),u \rangle t + \frac{1}{2}\langle u, \thess \func( \sigma(0)) [u]\rangle t^2 - \frac{1}{6}  \sup_{\xi \in [0,t]} (\func\circ  \sigma)'''(\xi) t^3 \\
& \geq \frac{1}{2} \lambda_H t^2 -  \frac{1}{6} \Vert A \Vert_1
 \cdot (12 + 36t + 48 t^2 + 48 t^3)\cdot t^3\\
& \geq  \frac{1}{2} \lambda_H t^2 - 24 \Vert A \Vert_1 t^3. 
\end{aligned}
\]
The second inequality used the bound on the third order derivative in Lemma \ref{lem:thirdderivative} in Appendix \ref{app:derivativesbounds}. Now we take $t  = \lambda_H/(100 \Vert A \Vert_1)$. Note that we always have $\lambda_H(\sigma, u) \leq \Vert \thess \func( \sigma) \Vert_2 \leq \Vert A - \Lambda \Vert_2 \leq 2 \Vert A \Vert_1$, and therefore we have $t \leq 2 \Vert A \Vert_1 /(100 \Vert A \Vert_1) \leq 1$. Plugging this $t$ into the above equation completes the proof.
\end{proof} 

The last lower bound on the increment of objective function for eigen-step used the loose bound in Lemma \ref{lem:thirdderivative}. Using Lemma \ref{lem:improvedthird}, we can give an improved bound for the eigen-step when the norm of the gradient is small. In particular we take $\mu_G = \Vert A \Vert_2$.

\begin{lemma} (Improved bound for eigen-step)
For any point $\sigma \in \mathcal{M}_k$ with $\Vert \tgrad  \func( \sigma) \Vert_F \leq \mu_G = \Vert A \Vert_2$, and $u\in T_{\sigma} \mathcal{M}_k$ satisfying $\Vert u \Vert_F = 1$, $\langle u , \tgrad \func( \sigma)\rangle \geq 0$ and $\lambda_H = \lambda_H(\sigma, u) = \thess f(\sigma)[u,u]  >0$, choosing $\eta =   \min \left(\sqrt{\lambda_H/(216 \Vert A \Vert_1)},\lambda_H/(12 \Vert A \Vert_2)\right)$, we have
\[
\func( \sigma (\eta)) - \func( \sigma) \geq  \frac{1}{4}\lambda_H \eta^2 = \min \left(\frac{\lambda_H^2}{864 \Vert A \Vert_1}, \frac{\lambda_H^3}{576 \Vert A \Vert_2^2} \right). 
\]
\label{lem:hess2}
\end{lemma}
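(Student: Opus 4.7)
The plan is to mirror the proof of Lemma \ref{lem:hess1}, replacing the crude third-derivative bound of Lemma \ref{lem:thirdderivative} with the sharper one of Lemma \ref{lem:improvedthird}, which (under the assumption $\|\tgrad f(\sigma)\|_F \le \mu_G=\|A\|_{\op}$) should isolate the leading cubic coefficient as $C_1 \|A\|_{\op}$ rather than $C_2 \|A\|_1$, with $\|A\|_1$ appearing only at order $t^4$ or higher.

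First I would write the third-order Taylor expansion of the scalar function $t\mapsto f(\sigma(t))$ at $t=0$:
\[
f(\sigma(t)) - f(\sigma) \;\geq\; \langle \tgrad f(\sigma),u\rangle\, t \;+\; \tfrac12 \lambda_H\, t^2 \;-\; \tfrac{1}{6}\sup_{\xi\in[0,t]}\bigl|(f\circ\sigma)'''(\xi)\bigr|\, t^3.
\]
The hypothesis $\langle u, \tgrad f(\sigma)\rangle\ge 0$ lets me drop the linear term. For the cubic remainder I invoke Lemma \ref{lem:improvedthird}; since $\|\tgrad f(\sigma)\|_F\le\|A\|_{\op}$ and $\|u\|_F=1$, the lemma should yield a bound of the schematic form
\[
\bigl|(f\circ\sigma)'''(\xi)\bigr| \;\leq\; c_1\,\|A\|_{\op} \;+\; c_2\,\|A\|_1\, \xi
\]
on $\xi\in[0,1]$ (plus benign higher-order pieces in $\xi$ that can be absorbed since $\eta\le 1$). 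This gives the working inequality
\[
f(\sigma(\eta)) - f(\sigma) \;\geq\; \tfrac12\lambda_H\, \eta^2 \;-\; c_1'\,\|A\|_{\op}\,\eta^3 \;-\; c_2'\,\|A\|_1\,\eta^4.
\]

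Next I choose $\eta$ to absorb each error term into one quarter of the quadratic gain, i.e.\ to enforce
\[
c_1'\,\|A\|_{\op}\,\eta^3 \;\le\; \tfrac18\lambda_H\,\eta^2
\qquad\text{and}\qquad
c_2'\,\|A\|_1\,\eta^4 \;\le\; \tfrac18\lambda_H\,\eta^2.
\]
The first condition is equivalent to $\eta \le \lambda_H/(8c_1'\|A\|_{\op})$, and the second to $\eta \le \sqrt{\lambda_H/(8c_2'\|A\|_1)}$. Matching the constants in the stated lemma ($12$ and $216$) amounts to identifying $c_1'$ and $c_2'$ precisely; modulo this bookkeeping, taking
\[
\eta \;=\; \min\!\Bigl(\sqrt{\lambda_H/(216\|A\|_1)},\; \lambda_H/(12\|A\|_{\op})\Bigr)
\]
leaves $f(\sigma(\eta))-f(\sigma)\ge \tfrac14\lambda_H\,\eta^2$, which upon substituting the two candidate values of $\eta$ gives the two displayed minima. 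A final sanity check: one needs $\eta\le 1$ for the Taylor expansion to be valid, and this follows since $\lambda_H\le 2\|A\|_1$ (as in Lemma \ref{lem:hess1}) and $\lambda_H\le 2\|A\|_{\op}$.

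The only real obstacle is the sharpened third-derivative estimate of Lemma \ref{lem:improvedthird}: one must track carefully why, when $\|\tgrad f(\sigma)\|_F$ is small, the terms in $(f\circ\sigma)'''$ that couple to $\|A\|_1$ appear only multiplied by an extra factor of $\eta$ (which is what reduces the bottleneck from $\|A\|_1$ to $\|A\|_{\op}$ for the leading cubic). Everything else is a routine Taylor-remainder balancing argument identical in spirit to Lemma \ref{lem:hess1}.
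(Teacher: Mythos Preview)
Your proposal is correct and follows essentially the same approach as the paper: a third-order Taylor expansion, Lemma~\ref{lem:improvedthird} to bound the remainder (yielding the working inequality $f(\sigma(\eta))-f(\sigma)\ge \tfrac12\lambda_H\eta^2 - \tfrac32\|A\|_2\eta^3 - 27\|A\|_1\eta^4$ once $\|\tgrad f(\sigma)\|_F\le\|A\|_2$ is used), and then balancing each of the two error terms against $\tfrac18\lambda_H\eta^2$, which gives exactly the constants $12$ and $216$ you flagged. Your final check that $\eta\le 1$ is also the one the paper relies on.
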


\begin{proof}
The third order expansion of $\func( \sigma(t))$ around $0$ for $t \leq 1$ gives 
\[
\begin{aligned}
\func( \sigma(t)) - \func( \sigma) &\geq \langle \tgrad \func( \sigma(0)),u \rangle t + \frac{1}{2}\langle u, \thess \func( \sigma(0)) [u]\rangle t^2 - \frac{1}{6}  \sup_{\xi \in [0,t]} (\func\circ  \sigma)'''(\xi) t^3 \\
& \geq \frac{1}{2} \lambda_H t^2 -  \frac{1}{6} (6\Vert A \Vert_2 + 3 \Vert \tgrad \func( \sigma(0))\Vert_F ) \cdot t^3 - \frac{1}{6} \Vert A \Vert_1 \cdot (42  + 72 t + 48 t^2) \cdot t^4 \\
& \geq  \frac{1}{2} \lambda_H t^2 - \frac{3}{2} \Vert A \Vert_2 t^3 - 27 \Vert A \Vert_1 t^4.
\end{aligned}
\]
The first inequality used the improved bound on the third order derivative of Lemma \ref{lem:improvedthird} in Appendix \ref{app:derivativesbounds},
which imply in particular $\|\tgrad f(\sigma)\|_F\le \|A\|_2$. 
Taking $t  = \min \left(\sqrt{\lambda_H/(216 \Vert A \Vert_1)},\lambda_H/(12 \Vert A \Vert_2)\right)<1$ completes the proof.
\end{proof}

We are now at a good position to prove Theorem \ref{thm:trustregion}.

\begin{proof} 
Denote $\func^* = \SDP (A) -1/(k-1) \cdot (\SDP(A) + \SDP(-A))$ and $g(\sigma ) = \func^* - \func( \sigma)$. Let $T$ be the number of iterations and $\lbrace\sigma^0 , \sigma^1 ,\ldots , \sigma^T\rbrace \subset \mathcal{M}_k$ the iterates returned by our RTR algorithm from an arbitrary initialization $\sigma^0 \in \mathcal{M}_k$. We are only interested in the convergence rate as $g(\sigma) > 0$, namely the convergence rate below the gap. Since our algorithm is an ascent algorithm, without loss of generality, we assume $g(\sigma^0), \ldots, g(\sigma^T) > 0$ (otherwise the theorem will hold automatically). 

At each point $\sigma \in \mathcal{M}_k$, Theorem \ref{thm:apprxconcavepoint} gives the following lower bound on the highest curvature
\[
\lambda_{H,\max}(\sigma) = \sup_{u\in T_\sigma \mathcal{M}_k} \frac{\langle u , \thess  \func( \sigma)  [u]\rangle}{\langle u,u \rangle} \geq 2\frac{g(\sigma)}{n} > 0.
\]
We will use this information to bound the algorithm's convergence rate. 

{\bf Case 1. }
First, we consider the case when all the RTR steps are eigen-steps. In each iteration, the algorithm constructs an update direction $u^t$ with curvature $\lambda_H(\sigma^t, u^t) \geq \lambda_{H,\max}(\sigma^t)/2$. According to Lemma \ref{lem:hess2}, we have
\[
g(\sigma^{t}) - g(\sigma^{t+1}) \geq \frac{\lambda_H^3(\sigma^t)}{32\cdot 10^4 \Vert A \Vert^2_1} \geq \frac{g(\sigma^t)^3}{32 \cdot 10^4 \Vert A \Vert^2_1 n^3},
\]
which implies $g(\sigma^{t+1}) \leq g(\sigma^t)$. Thus, we have 
\[
\frac{1}{g(\sigma^{t+1})^2} - \frac{1}{g(\sigma^t)^2} \geq \left(\frac{g(\sigma^{t})^2}{g(\sigma^{t+1})^2} + \frac{g(\sigma^{t})}{g(\sigma^{t+1})} \right)\cdot  \frac{1}{32 \cdot 10^4 \Vert A \Vert_1^2 n} 
 \geq \frac{1}{16 \cdot 10^4 \Vert A \Vert_1^2 n^3}. 
\]
Summing over $t = 0,\ldots, T-1$, we have
\[
\frac{1}{g(\sigma^T)^2} - \frac{1}{g(\sigma^0)^2} = \sum_{0 \leq t \leq T-1} \frac{1}{g(\sigma^{t+1})^2} - \frac{1}{g(\sigma^t)^2} \geq \frac{1}{16 \cdot 10^4 \Vert A \Vert_1^2 n^3} T
\]
Therefore, we obtain the convergence rate $g(\sigma^T) \leq 400 \Vert A \Vert_1 n \sqrt{n/T}$. This implies that 
\[
\func( \sigma^T) \geq \SDP(A) - \frac{1}{k-1} (\SDP(A) + \SDP(-A)) - \frac{n}{2} \eps
\]
as soon as $T \geq 64 \cdot 10^4 \cdot n \Vert A \Vert_1^2/\eps^2$.

{\bf Case 2. }
Then, we consider the case where we set $\mu_G = \Vert A \Vert_2$, and we use the gradient step as $\Vert \tgrad \func( \sigma) \Vert_F > \mu_G$, and use the eigen-step as $\Vert \tgrad \func( \sigma) \Vert_F \leq \mu_G$. First let us bound the number of gradient steps. According to Lemma \ref{lem:grad}, we have 
\[
T_G \frac{\mu_G^2}{40 \Vert A \Vert_1} \leq g(\sigma^0) - g(\sigma^T) \leq \RGD (A).
\]
Hence, we deduce the upper bound $T_G \leq 40 \cdot \Vert A\Vert_1 \RGD(A)/\Vert A \Vert_2^2$. 

Then let us bound the number of eigen-steps. Let us denote $\mathcal I$ and $\mathcal  J \subset \lbrace 0, 1, \ldots, T-1 \rbrace$ the subsets of indices corresponding to eigensteps with respectively $\lambda_H \geq 3\Vert A \Vert_2^2/(2\Vert A \Vert_1)$ and $\lambda_H < 3\Vert A \Vert_2^2/(2\Vert A \Vert_1)$. According to Lemma \ref{lem:hess2}, we have for all $t\in \mathcal  J$
\[
\frac{1}{g(\sigma^{t+1})} - \frac{1}{g(\sigma^t)} \geq \frac{1}{864 \Vert A \Vert_1 n^2} \frac{g(\sigma^{t})}{g(\sigma^{t+1})} \geq \frac{1}{864 \Vert A \Vert_1 n^2},
\]
whereas for $t \in \mathcal I$
\[
\frac{1}{g(\sigma^{t+1})^2} - \frac{1}{g(\sigma^t)^2} \geq \frac{1}{576 \Vert A \Vert_2^2 n^3} \left(\frac{g(\sigma^{t})}{g(\sigma^{t+1})} + \frac{g(\sigma^{t})^2}{g(\sigma^{t+1})^2}\right) \geq \frac{1}{288 \Vert A \Vert_2^2 n^3}.
\]
Summing the contributions of the above two equations gives the convergence rate
\[
g(\sigma^T) \leq c \cdot \max \left(\Vert A \Vert_1 \frac{n^2}{T}, \Vert A \Vert_2 n \sqrt{\frac{n}{T}}\right)
\]
for a universal constant $c$. This guarantees that $g(\sigma^T) \leq n\eps/2$ as soon as $T \geq \tilde c \cdot n \max \left( \Vert A \Vert_2^2/\eps^2, \Vert A \Vert_1/\eps \right)$ for some universal constant $\tilde c$. 

\end{proof}

\section*{Acknowledgements}

A.M. was partially supported by the NSF grant CCF-1319979. S.M. was supported by Office of Technology Licensing Stanford Graduate Fellowship. 

\bibliographystyle{amsalpha}

\begin{thebibliography}{ANKKS{\etalchar{+}}12}

\bibitem[ABG07]{absil2007trust}
P-A Absil, Christopher~G Baker, and Kyle~A Gallivan, \emph{{Trust-region
  methods on Riemannian manifolds}}, Foundations of Computational Mathematics
  \textbf{7} (2007), no.~3, 303--330.

\bibitem[ABH16]{abbe2016exact}
Emmanuel Abbe, Afonso~S Bandeira, and Georgina Hall, \emph{Exact recovery in
  the stochastic block model}, IEEE Transactions on Information Theory
  \textbf{62} (2016), no.~1, 471--487.

\bibitem[AGZ10]{anderson2010introduction}
Greg~W Anderson, Alice Guionnet, and Ofer Zeitouni, \emph{An introduction to
  random matrices}, vol. 118, Cambridge university press, 2010.

\bibitem[AHK05]{arora2005fast}
Sanjeev Arora, Elad Hazan, and Satyen Kale, \emph{Fast algorithms for
  approximate semidefinite programming using the multiplicative weights update
  method}, Foundations of Computer Science, 2005. FOCS 2005. 46th Annual IEEE
  Symposium on, IEEE, 2005, pp.~339--348.

\bibitem[AHK12]{arora2012multiplicative}
\bysame, \emph{The multiplicative weights update method: a meta-algorithm and
  applications.}, Theory of Computing \textbf{8} (2012), no.~1, 121--164.

\bibitem[AK07]{arora2007combinatorial}
Sanjeev Arora and Satyen Kale, \emph{A combinatorial, primal-dual approach to
  semidefinite programs}, Proceedings of the thirty-ninth annual ACM symposium
  on Theory of computing, ACM, 2007, pp.~227--236.

\bibitem[ANKKS{\etalchar{+}}12]{arie2012global}
Mica Arie-Nachimson, Shahar~Z Kovalsky, Ira Kemelmacher-Shlizerman, Amit
  Singer, and Ronen Basri, \emph{Global motion estimation from point matches},
  3D Imaging, Modeling, Processing, Visualization and Transmission (3DIMPVT),
  2012 Second International Conference on, IEEE, 2012, pp.~81--88.

\bibitem[BAP{\etalchar{+}}05]{baik2005phase}
Jinho Baik, G{\'e}rard~Ben Arous, Sandrine P{\'e}ch{\'e}, et~al., \emph{Phase
  transition of the largest eigenvalue for nonnull complex sample covariance
  matrices}, The Annals of Probability \textbf{33} (2005), no.~5, 1643--1697.

\bibitem[Bar95]{barvinok1995problems}
Alexander~I. Barvinok, \emph{Problems of distance geometry and convex
  properties of quadratic maps}, Discrete \& Computational Geometry \textbf{13}
  (1995), no.~2, 189--202.

\bibitem[BBV16]{bandeira2016low}
Afonso~S Bandeira, Nicolas Boumal, and Vladislav Voroninski, \emph{On the
  low-rank approach for semidefinite programs arising in synchronization and
  community detection}, arXiv preprint arXiv:1602.04426 (2016).

\bibitem[BCSZ14]{bandeira2014multireference}
Afonso~S Bandeira, Moses Charikar, Amit Singer, and Andy Zhu,
  \emph{Multireference alignment using semidefinite programming}, Proceedings
  of the 5th conference on Innovations in theoretical computer science, ACM,
  2014, pp.~459--470.

\bibitem[BM03]{burer2003nonlinear}
Samuel Burer and Renato~DC Monteiro, \emph{A nonlinear programming algorithm
  for solving semidefinite programs via low-rank factorization}, Mathematical
  Programming \textbf{95} (2003), no.~2, 329--357.

\bibitem[Bou15]{boumal2015Riemannian}
Nicolas Boumal, \emph{{A Riemannian low-rank method for optimization over
  semidefinite matrices with block-diagonal constraints}}, {\sf
  arXiv:1506.00575} (2015).

\bibitem[BVB16]{boumal2016non}
Nicolas Boumal, Vlad Voroninski, and Afonso Bandeira, \emph{The non-convex
  burer-monteiro approach works on smooth semidefinite programs}, Advances in
  Neural Information Processing Systems, 2016, pp.~2757--2765.

\bibitem[DAM15]{deshpande2015asymptotic}
Yash Deshpande, Emmanuel Abbe, and Andrea Montanari, \emph{Asymptotic mutual
  information for the two-groups stochastic block model}, arXiv preprint
  arXiv:1507.08685 (2015).

\bibitem[Fri03]{friedman2003proof}
Joel Friedman, \emph{A proof of alon's second eigenvalue conjecture},
  Proceedings of the thirty-fifth annual ACM symposium on Theory of computing,
  ACM, 2003, pp.~720--724.

\bibitem[GH11]{garber2011approximating}
Dan Garber and Elad Hazan, \emph{Approximating semidefinite programs in
  sublinear time}, Advances in Neural Information Processing Systems, 2011,
  pp.~1080--1088.

\bibitem[Gro96]{grothendieck1996resume}
Alexander Grothendieck, \emph{R{\'e}sum{\'e} de la th{\'e}orie m{\'e}trique des
  produits tensoriels topologiques}, Resenhas do Instituto de Matem{\'a}tica e
  Estat{\'\i}stica da Universidade de S{\~a}o Paulo \textbf{2} (1996), no.~4,
  401--481.

\bibitem[GV16]{guedon2016community}
Olivier Gu{\'e}don and Roman Vershynin, \emph{Community detection in sparse
  networks via grothendieck’s inequality}, Probability Theory and Related
  Fields \textbf{165} (2016), no.~3-4, 1025--1049.

\bibitem[GW95]{goemans1995improved}
Michel~X Goemans and David~P Williamson, \emph{Improved approximation
  algorithms for maximum cut and satisfiability problems using semidefinite
  programming}, Journal of the ACM (JACM) \textbf{42} (1995), no.~6,
  1115--1145.

\bibitem[HWX16]{hajek2016achieving}
Bruce Hajek, Yihong Wu, and Jiaming Xu, \emph{Achieving exact cluster recovery
  threshold via semidefinite programming}, IEEE Transactions on Information
  Theory \textbf{62} (2016), no.~5, 2788--2797.

\bibitem[JMRT16]{javanmard2016phase}
Adel Javanmard, Andrea Montanari, and Federico Ricci-Tersenghi, \emph{Phase
  transitions in semidefinite relaxations}, Proceedings of the National Academy
  of Sciences \textbf{113} (2016), no.~16, E2218--E2223.

\bibitem[Joh01]{johnstone2001distribution}
Iain~M Johnstone, \emph{On the distribution of the largest eigenvalue in
  principal components analysis}, Annals of statistics (2001), 295--327.

\bibitem[KKMO07]{khot2007optimal}
Subhash Khot, Guy Kindler, Elchanan Mossel, and Ryan O'Donnell, \emph{Optimal
  inapproximability results for max-cut and other 2-variable csps?}, SIAM
  Journal on Computing \textbf{37} (2007), no.~1, 319--357.

\bibitem[KM09]{korada2009exact}
Satish~Babu Korada and Nicolas Macris, \emph{Exact solution of the gauge
  symmetric p-spin glass model on a complete graph}, Journal of Statistical
  Physics \textbf{136} (2009), no.~2, 205--230.

\bibitem[KN12]{khot2012grothendieck}
Subhash Khot and Assaf Naor, \emph{Grothendieck-type inequalities in
  combinatorial optimization}, Communications on Pure and Applied Mathematics
  \textbf{65} (2012), no.~7, 992--1035.

\bibitem[KW92]{kuczynski1992estimating}
J~Kuczy{\'n}ski and H~Wo{\'z}niakowski, \emph{Estimating the largest eigenvalue
  by the power and lanczos algorithms with a random start}, SIAM journal on
  matrix analysis and applications \textbf{13} (1992), no.~4, 1094--1122.

\bibitem[Mas14]{massoulie2014community}
Laurent Massouli{\'e}, \emph{Community detection thresholds and the weak
  ramanujan property}, Proceedings of the 46th Annual ACM Symposium on Theory
  of Computing, ACM, 2014, pp.~694--703.

\bibitem[MNS13]{mossel2013proof}
Elchanan Mossel, Joe Neeman, and Allan Sly, \emph{A proof of the block model
  threshold conjecture}, {\sf arXiv:1311.4115} (2013).

\bibitem[MNS15]{mossel2015reconstruction}
\bysame, \emph{Reconstruction and estimation in the planted partition model},
  Probability Theory and Related Fields \textbf{162} (2015), no.~3-4, 431--461.

\bibitem[Mon16]{montanari2016grothendieck}
Andrea Montanari, \emph{{A Grothendieck-type inequality for local maxima}},
  {\sf arXiv:1603.04064} (2016).

\bibitem[MPW16]{moitra2016robust}
Ankur Moitra, William Perry, and Alexander~S Wein, \emph{How robust are
  reconstruction thresholds for community detection?}, Proceedings of the 48th
  Annual ACM SIGACT Symposium on Theory of Computing, ACM, 2016, pp.~828--841.

\bibitem[MS16]{montanari2016semidefinite}
Andrea Montanari and Subhabrata Sen, \emph{Semidefinite programs on sparse
  random graphs and their application to community detection}, Proceedings of
  the 48th Annual ACM SIGACT Symposium on Theory of Computing, ACM, 2016,
  pp.~814--827.

\bibitem[Nes13]{nesterov2013introductory}
Yurii Nesterov, \emph{Introductory lectures on convex optimization: A basic
  course}, vol.~87, Springer Science \&amp; Business Media, 2013.

\bibitem[Pat98]{pataki1998rank}
G{\'a}bor Pataki, \emph{On the rank of extreme matrices in semidefinite
  programs and the multiplicity of optimal eigenvalues}, Mathematics of
  operations research \textbf{23} (1998), no.~2, 339--358.

\bibitem[Sin11]{singer2011angular}
Amit Singer, \emph{Angular synchronization by eigenvectors and semidefinite
  programming}, Applied and computational harmonic analysis \textbf{30} (2011),
  no.~1, 20--36.

\bibitem[SS11]{singer2011three}
Amit Singer and Yoel Shkolnisky, \emph{Three-dimensional structure
  determination from common lines in cryo-em by eigenvectors and semidefinite
  programming}, SIAM journal on imaging sciences \textbf{4} (2011), no.~2,
  543--572.

\bibitem[Ste10]{steurer2010fast}
David Steurer, \emph{Fast sdp algorithms for constraint satisfaction problems},
  Proceedings of the twenty-first annual ACM-SIAM symposium on Discrete
  Algorithms, SIAM, 2010, pp.~684--697.

\end{thebibliography}
\newcommand{\etalchar}[1]{$^{#1}$}
\providecommand{\bysame}{\leavevmode\hbox to3em{\hrulefill}\thinspace}
\providecommand{\MR}{\relax\ifhmode\unskip\space\fi MR }
\providecommand{\MRhref}[2]{%
  \href{http://www.ams.org/mathscinet-getitem?mr=#1}{#2}
}
\providecommand{\href}[2]{#2}

\appendix

\section{Some technical steps}
\label{sec:bounds}

\subsection{Technical lemmas on $(\func\circ  \sigma)(t) $}
\label{app:derivativesbounds}

In this section, we give an upper bound to the second and third derivatives of $(\func\circ \sigma) (t) = \langle \sigma(t), A \sigma(t) \rangle$ (these notations are defined below). These bounds are important in bounding the complexity of the Riemannian trust-region method in solving the non-convex SDP problem. 

Fix a point $\sigma \in \mathcal{M}_k \subset \R^{n\times k}$ on the manifold, and a tangent vector $u  \in T_\sigma \mathcal{M}_k = \{u = [u_1, \ldots, u_i]^\sT \in \R^{n \times k}: u_i \in \R^k, \langle \sigma_i, u_i \rangle = 0, \forall i \in [n] \}$ with $\Vert u \Vert_F = 1$. Let $\sigma(t) = \proj_{\mathcal{M}_k}(\sigma + t u)$ be the orthogonal projection of $\sigma + t u$ onto the manifold $\cM_k$. For a given symmetric matrix $A \in \R^{n \times n}$, let $\func( \sigma) = \langle \sigma, A \sigma\rangle$. We would like to study the derivatives of $(\func\circ  \sigma) (t) = \func( \sigma(t))$ with respect to $t$. Furthermore, we define $u_i(t) = u_i/\sqrt{1 + t^2 \Vert u_i \Vert_2^2}$, $u (t) = [ u_1 (t) ,\ldots , u_n (t)]^\sT$, $D(t) = \diag([\Vert u_1(t) \Vert_2^2,\ldots, \Vert u_n(t) \Vert_2^2 ])$, and $\Lambda(t) = \ddiag(A\sigma(t) \sigma(t)^\sT)$. For convenience, we will denote $\tsigma = \sigma(t)$, $\tu = u(t)$, $\tD = D(t)$, and $\tLambda = \Lambda(t)$. 

\begin{lemma}
For any $\sigma \in \mathcal{M}_k$ and $u \in T_\sigma \mathcal{M}_k$, let $\sigma(t) = \proj_{\mathcal{M}_k}(\sigma + t u)$. We have $\forall t \in \R$
\begin{equation}
\begin{aligned}
\sigma'(t) =& - tD(t) \sigma(t) + u(t), \\
\sigma''(t) =& \left[-D(t) + 3 t^2 D(t)^2 \right] \sigma(t) - 2 t D(t) u(t), \\
\sigma'''(t) = & \left[9 t D(t)^2 - 15 t^3 D(t)^3\right] \sigma(t) + \left[-3D(t) + 9 t^2 D(t)^2\right] u(t). \\
\end{aligned}
\end{equation}
\label{lem:derivatives}
\end{lemma}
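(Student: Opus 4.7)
The plan is to exploit the product structure of $\mathcal{M}_k = (\mathbb{S}^{k-1})^n$. The metric projection $\proj_{\mathcal{M}_k}$ acts row-wise by normalization, so for each $i \in [n]$,
\[
\sigma_i(t) \;=\; \frac{\sigma_i + t u_i}{\|\sigma_i + t u_i\|_2}.
\]
Since $\sigma \in \mathcal{M}_k$ and $u \in T_\sigma \mathcal{M}_k$ imply $\|\sigma_i\|_2 = 1$ and $\langle \sigma_i, u_i\rangle = 0$, the denominator simplifies to $r_i(t) := \sqrt{1 + t^2 \|u_i\|_2^2}$. From the definitions in the statement, this gives $u_i(t) = u_i/r_i(t)$, $\tD_{ii}(t) = \|u_i\|_2^2/r_i(t)^2$, and the useful identity $\sigma_i(t) = \sigma_i/r_i(t) + t u_i(t)$.

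Next I would derive three bookkeeping identities that drive everything:
\begin{equation*}
r_i'(t) \;=\; t\,\tD_{ii}(t)\, r_i(t), \qquad \tD_{ii}'(t) \;=\; -2t\,\tD_{ii}(t)^2, \qquad u_i'(t) \;=\; -t\,\tD_{ii}(t)\, u_i(t).
\end{equation*}
The first comes from differentiating $r_i(t)^2 = 1 + t^2\|u_i\|_2^2$; the second and third are immediate consequences. Observe that each identity only introduces $\sigma_i, u_i(t)$, and powers of $\tD_{ii}$, with no $\sigma_i$-dependence escaping the linear combination structure.

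With these in hand, the first derivative formula follows from differentiating $r_i(t)\sigma_i(t) = \sigma_i + t u_i$: this gives $r_i'(t)\sigma_i(t) + r_i(t)\sigma_i'(t) = u_i$, which rearranges to $\sigma_i'(t) = u_i(t) - t\tD_{ii}(t)\sigma_i(t)$, matching the claim in vector form. The second derivative formula comes from differentiating the first using the three identities above and collecting the coefficients of $\sigma_i(t)$ and $u_i(t)$; the cross term $-t\tD_{ii}'(t)\sigma_i(t) = 2t^2\tD_{ii}(t)^2\sigma_i(t)$ combines with the contribution $-t\tD_{ii}(t)\cdot(-t\tD_{ii}(t)\sigma_i(t)) = t^2\tD_{ii}(t)^2\sigma_i(t)$ from $-t\tD_{ii}(t)\sigma_i'(t)$ to give the stated coefficient $-\tD_{ii}(t) + 3t^2\tD_{ii}(t)^2$ in front of $\sigma_i(t)$, while the $u_i(t)$ terms yield $-2t\tD_{ii}(t)$. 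The third derivative is then obtained by one more round of the same procedure.

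The only real obstacle is the third-derivative computation, which involves tracking six contributions (two from each of the three summands in the formula for $\sigma_i''(t)$) and verifying that all the powers of $t$ and $\tD_{ii}(t)$ assemble into exactly $9t\tD_{ii}^2 - 15 t^3 \tD_{ii}^3$ on $\sigma_i(t)$ and $-3\tD_{ii} + 9 t^2 \tD_{ii}^2$ on $u_i(t)$; this is pure bookkeeping but admits no conceptual shortcut, so it is best done by carefully expanding each term using the three identities above and then collecting by basis vectors $\{\sigma_i(t), u_i(t)\}$. Assembling the row-wise identities into matrix form (using that $\tD$ is diagonal and acts row-wise on $\tsigma$ and $\tu$) completes the proof.
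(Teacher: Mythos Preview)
Your proposal is correct. The paper takes a slightly different route: rather than differentiating successively via the auxiliary identities $r_i'(t)=t\tD_{ii}r_i$, $\tD_{ii}'=-2t\tD_{ii}^2$, $u_i'(t)=-t\tD_{ii}u_i(t)$, it writes out the Taylor expansion of $\sigma_i(t+r)$ in powers of $r$ up to third order---using the binomial series $(1+x)^{-1/2}=1-\tfrac12 x+\tfrac38 x^2-\tfrac{5}{16}x^3+o(x^3)$ applied to $x=(2rt\|u_i\|^2+r^2\|u_i\|^2)/(1+t^2\|u_i\|^2)$---and then reads off $\sigma_i'(t),\sigma_i''(t),\sigma_i'''(t)$ from the coefficients of $r,r^2,r^3$. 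Your approach is arguably cleaner for bookkeeping because the three differential identities are reusable and each derivative is obtained from the previous one by a uniform rule; the paper's approach has the advantage of producing all three derivatives from a single expansion, at the cost of more algebra up front when expanding and regrouping the binomial series. Both lead to exactly the same formulas with comparable effort.
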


\begin{proof}
To calculate the first three derivatives of $\sigma (t)$, we expand each row of $\sigma(t+r)$ up to third order in $r$:
\[
\begin{aligned}
\sigma_i(t+r) = & \frac{\sigma_i + t u_i + r u_i}{\sqrt{1 + (t+r)^2 \Vert u_i \Vert_2^2 }} \\
=& \frac{\sigma_i + tu_i + ru_i}{\sqrt{1 + t^2 \Vert u_i \Vert_2^2} } \cdot \left( 1- \frac{1}{2} \cdot \frac{2rt \Vert u_i \Vert_2^2 + r^2 \Vert u_i \Vert_2^2}{1 + t^2 \Vert u_i \Vert_2^2} +\frac{3}{8} \left(\frac{2rt \Vert u_i \Vert_2^2 + r^2 \Vert u_i \Vert_2^2}{1 + t^2 \Vert u_i \Vert_2^2}\right)^2 \right. \\
&\left. - \frac{5}{16} \left(\frac{2rt \Vert u_i \Vert_2^2 + r^2 \Vert u_i \Vert_2^2}{1 + t^2 \Vert u_i \Vert_2^2}\right)^3 + o(r^3)\right)\\
= &  \sigma_i(t) + \left\lbrace \left[- t \Vert u_i(t) \Vert_2^2\right] \sigma_i(t) + u_i(t) \right\rbrace r \\
& + \left\lbrace \left[-\frac{1}{2}\Vert u_i(t) \Vert_2^2 + \frac{3}{2} t^2 \Vert u_i(t)\Vert_2^4\right] \sigma_i(t) + \left[- t \Vert u_i(t)\Vert_2^2\right] u_i(t) \right\rbrace r^2 \\
& + \left\lbrace \left[\frac{3}{2} t \Vert u_i(t)\Vert_2^4 - \frac{5}{2} t^3 \Vert u_i(t) \Vert_2^6\right] \sigma_i(t) + \left[-\frac{1}{2}\Vert u_i(t) \Vert_2^2 + \frac{3}{2} t^2 \Vert u_i(t)\Vert_2^4\right] u_i(t) \right\rbrace r^3 + o(r^3).
\end{aligned}
\]
By matching each expansion coefficient to the corresponding derivative, we obtain the desired result.
\end{proof}

\begin{lemma}
For $(\func\circ  \sigma)(t)$ as defined above
\begin{align}
\sup_{\xi \in [0,t]} \vert (\func\circ  \sigma )'' (\xi) \vert \leq \Vert A \Vert_1 \cdot  (4 + 8 t  + 8t^2), \quad \forall t \geq 0. 
\end{align}
\label{lem:secondderivative}
\end{lemma}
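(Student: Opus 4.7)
Proof plan: Differentiating the quadratic form $\func\circ\sigma$ twice along the curve $t\mapsto\sigma(t)$ and using the symmetry of $A$ gives
$$(\func\circ\sigma)''(t) = 2\langle \sigma''(t), A\sigma(t)\rangle + 2\langle \sigma'(t), A\sigma'(t)\rangle,$$
so the task reduces to bounding two inner products of matrix-valued curves. Substituting the explicit formulas for $\sigma'(t)$ and $\sigma''(t)$ from Lemma \ref{lem:derivatives}, every term is of the form $\langle X, AY\rangle$ where each row $X_i$ (resp.\ $Y_j$) is a scalar multiple of $\sigma_i(t)$ (which has norm $1$) or of $u_i(t)$ (which has norm $\|u_i(t)\|_2\le\|u_i\|_2$).

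The key technical ingredient is a row-wise estimate: identifying rows as vectors in $\R^k$, the identity $\langle X, AY\rangle=\sum_{i,j}A_{ij}\langle X_i,Y_j\rangle$ followed by Cauchy--Schwarz gives
$$|\langle X, AY\rangle|\;\le\;\sum_{i,j}|A_{ij}|\,\|X_i\|_2\|Y_j\|_2.$$
Two specializations will be used. First, when every $\|Y_j\|_2\le 1$ (as for $Y=\sigma(t)$), summing $|A_{ij}|$ over $j$ row by row and invoking $\|A\|_\infty=\|A\|_1$ (valid since $A$ is symmetric) yields $|\langle X, AY\rangle|\le\|A\|_1\sum_i\|X_i\|_2$. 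Second, when $X=Y$, setting $a_i=\|X_i\|_2$ the bound becomes $a^{\sT}|A|a\le\||A|\|_{\op}\|a\|_2^2\le\|A\|_1\|X\|_F^2$, again using symmetry of $|A|$.

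Applying the second specialization to $\sigma'(t)$ and noting that $\|\sigma_i'(t)\|_2^2=\|u_i(t)\|_2^2(1-t^2\|u_i(t)\|_2^2)\le\|u_i\|_2^2$, so $\|\sigma'(t)\|_F\le 1$, yields $|\langle \sigma'(t),A\sigma'(t)\rangle|\le\|A\|_1$. Applying the first specialization to $X=\sigma''(t)$ and $Y=\sigma(t)$ reduces the problem to bounding $\sum_i\|\sigma_i''(t)\|_2$; from the triangle inequality and the formula $\sigma_i''(t)=(-\|u_i(t)\|_2^2+3t^2\|u_i(t)\|_2^4)\sigma_i(t)-2t\|u_i(t)\|_2^2\,u_i(t)$, together with $\|u_i(t)\|_2\le\|u_i\|_2\le 1$ and $\sum_i\|u_i\|_2^2=1$, this sum is bounded by $1+2t+3t^2$. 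Combining the two estimates gives $|(\func\circ\sigma)''(t)|\le\|A\|_1(4+4t+6t^2)$, which is monotone increasing in $t$ and therefore dominates the supremum on $[0,t]$ by the claimed quantity $\|A\|_1(4+8t+8t^2)$.

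The argument is essentially bookkeeping and I expect no real obstacle. The only mildly subtle point is recognizing that the correct norm to carry is $\|A\|_1$ rather than a Frobenius-style norm: the rows of $\sigma(t)$ have unit norm so $\|\sigma(t)\|_F=\sqrt{n}$ would introduce a spurious factor of $\sqrt{n}$, whereas treating the row sums of $|A|$ directly and invoking the symmetry identity $\|A\|_\infty=\|A\|_1$ avoids this entirely.
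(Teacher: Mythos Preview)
Your argument is correct and in fact yields the slightly sharper bound $\|A\|_1(4+4t+6t^2)$, which you then relax to the stated inequality. The route differs from the paper's in organization rather than in spirit. The paper substitutes the formulas for $\sigma'(t)$ and $\sigma''(t)$ and expands $(\func\circ\sigma)''(t)$ into five inner products such as $\langle \tilde u,(A-\tilde\Lambda)\tilde u\rangle$, $\langle \tilde u, A\tilde D\tilde\sigma\rangle$, $\langle A\tilde\sigma,\tilde D^2\tilde\sigma\rangle$, etc., each of which is bounded separately by $\|A\|_1$ using a small catalogue of term-specific estimates (Appendix~\ref{app:bounds}). You instead keep the two terms $2\langle\sigma'',A\sigma\rangle$ and $2\langle\sigma',A\sigma'\rangle$ intact and apply a single row-wise Cauchy--Schwarz principle, reducing everything to $\sum_i\|\sigma_i''(t)\|_2$ and $\|\sigma'(t)\|_F^2$. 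Your approach is more self-contained and avoids introducing $\tilde\Lambda$; the paper's decomposition, on the other hand, produces the same building blocks that are reused for the third-derivative bounds in Lemmas~\ref{lem:thirdderivative} and~\ref{lem:improvedthird}, so its term-by-term catalogue amortizes across those proofs.
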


\begin{proof} We explicitly calculate the second derivative
\[
\begin{aligned}
(\func\circ  \sigma)''(t) =& \langle \sigma'(t), \nabla^2 \func( \sigma(t)) [ \sigma'(t)] \rangle + \langle \nabla \func( \sigma(t)), \sigma''(t) \rangle\\ 
=& \langle \sigma'(t), 2 A  \sigma'(t) \rangle + \langle 2 A \sigma(t), \sigma''(t) \rangle\\
=& \langle - t \tD \tsigma + \tu, 2 A  [- t\tD \tsigma + \tu] \rangle + \langle 2 A  \tsigma, [-\tD + 3 t^2 \tD^2 ] \tsigma - 2 t \tD \tu \rangle\\
=& 2 \langle \tu, (A - \tLambda) \tu \rangle - 4t [\langle \tu, A \tD \tsigma \rangle + \langle A \tsigma, \tD \tu \rangle] + t^2 [2 \langle \tD\tsigma, A\tD \tsigma \rangle + 6 \langle A \tsigma, \tD^2 \tsigma \rangle ]. 
\end{aligned}
\]
Noticing that $\Vert u (t) \Vert_F \leq \Vert u (0) \Vert_F = 1$, we can use the bounds derived in Appendix \ref{app:bounds} to obtain the following inequality
\[
\begin{aligned}
\left\vert (\func\circ  \sigma )''(t) \right\vert &\leq 2 |\langle \tu, (A - \tLambda) \tu \rangle | + 4t [ |\langle \tu, A \tD \tsigma \rangle| + |\langle A \tsigma, \tD \tu \rangle | ] + t^2 [2 |\langle \tD\tsigma, A\tD \tsigma \rangle | + 6 |\langle A \tsigma, \tD^2 \tsigma \rangle | ]\\
&\leq 4 \Vert A \Vert_1 + 8 t \Vert A \Vert_1 + 8t^2 \Vert A \Vert_1.
\end{aligned}
\]
\end{proof}

\begin{lemma}
For $(\func\circ  \sigma)(t)$ as defined above
\begin{align}
\sup_{\xi \in [0,t]} \vert (\func\circ  \sigma )''' (\xi) \vert \leq \Vert A \Vert_1 \cdot (12 + 36 t + 48 t^2 + 48 t^3),\quad \forall t \geq 0. 
\end{align}
\label{lem:thirdderivative}
\end{lemma}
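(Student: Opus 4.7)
My plan is to compute $(\func \circ \sigma)'''(t)$ by directly differentiating the second-derivative formula derived in the proof of Lemma~\ref{lem:secondderivative}. Starting from
\[
(\func \circ \sigma)''(t) = 2\langle \sigma'(t), A \sigma'(t)\rangle + 2\langle A \sigma(t), \sigma''(t)\rangle
\]
and using the symmetry of $A$ (so that $\langle A \sigma''(t), \sigma'(t)\rangle = \langle A \sigma'(t), \sigma''(t)\rangle$), differentiation yields
\[
(\func \circ \sigma)'''(t) = 6\langle A\sigma'(t), \sigma''(t)\rangle + 2\langle A\sigma(t), \sigma'''(t)\rangle.
\]

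Next I will substitute the closed-form expressions for $\sigma'(t), \sigma''(t), \sigma'''(t)$ from Lemma~\ref{lem:derivatives} and expand. Both inner products become polynomials in $t$ of total degree at most $3$, each coefficient being a small integer linear combination of terms of the form $\langle A X_1, X_2\rangle$, where $X_1, X_2$ are drawn from the finite set $\{\tsigma, \tu, \tD\tsigma, \tD\tu, \tD^2\tsigma, \tD^2\tu, \tD^3\tsigma\}$. Grouping by powers of $t$ gives an expression $\sum_{k=0}^3 t^k R_k$, where each $R_k$ is such a finite linear combination of inner products.

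Then I will bound each such inner product uniformly by a multiple of $\Vert A \Vert_1$, exactly as in the proof of Lemma~\ref{lem:secondderivative} (cf.\ Appendix~\ref{app:bounds}). The structural facts that make these bounds hold uniformly in $t \geq 0$ are: every row of $\tsigma$ has unit $\ell_2$-norm; every row of $\tu$ satisfies $\Vert \tu_i \Vert_2 \leq \Vert u_i \Vert_2$ with $\sum_i \Vert u_i \Vert_2^2 = 1$; and $\tD$ is diagonal with nonnegative entries $\tD_{ii} = \Vert u_i(t) \Vert_2^2$, with $\sum_i \tD_{ii} \leq 1$ and hence $\sum_i \tD_{ii}^p \leq 1$ for every integer $p \geq 1$. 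Combined with $\Vert A \Vert_1$ being the maximum absolute column sum of $A$, the elementary weighted AM-GM bound $\sum_{ij}|A_{ij}|a_i b_j \leq \tfrac{1}{2}\Vert A \Vert_1(\Vert a \Vert_2^2 + \Vert b \Vert_2^2)$ produces a bound $|\langle A X_1, X_2\rangle| \leq c \Vert A \Vert_1$ with an explicit small integer $c$ in each case, uniformly in $\sigma, u, t$. Summing the absolute values of the contributions at each degree then yields the claimed bound $\Vert A \Vert_1 (12 + 36 t + 48 t^2 + 48 t^3)$.

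The main obstacle is purely combinatorial bookkeeping: the expansion of $6\langle A\sigma', \sigma''\rangle + 2\langle A\sigma, \sigma'''\rangle$ produces roughly a dozen distinct inner-product terms, and one must correctly identify the power of $t$ each one carries and sum their contributions to match the exact integer coefficients $12, 36, 48, 48$. No new analytic ideas beyond those already used in the proof of Lemma~\ref{lem:secondderivative} are needed; the higher-degree terms $48 t^2$ and $48 t^3$ come precisely from the $3t^2 \tD^2\tsigma$ and $-15 t^3 \tD^3 \tsigma$ contributions in $\sigma''$ and $\sigma'''$, whose inner products with $A\sigma'$ and $A\sigma$ carry additional powers of $\tD$ but remain bounded by $\Vert A \Vert_1$ thanks to the $\sum_i \tD_{ii}^p \leq 1$ estimate.
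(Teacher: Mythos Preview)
Your proposal is correct and follows essentially the same approach as the paper: both compute $(\func\circ\sigma)'''(t) = 6\langle A\sigma'(t), \sigma''(t)\rangle + 2\langle A\sigma(t), \sigma'''(t)\rangle$, substitute the closed-form expressions from Lemma~\ref{lem:derivatives}, expand into a degree-$3$ polynomial in $t$ whose coefficients are inner products of the form $\langle A X_1, X_2\rangle$, and bound each such term by $\Vert A\Vert_1$ via the elementary estimates of Appendix~\ref{app:bounds}. The only cosmetic difference is that the paper writes the third derivative as $\langle \nabla \func(\sigma(t)), \sigma'''(t)\rangle + 3\langle \sigma'(t), \nabla^2 \func(\sigma(t))[\sigma''(t)]\rangle$ directly and then bounds the individual inner products one by one rather than invoking an AM--GM style inequality, but the content is the same.
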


\begin{proof} We explicitly calculate the third derivative
\[
\begin{aligned}
(\func\circ  \sigma)'''(t) 
=&\langle \nabla \func( \sigma(t)), \sigma'''(t) \rangle + 3 \langle \sigma'(t), \nabla^2 \func( \sigma(t))[\sigma''(t)] \rangle\\
=&\langle \nabla \func( \tsigma), [9 t \tD^2 - 15 t^3 \tD^3] \tsigma + [-3\tD + 9 t^2 \tD^2 ] \tu \rangle \\
&+ 3 \langle- t\tD \tsigma + \tu, \nabla^2 \func( \tsigma) [(-\tD + 3t^2 \tD^2)\tsigma - 2t\tD\tu]  \rangle\\
=& -6 [\langle \tsigma, A \tD \tu \rangle + \langle \tu, A\tD \tsigma \rangle]+ [18 \langle \tsigma, A \tD^2 \tsigma \rangle + 6 \langle \tD \tsigma, A \tD \tsigma \rangle - 12 \langle \tu, A\tD \tu \rangle] t\\
&+ [18 \langle \tsigma, A\tD^2 \tu \rangle + 12 \langle \tD\tsigma, A\tD \tu \rangle + 18 \langle \tu, A \tD^2 \tsigma \rangle]t^2 \\
&+ [-30 \langle \tsigma, A \tD^3 \tsigma \rangle - 18 \langle \tD \tsigma ,A \tD^2 \tsigma \rangle]t^3.
\end{aligned}
\]
The inequality is obtained by upper bounding each term using the bounds derived in Appendix \ref{app:bounds}.
\end{proof}

The above bound on the third derivative of order $\Vert A \Vert_1$ as $t\to 0$. The next lemma proves a bound of order $\Vert A \Vert_2 + \Vert \tgrad \func( \sigma(0))\Vert_F$ as $t\to 0$. If $\Vert \tgrad \func( \sigma(0))\Vert_F$ is small, this improves the above bound. 

\begin{lemma}
For $(\func\circ  \sigma)(t)$ as defined above, an improved bound on its third derivative gives  
\begin{align}
\sup_{\xi \in [0,t]} \vert (\func\circ  \sigma )''' (\xi) \vert \leq 6 \Vert A \Vert_2 + 3 \Vert \tgrad \func( \sigma(0))\Vert_F + \Vert A \Vert_1 \cdot (42 t  + 72 t^2 + 48 t^3), \quad \forall t \geq 0. 
\end{align}
\label{lem:improvedthird}
\end{lemma}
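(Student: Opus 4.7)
The plan is to reuse the explicit expansion of $(\func\circ \sigma)'''(\xi)$ derived in the proof of Lemma \ref{lem:thirdderivative} and to refine only the bound on the piece that survives at $\xi=0$. Writing that expansion as
\[
(\func\circ \sigma)'''(\xi) = T_0(\xi) + \xi\, T_1(\xi) + \xi^2\, T_2(\xi) + \xi^3\, T_3(\xi),
\]
where $T_0(\xi) = -6\bigl[\langle \tsigma, A\tD \tu \rangle + \langle \tu, A\tD \tsigma\rangle\bigr]$, the contributions $T_1, T_2, T_3$ will be bounded exactly as in Lemma \ref{lem:thirdderivative}, producing $\Vert A\Vert_1$ coefficients in $\xi, \xi^2, \xi^3$. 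All the new work goes into controlling $T_0$.

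For $T_0(0)$ I would use two different estimates for the two summands, each exploiting the tangent condition $\langle u_i, \sigma_i\rangle =0$ in a different way. For the first summand, since $A$ is symmetric and $D$ is diagonal,
\[
\langle \sigma, A D u\rangle = \sum_l D_{ll}\, \langle u_l, (A\sigma)_l\rangle,
\]
and the identity $\tgrad \func(\sigma) = 2(A-\Lambda)\sigma$ combined with $\langle u_l, \sigma_l\rangle=0$ turns this into $\tfrac{1}{2}\langle Du, \tgrad \func(\sigma(0))\rangle$, which is at most $\tfrac{1}{2}\Vert \tgrad \func(\sigma(0))\Vert_F$ because $\Vert Du\Vert_F^{\,2} = \sum_i \Vert u_i\Vert_2^6 \le \Vert u\Vert_F^{\,6}=1$. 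For the second summand I would use the operator norm directly: $|\langle u, A D\sigma\rangle| = |\langle A u, D\sigma\rangle| \le \Vert A\Vert_2\,\Vert u\Vert_F\, \Vert D\sigma\Vert_F \le \Vert A\Vert_2$, since $\Vert D\sigma\Vert_F^{\,2} = \sum_i \Vert u_i\Vert_2^{4} \le 1$. Putting the two pieces together gives $|T_0(0)| \le 6\Vert A\Vert_2 + 3\Vert \tgrad \func(\sigma(0))\Vert_F$, which is precisely the $\xi$-independent part of the claimed bound.

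To pass from $T_0(0)$ to $T_0(\xi)$, I would use $T_0(\xi)=T_0(0)+\int_0^\xi T_0'(s)\,ds$ together with the explicit parametrization $\tsigma_i(s) = \alpha_i(s)(\sigma_i + s u_i)$, $\tu_i(s) = \alpha_i(s) u_i$, $\tD_{ii}(s) = \alpha_i(s)^2 \Vert u_i\Vert_2^{\,2}$ for $\alpha_i(s) = (1+s^2\Vert u_i\Vert_2^{\,2})^{-1/2}$. Differentiation gives $|\alpha_i'(s)|\le s \Vert u_i\Vert_2^{\,2}$, so every term appearing in $T_0'(s)$ is of the type handled in Appendix \ref{app:bounds} and can be controlled by $\Vert A\Vert_1$ multiplied by a low-degree polynomial in $s$. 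Integration produces an estimate of the form $|T_0(\xi)-T_0(0)| \le 6\Vert A\Vert_1\,\xi + 24\Vert A\Vert_1\,\xi^2$, which together with the $T_1,T_2,T_3$ contributions reproducing $\Vert A\Vert_1\cdot (36\xi + 48\xi^2 + 48\xi^3)$ from Lemma \ref{lem:thirdderivative} yields exactly $\Vert A\Vert_1\cdot (42\xi + 72\xi^2 + 48\xi^3)$.

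The main obstacle is purely bookkeeping: making sure that, after integrating $T_0'(s)$, the bound's constants fit neatly with those of the previous lemma to give the stated coefficients. Mathematically the new content is contained in the two inequalities $|\langle \sigma, ADu\rangle| \le \tfrac{1}{2}\Vert \tgrad \func(\sigma(0))\Vert_F$ and $|\langle u, AD\sigma\rangle| \le \Vert A\Vert_2$ at $\xi=0$; everything else follows the pattern of Lemma \ref{lem:thirdderivative}.
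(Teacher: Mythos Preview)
Your approach is essentially the paper's: isolate the zero-order piece $-6[\langle\tsigma,A\tD\tu\rangle+\langle\tu,A\tD\tsigma\rangle]$, bound the other three groups by $\Vert A\Vert_1\cdot(36\xi+48\xi^2+48\xi^3)$ as in Lemma~\ref{lem:thirdderivative}, and treat the two summands at $\xi=0$ exactly as you do (the gradient identity for the first, the operator-norm bound for the second). The paper makes the same two observations.

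There is one small discrepancy in how you pass to general $\xi$. You propose to Taylor-expand the whole of $T_0$, i.e.\ both summands, via $T_0(\xi)=T_0(0)+\int_0^\xi T_0'(s)\,ds$. If you carry this out, the $s$-independent part of $T_0'(s)$ is $-12\langle\tu,A\tD\tu\rangle$, which contributes $12\Vert A\Vert_2\,\xi\le 12\Vert A\Vert_1\,\xi$ after integration, not $6\Vert A\Vert_1\,\xi$; combined with the $36\xi$ from $T_1$ you would end at $48\xi$ rather than the stated $42\xi$. The paper avoids this by noticing that your operator-norm bound $|\langle\tu,A\tD\tsigma\rangle|\le\Vert A\Vert_2$ is not special to $\xi=0$: it is exactly bound~2 of Appendix~\ref{app:bounds} and holds for every $\xi$, because $\Vert\tu\Vert_F\le 1$ and $\Vert\tD\tsigma\Vert_F\le 1$ remain true along the curve. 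Hence only the first summand $g(\xi)=\langle\tsigma,A\tD\tu\rangle$ needs a Taylor expansion; the paper computes $g'(\xi)=\langle\tu,A\tD\tu\rangle-\xi[\langle\tD\tsigma,A\tD\tu\rangle+3\langle\tsigma,A\tD^2\tu\rangle]$ and bounds $|g'(\xi)|\le\Vert A\Vert_2+4\xi\Vert A\Vert_1$, which after multiplying by $6$ and using $\Vert A\Vert_2\le\Vert A\Vert_1$ gives precisely the extra $6\xi+24\xi^2$ you need. So your plan is correct modulo this one refinement: do not integrate the second summand, use its uniform $\Vert A\Vert_2$ bound directly.
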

\begin{proof}
From the proof in the previous lemma, we have
\[
\vert (\func\circ  \sigma)''' (t) \vert \leq 6 \left( |\langle \tsigma, A \tD \tu \rangle| + |\langle \tu, A\tD \tsigma \rangle | \right)  +\Vert A \Vert_1 \cdot (36 t + 48 t^2 + 48 t^3)\, .. 
\]

We next bound more carefully $g(t) = \langle \sigma(t), A D(t) u(t) \rangle$. Simple calculation gives us
\[
u'(t) = - t \cdot D(t) u(t) \quad \text{and} \quad D'(t) = -2t \cdot D(t)^2.
\]
Hence,
\[
\begin{aligned}
g'(t) =& \langle \sigma'(t), AD(t) u(t) \rangle + \langle \sigma(t), A D'(t) u(t) \rangle + \langle \sigma(t), A D(t) u'(t)\rangle \\
=& \langle -t \tD \tsigma + \tu, A\tD\tu \rangle + \langle \sigma, A(-2t\tD^2) \tu\rangle + \langle \tsigma, A \tD (-t\tD \tu) \rangle\\
=& \langle \tu, A\tD\tu \rangle + [-\langle \tD\tsigma, A\tD\tu\rangle -3\langle \tsigma,A\tD^2 \tu \rangle] t.
\end{aligned}
\]
According to the bounds in Appendix \ref{app:bounds}, we have
\[
\vert g'(t) \vert \leq \Vert A \Vert_2 + 4 t \Vert A \Vert_1. 
\]
In the meanwhile, we have
\[
\vert g(0)\vert = \vert \langle \sigma, A D u \rangle \vert =  \vert \langle (A - \Lambda)\sigma, D u \rangle  \vert = \vert \langle \frac{1}{2}\tgrad \func( \sigma(0)), Du \rangle \vert \leq  \frac{1}{2} \Vert \tgrad \func( \sigma(0))\Vert_F. 
\]
According to the Taylor expansion of $g(t)$ around $0$ and $t\geq 0$ at first order, we have
\[
\vert g(t) \vert \leq \vert g(0) \vert + t \sup_{\xi \in [0,t]} \vert g'(\xi) \vert \leq \frac{1}{2} \Vert \tgrad \func( \sigma(0))\Vert_F + t\Vert A \Vert_2 + 4 t^2 \Vert A \Vert_1.
\]
Hence the improved bound follows.
\end{proof}

\subsection{All the bounds}
\label{app:bounds}

In this section we give all the bounds used in the proof of Lemma \ref{lem:secondderivative}, \ref{lem:thirdderivative} and \ref{lem:improvedthird}. Let $\sigma \in \cM_k$ and $u \in \R^{n \times k}$ with $\Vert u \Vert_F \leq 1$. Note that here we do not require $u \in T_\sigma \cM_k$. Denote $D = \diag([\Vert u_1\Vert_2^2, \ldots, \Vert u_n \Vert_2^2])$ and $\Lambda = \ddiag (A \sigma \sigma^\sT )$. We have the following bound for each term. 

\begin{multicols}{2}
\begin{enumerate}
\item  
\[
\begin{aligned}
 \vert \langle \sigma, AD u \rangle \vert =& \vert \langle Du, A \sigma \rangle \vert \\
\leq & \max_i \Vert (A \sigma)_i \Vert_2\\
\leq & \Vert A \Vert_1. 
\end{aligned}
\]
\item 
\[
\begin{aligned}
\vert \langle u, AD \sigma \rangle \vert
=& \vert \langle u, \ddiag(A u \sigma^{\sT}) u \rangle \vert\\
\leq& \max (\vert \diag(Au\sigma^{\sT})\vert)\\
=& \max_i \vert \langle \sigma_i, (Au)_i\rangle \vert \\
\leq& \max_i \Vert (Au)_i \Vert_2\\
\leq& \Vert Au \Vert_F \\
\leq& \Vert A\Vert_2 \Vert u \Vert_F \\
\leq & \Vert A \Vert_2. 
\end{aligned}
\]
\item
\[
\begin{aligned}
\vert \langle u, AD u \rangle\vert \leq & \Vert ADu \Vert_F \\
\leq& \Vert A \Vert_2 \Vert Du\Vert_F \\
\leq & \Vert A \Vert_2. 
\end{aligned}
\]
\item
\[
\begin{aligned}
\vert \langle D\sigma, AD \sigma \rangle \vert 
= & \vert \langle u, \ddiag(A D \sigma \sigma^\sT) u \rangle\vert \\
\leq & \max_i(\vert\ddiag(A D \sigma \sigma^\sT)_{ii}\vert) \\
\leq &\max_i \Vert (AD\sigma)_i\Vert_2\\ 
\leq & \Vert AD \Vert_1\\
\leq & \vert A \vert_\infty\\
\leq& \Vert A \Vert_1.
\end{aligned}
\]

\item
\[
\begin{aligned}
\vert \langle A \sigma, D^2 \sigma\rangle\vert
= & \vert \langle u, \ddiag(A \sigma \sigma^{\sT}) D u\rangle \vert \\
\leq & \max_i (\vert \ddiag(A \sigma \sigma^{\sT})_{ii} D_{ii}\vert) \\
\leq & \max_i(\ddiag(A \sigma \sigma^{\sT})_{ii})\\
\leq & \max_i (\langle \sigma_i, (A \sigma)_i \rangle )\\
\leq & \max_i \Vert (A\sigma)_i \Vert_2\\
\leq & \Vert A \Vert_1.
\end{aligned}
\]

\item
\[
\begin{aligned}
\vert \langle u, (A - \Lambda) u \rangle\vert 
\leq & \Vert A - \Lambda \Vert_2 \\
\leq & \Vert A \Vert_2 + \max_{i} \vert \ddiag(A \sigma\sigma^{\sT})_{ii}\vert \\
\leq& \Vert A \Vert_2 + \Vert A \Vert_1\\
\leq & 2 \Vert A \Vert_1. 
\end{aligned}
\]

\item
\[
\begin{aligned}
\vert \langle D \sigma, AD u \rangle \vert = & \vert \langle AD \sigma, Du\rangle\vert \\
\leq & \max_i \Vert (AD\sigma)_i \Vert_2 \\
\leq & \Vert A \Vert_1. 
\end{aligned}
\]

\item
\[
\begin{aligned}
\vert \langle \sigma, AD^2 u \rangle \rangle \vert
\leq & \max_i \Vert (A \sigma)_i \Vert_2 \\
\leq & \Vert A \Vert_1. 
\end{aligned}
\]

\item
\[
\begin{aligned}
\vert \langle u, AD^2 \sigma \rangle\vert
\leq & \max_i \Vert (AD^2 \sigma)_i \Vert_2 \\
\leq & \Vert A D^2 \Vert_1 \\
\leq& \Vert A \Vert_1. 
\end{aligned}
\]

\item
\[
\begin{aligned}
\vert \langle \sigma, AD^3 \sigma \rangle\vert
= & \vert \langle u, \ddiag(A \sigma \sigma^{\sT}) D^2 u \rangle \vert \\
\leq & \max(\ddiag(A \sigma \sigma^{\sT}) D^2) \\
\leq & \max(\ddiag(A \sigma \sigma^{\sT})) \\
\leq & \Vert A \Vert_1. 
\end{aligned}
\]

\item
\[
\begin{aligned}
\vert \langle D \sigma, AD^2 \sigma \rangle\vert
= & \vert \langle u, \ddiag(A D \sigma \sigma^{\sT}) D u\rangle \vert \\
\leq& \max_i (\vert \ddiag(A D \sigma \sigma^{\sT})_{ii} D_{ii}\vert) \\
\leq& \max_i \vert \langle  \sigma_i, (AD \sigma)_i \rangle\vert \\
\leq & \Vert A D \Vert_1 \\
\leq & \Vert A \Vert_1. 
\end{aligned}
\]
\end{enumerate}
\end{multicols}

\end{document}